\newtheorem{theorem}{Theorem}[section]
\newtheorem{lemma}[theorem]{Lemma}
\newtheorem{proposition}[theorem]{Proposition}
\newtheorem{corollary}[theorem]{Corollary}
\theoremstyle{definition}
\newtheorem{definition}[theorem]{Definition}
\newtheorem{example}[theorem]{Example}
\newtheorem{remark}[theorem]{Remark}
\theoremstyle{remark}
\newtheorem*{acknowledgements}{Acknowledgements}
\newcommand{\e}{\varepsilon}
\begin{document}
\title[Classification of $2$-component virtual links up to $\Xi$-moves]{Classification of $2$-component virtual links up to $\Xi$-moves}

\author[Jean-Baptiste Meilhan]{Jean-Baptiste Meilhan}
\address{Univ. Grenoble Alpes, CNRS, IF, 38000 Grenoble, France}
\email{jean-baptiste.meilhan@univ-grenoble-alpes.fr}

\author[Shin Satoh]{Shin Satoh}
\address{Department of Mathematics, Kobe University, Rokkodai-cho 1-1, Nada-ku, Kobe 657-8501, Japan}
\email{shin@math.kobe-u.ac.jp}

\author[Kodai Wada]{Kodai Wada}
\address{Department of Mathematics, Kobe University, Rokkodai-cho 1-1, Nada-ku, Kobe 657-8501, Japan}
\email{wada@math.kobe-u.ac.jp}

\makeatletter
\@namedef{subjclassname@2020}{%
  \textup{2020} Mathematics Subject Classification}
\makeatother
\subjclass[2020]{Primary 57K12; Secondary 57K10}

\keywords{virtual link, $\Xi$-move, odd writhe, linking class, Gauss diagram}




\begin{abstract}
The $\Xi$-move is a local move generated by 
forbidden moves in virtual knot theory.
This move was introduced by Taniguchi and the second author, 
who showed that it characterizes the odd writhe of virtual knots, which is a fundamental invariant defined by Kauffman. In this paper, we extend this result by classifying $2$-component virtual links up to $\Xi$-moves, using refinements of the odd writhe and linking numbers.
\end{abstract}

\maketitle

\section{Introduction}

Virtual knot theory developed by Kauffman in~\cite{K99} is a diagrammatic extension of the classical study of knots in $3$-space.
A \emph{virtual knot} is a generalized knot diagram, where one allows both classical and virtual crossings, regarded up to an extended set of Reidemeister moves. 
Alternatively, virtual knots can be described in terms of \emph{Gauss diagrams}, which are copies of $S^1$ endowed with signed and oriented chords, 
modulo certain deformations \cite{GPV}.
The set-theoretic inclusion of usual knot diagrams into virtual knot diagrams induces an injection of classical knots into virtual knots.

In virtual knot theory, we cannot pass a strand \lq over\rq\, or \lq under\rq\, a virtual crossing. 
These operations are called the {\em forbidden moves}. 
At the Gauss diagram level, forbidden moves allow to exchange the
relative positions of any two consecutive endpoints $P_{1}$ and $P_{2}$ of chords on a
circle. 
See the left of Figure~\ref{fused}, 
where $\e_{i}$ are the signs of chords.  
It is known that any virtual knot is deformed into the trivial knot by forbidden moves \cite{Kan,Nel}. 
Generally, the $\mu$-component virtual links $L=K_1\cup\cdots\cup K_{\mu}$ up to forbidden moves are classified by the $(i,j)$-linking numbers ${\rm Lk}(K_i,K_j)$ $(1\leq i\ne j\leq \mu)$ completely~\cite{ABMW,Nas,Oka}.

\begin{figure}[htbp]
\centering
    \begin{overpic}[width=11cm]{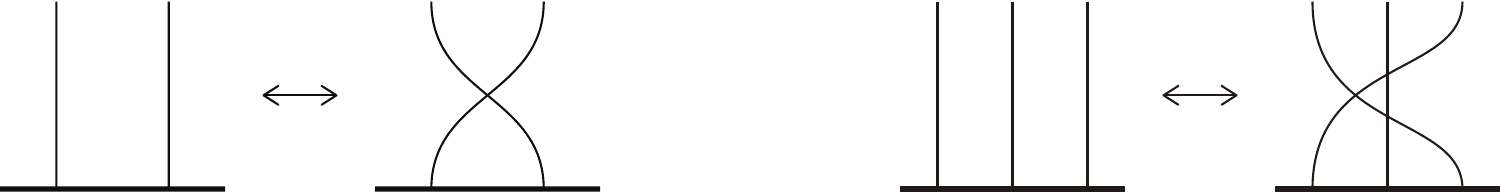}
       \put(30,-28){forbidden move}
       \put(0,31){$\e_{1}$}
       \put(39,31){$\e_{2}$}
       \put(79,31){$\e_{1}$}
       \put(116,31){$\e_{2}$}
       \put(6,-12){$P_{1}$}
       \put(30,-12){$P_{2}$}
       \put(85,-12){$P_{2}$}
       \put(109,-12){$P_{1}$}
       \put(235,-28){$\Xi$-move}
       \put(184,31){$\e_{1}$}
       \put(200,31){$\e_{2}$}
       \put(216,31){$\e_{3}$}
       \put(264,31){$\e_{1}$}
       \put(279,31){$\e_{2}$}
       \put(307,31){$\e_{3}$}
       \put(190,-12){$P_{1}$}
       \put(206,-12){$P_{2}$}
       \put(222,-12){$P_{3}$}
       \put(269,-12){$P_{3}$}
       \put(285,-12){$P_{2}$}
       \put(301,-12){$P_{1}$}
    \end{overpic}
  \vspace{2em}
  \caption{The forbidden moves and $\Xi$-moves on Gauss diagrams} 
  \label{fused}
\end{figure}

The purpose of this paper is to study an operation called the {\it
$\Xi$-move}, which is generated by forbidden moves. 
At the Gauss diagram level, a $\Xi$-move swaps the positions of $P_{1}$ and $P_{3}$ of three consecutive endpoints $P_{1}$, $P_{2}$, and $P_{3}$ of chords. 
See the right of Figure~\ref{fused}. 
The $\Xi$-move arises naturally as a characterization of virtual knots having the same odd writhe. 
Here the \emph{odd writhe $J(K)$} is a fundamental invariant of a virtual knot $K$ in virtual knot theory defined by Kauffman in~\cite{K04}. 
In~\cite{ST}, Taniguchi and the second author proved the following. 

\begin{theorem}[{\cite[Theorem~1.7]{ST}}]
Let $K$ and $K'$ be virtual knots. 
Then the following are equivalent. 
\begin{enumerate}
\item[(i)] $K$ and $K'$ are related by a finite sequence of $\Xi$-moves. 
\item[(ii)] $J(K)=J(K')$. 
\end{enumerate}
\end{theorem}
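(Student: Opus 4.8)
The plan is to prove the two implications separately, working throughout with Gauss diagrams (recall that a forbidden move exchanges two adjacent chord endpoints and a $\Xi$-move exchanges the first and last of three consecutive endpoints, and that $\Xi$-equivalence tacitly includes Reidemeister and virtual moves).

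For (ii) $\Rightarrow$ (i) I would check that $J$ is unchanged by a single $\Xi$-move. The idea is to write such a move — acting on three consecutive endpoints carried by chords $\alpha,\beta,\gamma$ — as the composition of three forbidden moves, the adjacent transpositions $(p_1p_2)$, $(p_2p_3)$, $(p_1p_2)$ of positions. The local fact I would establish is that a forbidden move exchanging adjacent endpoints of two \emph{distinct} chords $\lambda,\lambda'$ reverses precisely whether $\lambda$ and $\lambda'$ cross and affects no other crossing relation (since the two exchanged positions lie on the same side of every other chord), while one exchanging two endpoints of the \emph{same} chord does nothing. Composing the three forbidden moves and tracking $\alpha,\beta,\gamma$, a $\Xi$-move then reverses exactly the three crossing relations inside $\{\alpha,\beta,\gamma\}$ and nothing else — so the number $n_c$ of chords crossing any fixed chord $c$ changes by an even integer, its parity is preserved, the set of odd chords is unchanged, and hence $J(K)=J(K')$; the degenerate cases (two of $\alpha,\beta,\gamma$ equal) give the identity or an obviously parity-preserving move and I would dispatch them directly.

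For (i) $\Rightarrow$ (ii) I would first recall that $J$ is always even (the number of odd chords is even, since $\sum_c n_c$ is). Let $T$ be the virtual trefoil, whose Gauss diagram is two positive crossing chords, so $J(T)=2$, let $\overline{T}$ be its mirror with $J(\overline{T})=-2$, and for $n\ge 0$ set $V_n:=T^{\#n}$ (with $V_0$ the trivial knot) and $V_{-n}:=\overline{T}^{\#n}$, so that $J(V_n)=2n$ for every $n\in\mathbb Z$. The crux is a normal form: \emph{every virtual knot $K$ is $\Xi$-equivalent to $V_{J(K)/2}$}. Granting this, $J(K)=J(K')$ forces $K\sim_\Xi V_{J(K)/2}\sim_\Xi K'$, giving (i) $\Rightarrow$ (ii); combined with the first part it also shows the $V_n$ are pairwise $\Xi$-inequivalent, so $J$ is a complete invariant.

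To prove the normal form I would induct on the number of chords of a Gauss diagram $D$ of $K$, using $J$-preserving reductions: (a) delete a chord crossing no other chord by a Reidemeister~I move; (b) delete, by a Reidemeister~II move, two opposite-sign chords that cross the same chords but not each other and have a consecutive pair of endpoints; (c) when neither applies, use $\Xi$-moves to \emph{disentangle} a cluster of mutually crossing chords — the model being three pairwise crossing chords, where one $\Xi$-move on a suitable triple of consecutive endpoints makes all three non-crossing, so that (a) removes them — until (a) or (b) applies again; iterating brings $D$ to a diagram realizing some $V_n$, and since every move used preserves $J$ we must have $n=J(K)/2$. The hard part will be step (c): showing that $\Xi$-moves always suffice to reach a diagram admitting a type-(a) or type-(b) reduction (or already equal to some $V_n$). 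I expect this to require a genuine combinatorial argument on Gauss diagrams — for instance a complexity function that a suitable block of $\Xi$-moves followed by a type-(a)/(b) reduction always strictly decreases, together with a check that no stable configurations other than the $V_n$ survive. The remaining bookkeeping — the effect of signs, the few-chord base cases, and the degenerate $\Xi$-configurations from the easy direction — should be routine by comparison.
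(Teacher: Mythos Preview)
This theorem is quoted from \cite{ST} without proof in the present paper, so there is no in-paper argument to compare against directly; however, Section~\ref{sec-shell} reproduces (in the link setting) the key machinery from \cite{ST}, so the intended method is visible.

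Your (ii)$\Rightarrow$(i) is correct: writing a $\Xi$-move as three adjacent transpositions and tracking which chord pairs cross shows that each chord's parity, and hence the odd writhe, is preserved. Your (i)$\Rightarrow$(ii) has the right target---a normal form $V_{J(K)/2}$---but a real gap at step~(c): ``disentangle a cluster via some complexity function to be determined'' is precisely where all the content lies, and you have not supplied it; nothing in your outline rules out $\Xi$-rigid configurations other than the $V_n$. The missing idea is the \emph{shell} calculus of Section~\ref{sec-shell}. A shell is a self-chord straddling exactly one other endpoint, and Lemmas~\ref{lem-sliding}--\ref{lem-3shells} show that, up to $\Xi$-equivalence and the creation or absorption of shell-pairs, one can slide shell-pairs freely along the circle, flip a shell's sign, cancel two shell-pairs of opposite sign, pass a chord endpoint past a shell or shell-pair, and remove three shells stacked at one endpoint. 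Iterating Lemma~\ref{lem-exchange} (as in the proof of Proposition~\ref{prop-self-chord}) shortens every chord until it is either free (delete by R1) or a shell; after Lemma~\ref{lem-3shells} and R1 what remains is a disjoint union of same-sign shell-pairs, which is exactly your $V_n$, and invariance of $J$ forces $2n=J(K)$. This concrete terminating local procedure is what your step~(c) needs to become a proof.
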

 
We remark that Ohyama and Yoshikawa~\cite{OY} obtained the same result independently.

In this paper, we push further this study by classifying $2$-component virtual links up to $\Xi$-moves.
The situation turns out to be very different 
depending on the \emph{parity} of a virtual link. 
A $2$-component virtual link is called {\em odd} (resp.~{\em even}) if the number of classical crossings involving both components is odd (resp.~even) (Definition \ref{defparity}). 
Notice that the set of $2$-component even virtual links contains that of classical $2$-component links. 

In the odd case, we have the following. 

\begin{theorem}\label{th-odd}
Let $L=K_{1}\cup K_{2}$ and $L'=K'_{1}\cup K'_{2}$ be $2$-component odd virtual links.
Then the following are equivalent. 
\begin{enumerate}
\item[(i)] $L$ and $L'$ are related by a finite sequence of $\Xi$-moves. 
\item[(ii)] ${\rm Lk}(K_{1},K_{2})={\rm Lk}(K'_{1},K'_{2})$ and ${\rm Lk}(K_{2},K_{1})={\rm Lk}(K'_{2},K'_{1})$. 
\end{enumerate}
\end{theorem}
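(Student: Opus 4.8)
The plan is to prove the two implications separately; the content lies in (ii) $\Rightarrow$ (i).

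For (i) $\Rightarrow$ (ii), I would verify that the two linking numbers $\mathrm{Lk}(K_1,K_2)$ and $\mathrm{Lk}(K_2,K_1)$ are invariant under $\Xi$-moves and generalized Reidemeister moves. On a Gauss diagram, $\mathrm{Lk}(K_i,K_j)$ is the sum of the signs of the chords running from the $K_i$-circle to the $K_j$-circle; a $\Xi$-move only permutes endpoints along a single circle and alters no chord, hence changes neither count, while the Reidemeister moves either involve self-chords only or create or destroy a pair of mixed chords of opposite sign. The same bookkeeping shows that the parity of a $2$-component virtual link, i.e.\ the number of mixed chords modulo $2$, is preserved by $\Xi$-moves and Reidemeister moves; note that this number is congruent to $\mathrm{Lk}(K_1,K_2)+\mathrm{Lk}(K_2,K_1)$ modulo $2$, so oddness is compatible with condition (ii).

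For (ii) $\Rightarrow$ (i), the idea is to compare $\Xi$-equivalence with the coarser equivalence generated by forbidden moves, which by \cite{ABMW,Nas,Oka} is classified precisely by the pair $\bigl(\mathrm{Lk}(K_1,K_2),\mathrm{Lk}(K_2,K_1)\bigr)$; thus condition (ii) says exactly that $L$ and $L'$ are related by a finite sequence of forbidden moves and Reidemeister moves. Since these moves preserve parity, every virtual link appearing in such a sequence is again odd, so it is enough to establish the following Key Lemma: \emph{if $L$ is an odd $2$-component virtual link, then any single forbidden move on a Gauss diagram of $L$ can be realized by a finite sequence of $\Xi$-moves and Reidemeister moves.} Granting this, replacing each forbidden move in the sequence relating $L$ to $L'$ by $\Xi$-moves yields (ii) $\Rightarrow$ (i).

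To prove the Key Lemma, recall that a forbidden move is the transposition of two adjacent endpoints on one component circle $C$, whereas a $\Xi$-move induces the transposition of two endpoints at cyclic distance $2$ on $C$. The crucial observation is a parity count: for an odd link the circle $C$ carries $2s+m$ chord-endpoints, where $s$ is the number of self-chords on $C$ and $m$ the (odd) number of mixed chords, so the total number $N$ of endpoints on $C$ is odd. Since $\gcd(2,N)=1$, the distance-$2$ transpositions of $N$ cyclically ordered points generate the whole symmetric group $S_N$; in particular the adjacent transposition effected by the forbidden move lies in the subgroup generated by the transpositions coming from $\Xi$-moves. (When $N$ is even — which always happens for a virtual knot, and for an even $2$-component link — these transpositions generate only $S_{N/2}\times S_{N/2}$, which is the combinatorial source of the odd writhe obstruction and of the contrast between the odd and even cases.) I expect the main obstacle to be upgrading this group-theoretic statement into an actual sequence of moves on Gauss diagrams: applying a $\Xi$-move requires the three consecutive endpoints to belong to three distinct chords, so one may first need a Reidemeister~II move introducing an auxiliary canceling pair of chords on $C$ — which changes $N$ by $\pm2$ and hence keeps it odd — then perform the permutation, and finally remove the auxiliary pair; verifying that this can always be arranged is the heart of the argument.
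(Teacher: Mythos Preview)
Your approach is correct and is in fact the alternative the paper itself sketches in the Remark immediately following the proof of Theorem~\ref{th-odd}. The paper's primary argument instead uses your Key Lemma (which is exactly Lemma~\ref{lem-odd}) to put any odd Gauss diagram into an explicit normal form $G(a,b)$ with $a=\mathrm{Lk}(K_1,K_2)$, $b=\mathrm{Lk}(K_2,K_1)$, and then observes that equal linking numbers force equal normal forms; you instead cite the forbidden-move classification from \cite{ABMW,Oka} and use the Key Lemma to identify $\Xi$-equivalence with forbidden-move equivalence in the odd case. Both routes rest on the same parity observation and the same lemma; your packaging is slightly shorter since it outsources the normal-form work to the literature, while the paper's is self-contained.

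Two remarks on your Key Lemma. First, your worry that a $\Xi$-move ``requires the three consecutive endpoints to belong to three distinct chords'' is unfounded: by definition a $\Xi$-move may involve both endpoints of a single chord, so the group-theoretic statement that distance-$2$ transpositions generate $S_N$ for $N$ odd translates directly into a sequence of $\Xi$-moves, with no need for auxiliary R2 pairs. Second, the paper's proof of Lemma~\ref{lem-odd} gives an explicit and short recipe realizing your $\gcd(2,N)=1$ argument: repeatedly apply the move of Lemma~\ref{lem-cross-move} (two $\Xi$-moves sliding a pair past two endpoints) to carry $P_1,P_2$ around to sit beside the last endpoint $P_{2n+1}$, then a single $\Xi$-move swaps $P_1$ and $P_2$.
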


We remark that each component of a $2$-component odd virtual link can be unknotted by $\Xi$-moves. 
Furthermore, by the classification of $2$-component virtual links up to forbidden moves~\cite[Corollary 7]{Oka} (see also~\cite[Proposition 3.6]{ABMW}), Theorem~\ref{th-odd} means that the equivalence relation generated by $\Xi$-moves is coincident with that by forbidden moves on the set of $2$-component odd virtual links.

The even case is much less simple, and involves three new invariants. 
For a $2$-component even virtual link  $L=K_{1}\cup K_{2}$, we introduce the odd writhe of the pair $(L,K_{i})$ for $i=1,2$, denoted by $J(L,K_{i})$, 
as an extension of the original invariant defined in~\cite{K04} (Definition~\ref{def-oddwrithe}). 
Furthermore, we define the reduced linking class $\overline{F}(L)$ of $L$, 
which is a refinement of the linking numbers ${\rm Lk}(K_{1},K_{2})$ and ${\rm Lk}(K_{2},K_{1})$ (Definition~\ref{def-linkingclass}). 
Then we have the following. 

\begin{theorem}\label{th-even}
Let $L=K_{1}\cup K_{2}$ and $L'=K'_{1}\cup K'_{2}$ be $2$-component even virtual links. 
Then the following are equivalent. 
\begin{enumerate}
\item[(i)] $L$ and $L'$ are related by a finite sequence of $\Xi$-moves. 
\item[(ii)] $J(L,K_{1})=J(L', K'_{1})$, $J(L, K_{2})=J(L', K'_{2})$, and $\overline{F}(L)=\overline{F}(L')$. 
\end{enumerate}
\end{theorem}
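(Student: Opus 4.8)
The plan is to establish the two implications separately. Since Reidemeister invariance of $J(K_{1};L)$, $J(K_{2};L)$ and $\overline{F}(L)$ is already part of Definitions~\ref{def-oddwrithe} and~\ref{def-linkingclass}, the implication (i)$\Rightarrow$(ii) reduces to checking that these three quantities are unchanged by a single $\Xi$-move. The implication (ii)$\Rightarrow$(i) is the substantive one: I would produce, for every $2$-component even virtual link, a normal form that is reachable from it by $\Xi$-moves and from which the invariants can be read off, so that equal invariants force a common normal form.

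For (i)$\Rightarrow$(ii), I would first record the combinatorial effect of a $\Xi$-move on a Gauss diagram $D$ of $L$. Such a move permutes three endpoints of chords that occur consecutively on one circle of $D$, interchanging the two outer endpoints and fixing the middle one, while every other endpoint stays in place. Consequently it toggles the mod-$2$ intersection number of each of the three pairs formed by the chords $a,b,c$ carrying those endpoints, and changes no other incidence datum of $D$ (in the degenerate cases, where two of the three endpoints lie on a common chord, even less happens). In particular the parity of the number of chords crossing any given chord $x$ is preserved: there is nothing to check when $x\notin\{a,b,c\}$, and when $x\in\{a,b,c\}$ exactly two of its pairwise intersection numbers are toggled. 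Hence the collection of odd self-chords of $K_{i}$, together with their signs, is unchanged, which gives the $\Xi$-invariance of $J(K_{i};L)$. For $\overline{F}(L)$, one checks from Definition~\ref{def-linkingclass} that the three toggles effected by a $\Xi$-move lie among the relations quotiented out in passing from the linking class $F(L)$ to its reduction $\overline{F}(L)$; absorbing exactly these toggles is what the reduction is designed for.

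For (ii)$\Rightarrow$(i), the core is a normal-form statement: every $2$-component even virtual link $L$ is $\Xi$-equivalent to a link $L_{0}$ whose Gauss diagram is built from three standard pieces --- the self-chords on $K_{1}$, the self-chords on $K_{2}$, and the mixed chords --- in which the self-part on each $K_{i}$ is a standard diagram representing $J(K_{i};L)$ in the sense of~\cite[Theorem~1.7]{ST}, and in which the mixed chords, together with their crossing pattern against the self-chords, form the canonical configuration representing $\overline{F}(L)$. I would build this in three stages: (a) use $\Xi$-moves to gather the self-chords of each $K_{i}$ and to cut down their crossings with the mixed chords to a minimum, carefully recording the compensating change that each such simplification forces on a third chord; (b) normalize the self-part on each component by invoking the virtual-knot classification~\cite[Theorem~1.7]{ST}, in the relative form appropriate here; and (c) normalize the mixed chords by $\Xi$-moves so as to realize the canonical representative of $\overline{F}(L)$, which amounts to exhibiting explicit $\Xi$-move sequences that realize the defining relations of $\overline{F}$.

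The main obstacle runs through all of (a)--(c): the interplay between self-chords and mixed chords. A $\Xi$-move never modifies a single intersection in isolation --- it always toggles three --- so eliminating an unwanted crossing between a self-chord and a mixed chord forces a compensating change elsewhere, and one must show that all such side effects can be pushed into the normal form without disturbing the invariants. Organizing this bookkeeping around $\overline{F}(L)$ is where the evenness hypothesis is essential: because the number of mixed chords is even (Definition~\ref{defparity}), the total accumulated defect lands in the kernel of the reduction $F\mapsto\overline{F}$, so the normalization closes up; in the odd case the same defect ranges over the whole group and collapses the refinement down to the pair of linking numbers, which is precisely the content of Theorem~\ref{th-odd}. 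Granting the normal-form statement, (ii)$\Rightarrow$(i) follows at once: if $L$ and $L'$ have equal invariants then their normal forms coincide, $L_{0}=L_{0}'$, whence $L\sim_{\Xi}L_{0}=L_{0}'\sim_{\Xi}L'$.
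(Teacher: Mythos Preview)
Your overall architecture---invariance for (i)$\Rightarrow$(ii), a normal form for (ii)$\Rightarrow$(i)---matches the paper's, but two points are off.

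First, your reasoning for the $\Xi$-invariance of $\overline{F}(L)$ misreads Definition~\ref{def-linkingclass}. The relation $\doteq$ is not designed to absorb the effect of $\Xi$-moves; it merely identifies $((p,q),(r,s))$ with $((q,p),(s,r))$, and its sole purpose is to make the invariant independent of the chosen reference chord $\gamma_{0}$. A $\Xi$-move, once one arranges for $\gamma_{0}$ to lie outside it, leaves each of $\sigma_{ij}(G;\gamma_{0})$ and $\tau_{ij}(G;\gamma_{0})$ literally unchanged (Lemma~\ref{lem-linkingclass}): the equivalence of any nonself-chord with $\gamma_{0}$ is a parity that survives swapping two outer endpoints across a middle one. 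So there are no ``toggles'' to be quotiented out, and the check you propose would fail if carried out. Likewise, the role of evenness is not that accumulated defects land in the kernel of $F\mapsto\overline{F}$; evenness is what makes the parity of a self-chord, and the equivalence relation among nonself-chords, well-defined in the first place.

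Second, your normal-form outline hides the hard step. The paper does not invoke \cite{ST} in a relative form; it builds an explicit standard Gauss diagram $G(m,2l;p,q;r,s)$ through a calculus of \emph{shells} and commutation relations among eight labelled nonself-chord types (Sections~\ref{sec-shell}--\ref{sec-nonself}, culminating in Proposition~\ref{prop-even}). Crucially, this normal form is \emph{not} unique for given invariants: $G(m,2l;p,q;r,s)$ and $G(m,2l';q,p;s,r)$ (with $l'$ appropriately shifted, and a further adjustment when $m$ is odd) carry identical $J$'s and $\overline{F}$ but are distinct diagrams. One must then prove they are $\Xi$-equivalent (Lemma~\ref{lem-rep-even}), via a nontrivial trick of sliding an auxiliary free chord once around $C_{2}$; this is where the $\doteq$-ambiguity genuinely enters---on the geometric side, not the invariant side. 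Your step~(c) gestures at this, but the sentence ``equal invariants $\Rightarrow$ $L_{0}=L_{0}'$'' is false as written; the correct claim is ``equal invariants $\Rightarrow$ $L_{0}\sim L_{0}'$'', and supplying that $\sim$ is the substantive remaining work.
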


This paper is organized as follows. 
In Section~\ref{sec-definitions}, we review the definitions of virtual links and Gauss diagrams. 
The proof of Theorem~\ref{th-odd} is given in Section~\ref{sec-odd} 
by showing that a forbidden move is generated by $\Xi$-moves 
in the case of $2$-component odd virtual links. 
In Section~\ref{sec-shell}, we study 
shells, which are a certain kind of self-chords 
in a Gauss diagram, and prove that any Gauss diagram can be deformed into 
a certain form with shells up to $\Xi$-moves 
(Proposition~\ref{prop-selfchord}). 
In Section~\ref{sec-standardform}, 
we introduce the notion of a ladder 
consisting of parallel nonself-chords in a Gauss diagram, 
and give a representative of the equivalence class of a $2$-component virtual link 
under $\Xi$-moves (Proposition~\ref{prop-even}). 
In Section~\ref{sec-even-inv}, we define three kinds of invariants 
$J(L, K_{1})$, $J(L, K_{2})$, and $\overline{F}(L)$ 
of a $2$-component even virtual link $L=K_{1}\cup K_{2}$, 
and establish a relationship among these invariants (Theorem~\ref{th-relation}). 
The last section is devoted to the proof of Theorem~\ref{th-even}.

\begin{acknowledgements}
The authors would like to thank Professors Takuji Nakamura and Yasutaka Nakanishi for useful comments and suggestions on an early version of the paper. 
They also thank the referee for the careful reading of the paper and for his/her comments and suggestions. 
The first author was partly supported by the project AlMaRe (ANR-19-CE40-0001-01) of the ANR.
The second author was partially supported by JSPS KAKENHI Grant Number JP19K03466. 
The third author was partially supported by JSPS KAKENHI Grant Number JP19J00006. 
\end{acknowledgements}

\section{Virtual links and Gauss diagrams}\label{sec-definitions}
For an integer $\mu\geq1$, a {\em $\mu$-component virtual link diagram} is 
the image of an immersion of $\mu$ oriented and ordered  
circles into the plane, 
whose singularities are only transverse double points. 
Such double points are divided into {\em classical crossings} and {\em virtual crossings}. 
At a classical crossing, we distinguish two intersecting arcs, 
called the {\it over-arc} and {\it under-arc} formally, 
by removing a small path from the under-arc 
to indicate the crossing information. 
We also define the {\it sign} of a classical crossing 
with respect to the orientation of the arcs 
as shown in the left of Figure~\ref{xing}. 
At a virtual crossing, we surround it by a small circle. 
See the right of the figure.

\begin{figure}[htbp]
  \centering
    \begin{overpic}[width=7.5cm]{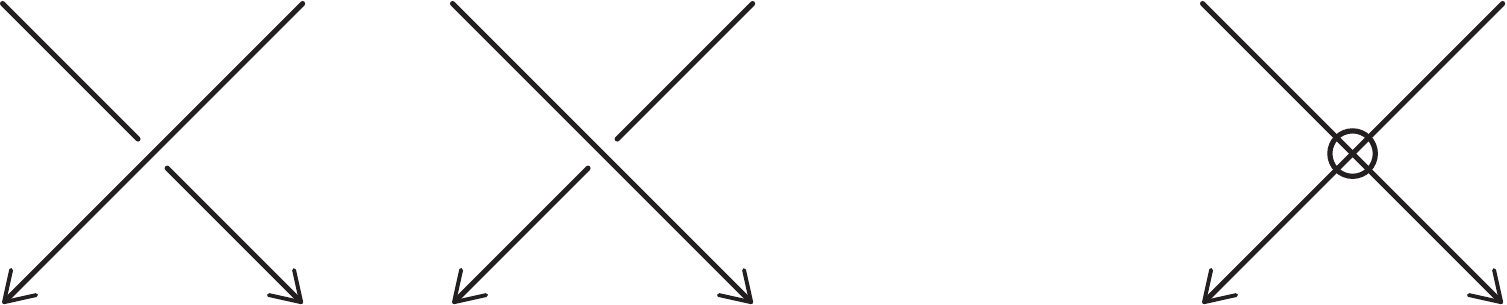}
      \put(18,3){$+$}
      \put(82,3){$-$}
      \put(-23,-15){positive/negative classical crossing}
      \put(159,-15){virtual crossing}
    \end{overpic}
  
  \vspace{1em}
  \caption{Two types of crossings}
  \label{xing}
\end{figure}

A {\em $\mu$-component virtual link} is an equivalence class of $\mu$-component virtual link diagrams under seven kinds of 
{\em generalized Reidemeister moves} 
R1--R3 and V1--V4 as shown in Figure~\ref{Rmoves} (cf.~\cite{K99}). 
In particular, 
the moves R1--R3 are called {\it classical Reidemeister moves}, 
and V1--V4 are {\it virtual Reidemeister moves}. 
We remark that the equivalence relation keeps 
the order of the components. 
For example, Figure~\ref{ex-sequence} shows a sequence of 
$2$-component virtual link diagrams related by 
generalized Reidemeister moves five times.

\begin{figure}[htbp]
\centering
    \begin{overpic}[width=12cm]{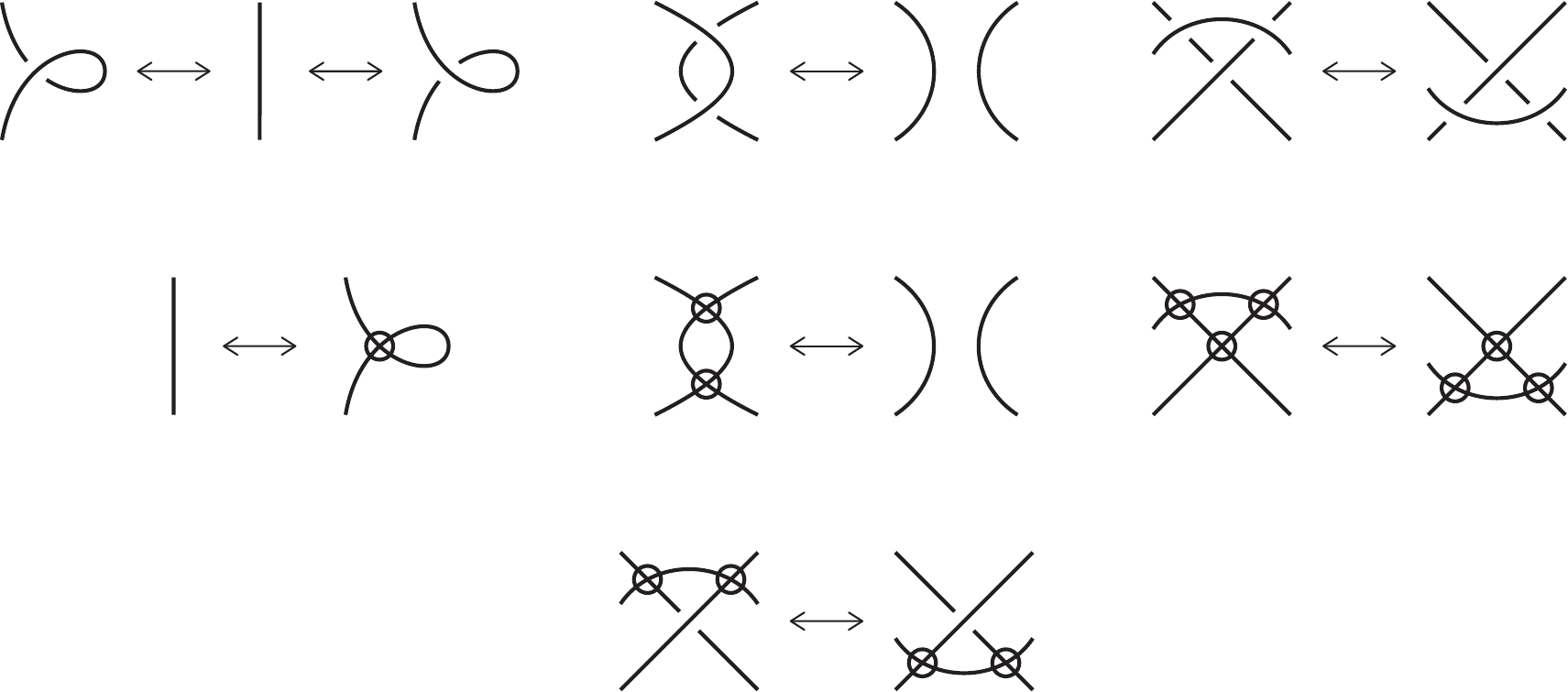}
      \put(32,140.5){R1}
      \put(69.5,140.5){R1}
      \put(174,140.5){R2}
      \put(290,140.5){R3}
      \put(50.5,80.5){V1}
      \put(174,80.5){V2} 
      \put(290,80.5){V3}
      \put(174,20.5){V4}
    \end{overpic}
  \caption{Generalized Reidemeister moves}
  \label{Rmoves}
\end{figure}

\begin{figure}[htbp]
\centering
    \begin{overpic}[width=12cm]{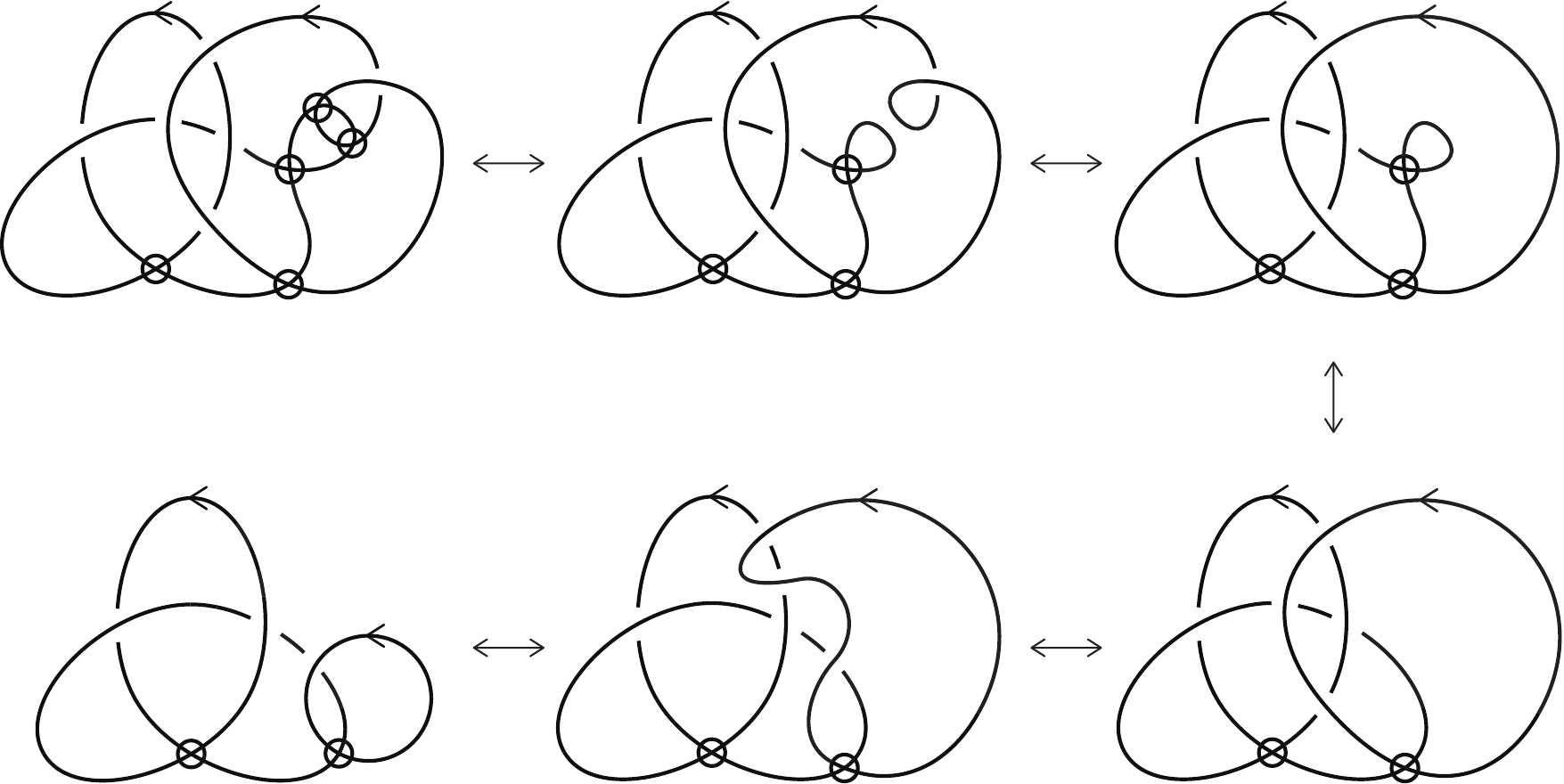}
      \put(105,140){V2}
      \put(227,140){R1}
      \put(297,80){V1}
      \put(227,35){R3}
      \put(105,35){R2}
    \end{overpic}
  \caption{A sequence of $2$-component virtual link diagrams}
  \label{ex-sequence}
\end{figure}

A {\em Gauss diagram} is a disjoint union of ordered and oriented circles together with signed and oriented chords whose endpoints lie disjointly on the circles. 
A chord in a Gauss diagram is called a {\em self-chord} if both endpoints 
of the chord lie on the same circle of the Gauss diagram; 
otherwise it is called a {\em nonself-chord}. 
We say that a self-chord $\gamma$ is {\em free} 
if the endpoints of $\gamma$ are adjacent on the circle; 
that is, one of the arcs on the circle spanned 
by the endpoints of $\gamma$ 
contains no endpoints of any other chords. 
Given a $\mu$-component virtual link diagram $D$ with $n$ classical crossings and some or no virtual crossings, 
the {\em Gauss diagram associated with $D$} 
is defined to be the union of $\mu$~circles 
corresponding to the components of $D$ 
and $n$~chords connecting the preimage of each classical crossing. 
Each chord of the Gauss diagram is equipped with the sign of the corresponding classical crossing, and oriented from the over-arc to the under-arc.

\begin{example}\label{ex21}
Let $L=K_1\cup K_2$ be a $2$-component 
virtual link presented by a virtual link diagram 
as shown in the left of Figure~\ref{ex-Gauss}. 
The corresponding Gauss diagram consists of two circles 
$C_1$ and $C_2$ 
with six chords, 
where $C_i$ corresponds to $K_i$ $(i=1,2)$. 
Three of the chords are self-chords 
corresponding to the crossings
labeled $1$, $2$, and $6$. 
In particular, the self-chord $6$ 
is a free chord. 
The other three chords labeled $3$, $4$, and $5$ are nonself-chords 
connecting the two circles $C_1$ and $C_2$. 
\end{example}

\begin{figure}[htbp]
\centering
    \begin{overpic}[width=10cm]{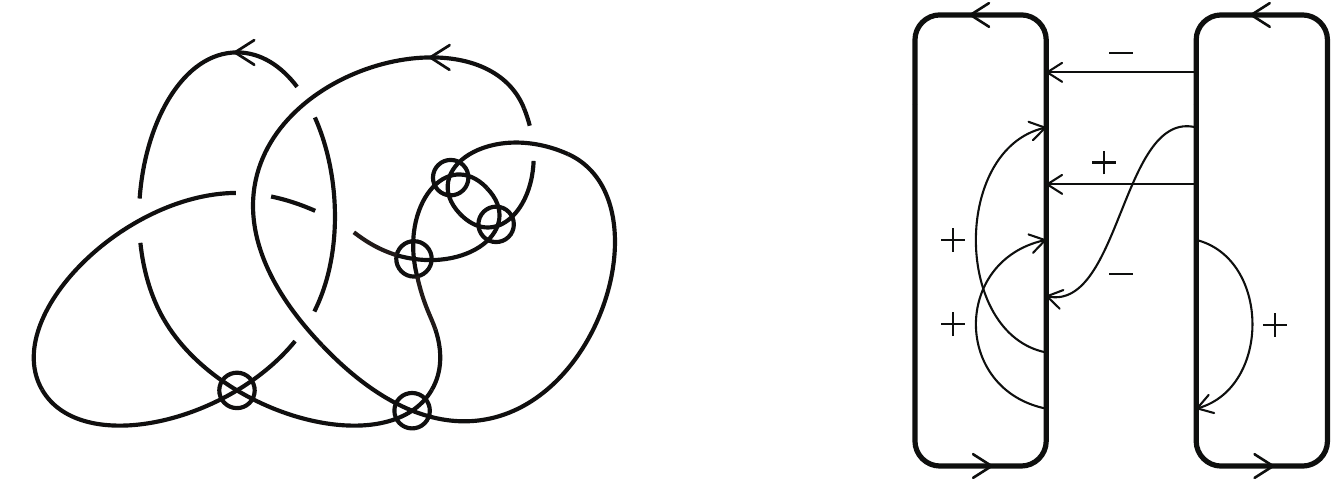}
      \put(-2,46){$K_{1}$}
      \put(136,46){$K_{2}$}
      \put(180,46){$C_{1}$}
      \put(289,46){$C_{2}$}
      \put(19,55){$1$}
      \put(76,56){$2$}
      \put(47,65){$3$}
      \put(64,18){$4$}
      \put(64.5,87){$5$}
      \put(118,74){$6$}
      \put(228,47){$1$}
      \put(228,11){$1$}
      \put(228,71){$2$}
      \put(228,23){$2$}
      \put(259,71){$3$}
      \put(215,35){$3$}
      \put(215,59){$4$}
      \put(259,59){$4$}
      \put(215,84){$5$}
      \put(259,84){$5$}
      \put(247,47){$6$}
      \put(247,11){$6$}
    \end{overpic}
  \caption{A $2$-component virtual link diagram and its Gauss diagram}
  \label{ex-Gauss}
\end{figure}

We consider the deformations on Gauss diagrams 
corresponding to generalized Reidemeister moves on 
virtual link diagrams. 
By definition, the four virtual Reidemeister moves V1--V4 on virtual link diagrams do not affect the corresponding Gauss diagrams; 
in fact, a Gauss diagram has no information on virtual crossings. 
On the other hand, a classical Reidemeister move R1 
adds or removes a free chord in the corresponding Gauss diagram. 
See the left of Figure~\ref{R1R2-Gauss}, 
where $\varepsilon=\pm$. 
For a classical Reidemeister move R2, 
we have an addition or deletion of a pair of chords 
with the same direction and opposite signs 
such that their initial and terminal endpoints are adjacent, 
respectively. 
See the right of the figure.

\begin{figure}[htbp]
\centering
    \begin{overpic}[width=12cm]{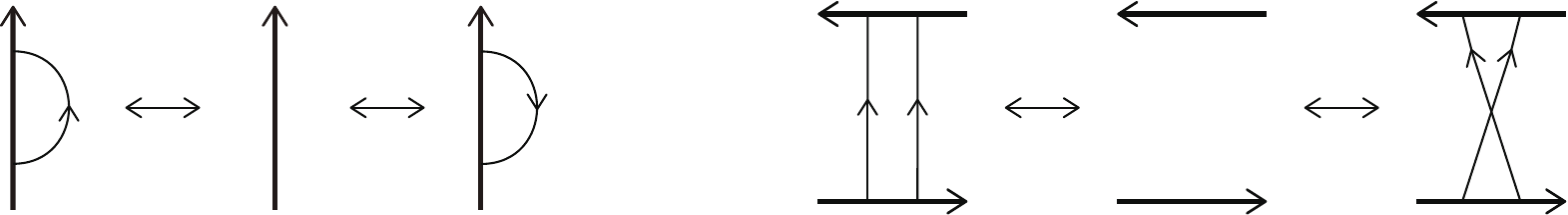}
      \put(30,29){R1}
      \put(78.5,29){R1}
      \put(221,29){R2}
      \put(286,29){R2}
      \put(15,12){$\e$}
      \put(117,12){$\e$}
      \put(180,12){$\e$}
      \put(203,12){$-\e$}
      \put(313,12){$\e$}
      \put(331,12){$-\e$}
    \end{overpic}
  \caption{Reidemeister moves R1 and R2 on Gauss diagrams}
  \label{R1R2-Gauss}
\end{figure}

A classical Reidemeister move R3 
involves three arcs and three classical crossings on a virtual link diagram. 
There are several types of R3's 
depending on the orientations of the arcs 
and the over/under information at the crossings. 
Figure~\ref{R3-Gauss} shows two typical examples of 
Reidemeister moves R3 
and the corresponding deformations on Gauss diagrams. 
We will use these two R3's in Section~\ref{sec-standardform}. 
Therefore a virtual link can be considered as an equivalence class of all Gauss diagrams under Reidemeister moves R1--R3 (cf. \cite{GPV,K99}).

\begin{figure}[htbp]
\centering
    \begin{overpic}[width=12cm]{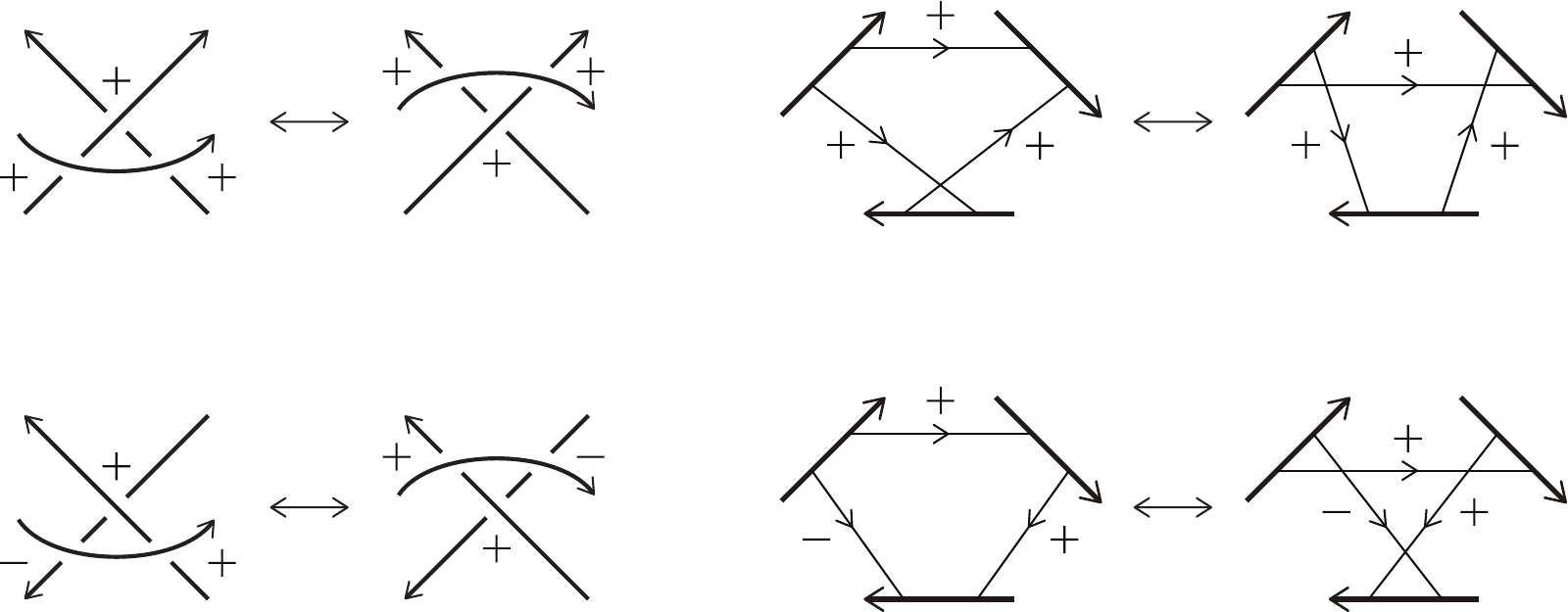}
      \put(61.25,112){R3}
      \put(249,112){R3}
      \put(61.25,28){R3}
      \put(249,28){R3}
    \end{overpic}
  \caption{Two Reidemeister moves R3}
  \label{R3-Gauss}
\end{figure}

Now we introduce another deformation 
on a Gauss diagram. 
Let $P_{1},P_{2}$, and $P_{3}$ be 
three consecutive endpoints  of chords 
lying on the same circle of a Gauss diagram. 
A {\em $\Xi$-move} \cite{ST} is 
a deformation which exchanges the positions of $P_{1}$ and $P_{3}$ 
with preserving the signs and orientations of the chords. 
See Figure~\ref{Xi-Gauss}. 
In the definition of a $\Xi$-move, 
we consider all signs and orientations of the chords, 
and some of the chords are possibly the same. 
In the figure, a pair of dots $\bullet$ marks 
the two endpoints $P_1$ and $P_3$ exchanged by a $\Xi$-move.

\begin{figure}[htbp]
\centering
    \begin{overpic}[width=5cm]{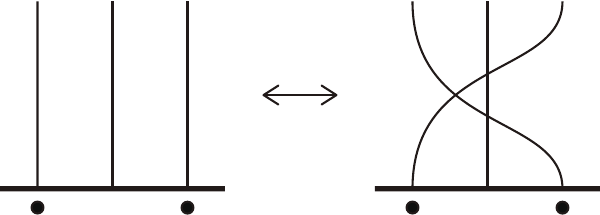}
      \put(4,-13){$P_{1}$}
      \put(22,-13){$P_{2}$}
      \put(40,-13){$P_{3}$}
      \put(-2,41){$\e_{1}$}
      \put(15.5,41){$\e_{2}$}
      \put(33.5,41){$\e_{3}$}
      \put(92.5,-13){$P_{3}$}
      \put(110.5,-13){$P_{2}$}
      \put(128.5,-13){$P_{1}$}
      \put(88,41){$\e_{1}$}
      \put(105,41){$\e_{2}$}
      \put(134,41){$\e_{3}$}
    \end{overpic}
  \vspace{1em}
  \caption{A $\Xi$-move}
  \label{Xi-Gauss}
\end{figure}

We say that 
two Gauss diagrams $G$ and $G'$ are {\em $\Xi$-equivalent} if they are related by a finite sequence of Reidemeister moves R1--R3 and $\Xi$-moves. 
We denote it by $G\sim G'$. 
Two virtual links are {\em $\Xi$-equivalent} if their Gauss diagrams are $\Xi$-equivalent. 

\bigskip

In the rest of this paper, 
we only consider {\em $2$-component} virtual links. 
Let $L=K_{1}\cup K_{2}$ be a $2$-component virtual link, and $G$ a Gauss diagram 
presenting $L$ with two circles $C_{1}$ and $C_{2}$ corresponding to 
the components $K_1$ and $K_2$ of $L$, 
respectively. 
For integers $i$ and $j$ with $\{i,j\}=\{1,2\}$, 
a chord in $G$ is called {\em of type~$(i,j)$} if it is 
a nonself-chord connecting $C_i$ and $C_j$ 
and oriented from $C_{i}$ to $C_{j}$. 

\begin{definition}[{cf.~\cite[Section 1.7]{GPV}}]\label{def-Lk}
For $i$ and $j$ with $\{i,j\}=\{1,2\}$, 
the {\em $(i,j)$-linking number} of $L=K_1\cup K_2$ is the sum of signs of all nonself-chords of type~$(i,j)$ in $G$. 
We denote it by ${\rm Lk}(K_{i},K_{j})$. 
\end{definition}

The integers ${\rm Lk}(K_1,K_2)$ and ${\rm Lk}(K_2,K_1)$ are 
both invariants of the virtual link $L$; 
that is, they do not depend on a particular choice 
of a Gauss diagram $G$ presenting $L$.

\begin{definition}[cf.~\cite{MWY}]\label{defparity}
The {\it parity of a $2$-component virtual link} $L=K_1\cup K_2$ is 
the parity of ${\rm Lk}(K_1,K_2)+{\rm Lk}(K_2,K_1)$. 
\end{definition}

By definition, the parity of $L$ is coincident with 
that of the number of nonself-chords in any Gauss diagram of $L$. 
For example, the $2$-component virtual link 
$L=K_1\cup K_2$ given in Example~\ref{ex21} 
satisfies 
${\rm Lk}(K_1,K_2)=0$ and ${\rm Lk}(K_2,K_1)=-1$, 
and therefore, $L$ is odd. 

We have a relationship between a $\Xi$-move and  
linking numbers as follows. 

\begin{lemma}\label{lem-Lk}
The $(i,j)$-linking number ${\rm Lk}(K_{i},K_{j})$ 
of $L=K_1\cup K_2$ is invariant under $\Xi$-moves 
for $\{i,j\}=\{1,2\}$. 
Therefore the parity of $L$ is preserved under $\Xi$-moves. 
\end{lemma}

\begin{proof}
For any nonself-chord in a Gauss diagram, 
a $\Xi$-move does not change its sign and type $(i,j)$. 
Therefore the sum of signs of all nonself-chords of type~$(i,j)$ 
is preserved. 
\end{proof}

\section{Proof of Theorem~\ref{th-odd}}\label{sec-odd} 

In this section, 
we study $2$-component {\it odd} virtual links, 
and prove Theorem~\ref{th-odd} 
by giving a complete representative system of 
$\Xi$-equivalence classes of the links.

\begin{lemma}\label{lem-cross-move}
A deformation on a Gauss diagram 
as shown in {\rm Figure~\ref{cross-move}} 
is realized by two $\Xi$-moves. 
\end{lemma}

\begin{figure}[htbp]
\centering
    \begin{overpic}[width=6cm]{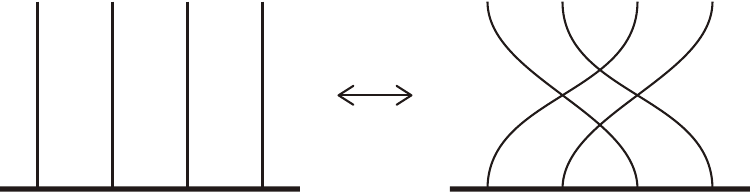}
    \end{overpic}
  \caption{A deformation in Lemma~\ref{lem-cross-move}}
  \label{cross-move}
\end{figure}

\begin{proof}
We apply a $\Xi$-move to the first and third endpoints, 
and another $\Xi$-move to the second and fourth endpoints. 
\end{proof}

\begin{lemma}\label{lem-odd}
A deformation on a Gauss diagram 
of a $2$-component odd virtual link 
as shown in {\rm Figure~\ref{ends-exchange}} is realized by 
a finite number of $\Xi$-moves. 
\end{lemma}

\begin{figure}[htbp]
\centering
    \begin{overpic}[width=5cm]{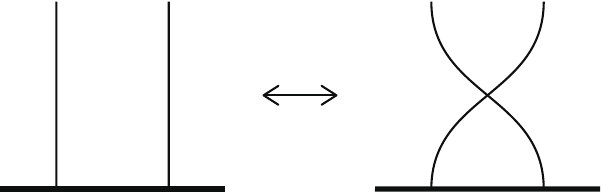}
    \end{overpic}
  \caption{A deformation in Lemma~\ref{lem-odd}}
  \label{ends-exchange}
\end{figure}

\begin{proof} 
We may prove this lemma for two consecutive endpoints on 
the circle $C_1$ of the Gauss diagram. 
Since the virtual link is odd, the number of endpoints of chords on $C_1$ is odd. 
Let $P_{1},P_{2},\ldots,P_{2n+1}$ be the endpoints on $C_1$ 
arranged in this order.

The exchange of the positions of $P_{1}$ and $P_{2}$ 
is realized by a combination of $\Xi$-moves 
as shown in Figure~\ref{pf-lem-odd}(1)--(4). 
More precisely, the arrangement (2) is obtained from (1) 
by applying the deformations in Lemma~\ref{lem-cross-move} $n-1$ times. 
Next we obtain (3) from (2) by a single $\Xi$-move 
exchanging the positions of $P_{1}$ and $P_{2n+1}$. 
Finally we slide $P_{1}$ and $P_{2}$ along $C_1$ 
to obtain (4) from (3). 
\end{proof}

\begin{figure}[htbp]
\centering
    \begin{overpic}[width=12cm]{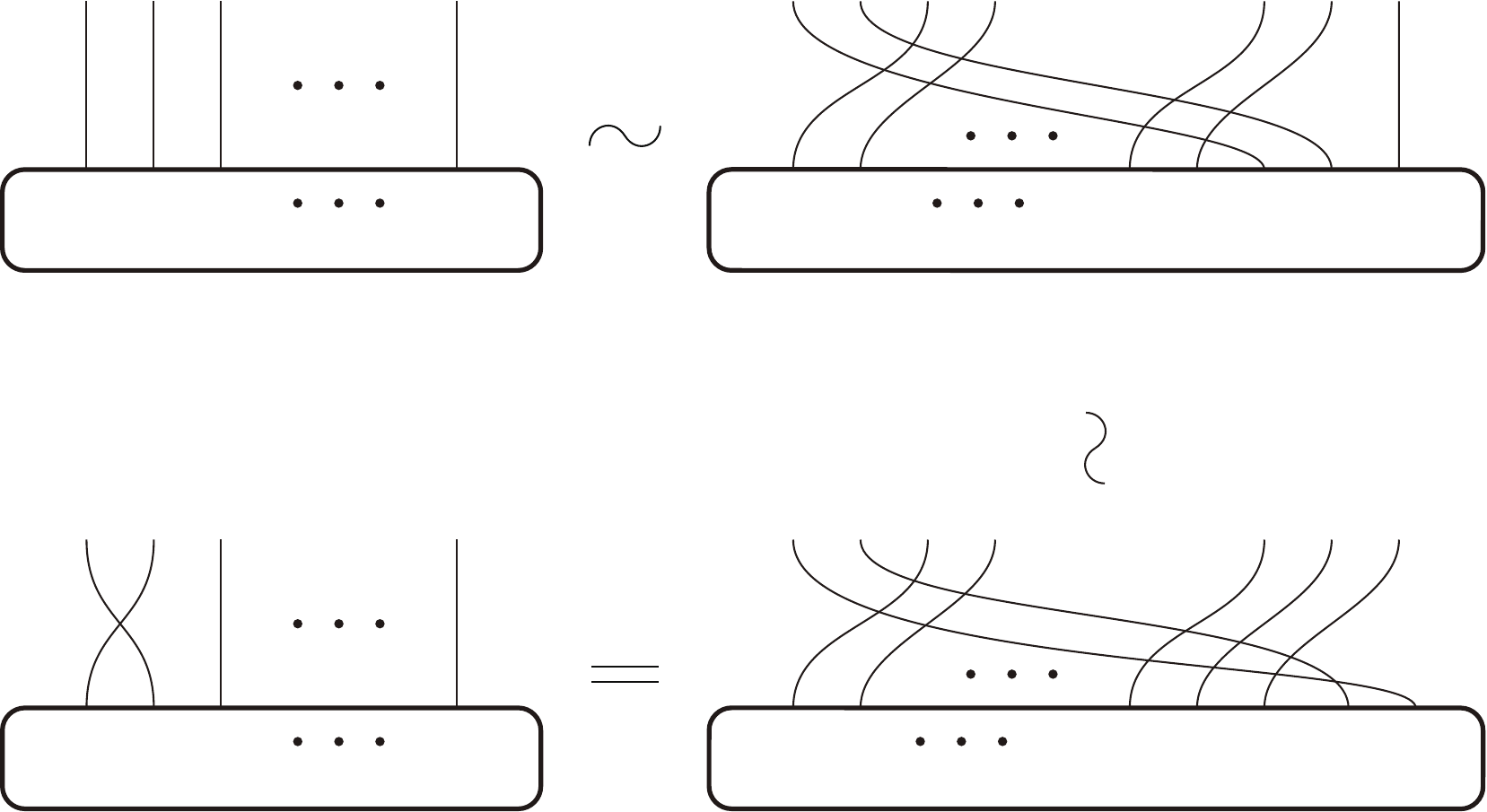}
      \put(54,109){(1)}
      \put(244,109){(2)}
      \put(54,-14){(4)}
      \put(244,-14){(3)}
      \put(-15,133){$C_{1}$}
      \put(126,164){Lem~\ref{lem-cross-move}}
      \put(260,80){$\Xi$}
      \put(15,137){$P_{1}$}
      \put(30.5,137){$P_{2}$}
      \put(46,137){$P_{3}$} 
      \put(95,137){$P_{2n+1}$}
      \put(15,13){$P_{2}$}
      \put(30,13){$P_{1}$}
      \put(46,13){$P_{3}$}
      \put(95,13){$P_{2n+1}$}
      \put(177,137){$P_{3}$}
      \put(194,137){$P_{4}$}
      \put(243,137){$P_{2n-1}$}
      \put(270.5,137){$P_{2n}$}
      \put(288,137){$P_{1}$}
      \put(301,137){$P_{2}$}
      \put(314,137){$P_{2n+1}$}
      \put(177,13){$P_{3}$}
      \put(194,13){$P_{4}$}
      \put(237,13){$P_{2n-1}$}
      \put(263,13){$P_{2n}$}
      \put(280.5,13){$P_{2n+1}$}
      \put(307,13){$P_{2}$}
      \put(321,13){$P_{1}$}
    \end{overpic}
  \vspace{1em}
  \caption{Proof of Lemma~\ref{lem-odd}}
  \label{pf-lem-odd}
\end{figure}

We define a map $e:\mathbb{Z}\rightarrow\{-1,0,1\}$ 
by $e(n)=1$ for $n>0$, 
$e(n)=-1$ for $n<0$, 
and $e(n)=0$ for $n=0$. 
For integers $a,b\in\mathbb{Z}$, 
we denote by $G(a,b)$ the Gauss diagram with two circles 
$C_{1}$ and $C_{2}$ as shown in the left of Figure~\ref{rep-odd}; that is, 
it consists of $|a|$ horizontal nonself-chords of type~$(1,2)$ with sign~$e(a)$ 
and $|b|$ horizontal nonself-chords of type~$(2,1)$ with sign~$e(b)$. 
In the right of the figure, we illustrate the Gauss diagram $G(3,-2)$. 
Let $L(a,b)$ denote the $2$-component virtual link 
presented by the Gauss diagram $G(a,b)$. 
Then we see that the set 
$$\{L(a,b)\mid a,b\in \mathbb{Z} \text{ with } a+b\equiv 1 \ ({\rm mod}~2)\}$$ 
is a complete representative system of $\Xi$-equivalence classes 
of $2$-component odd virtual links as follows.

\begin{figure}[htbp]
\centering
    \begin{overpic}[width=9cm]{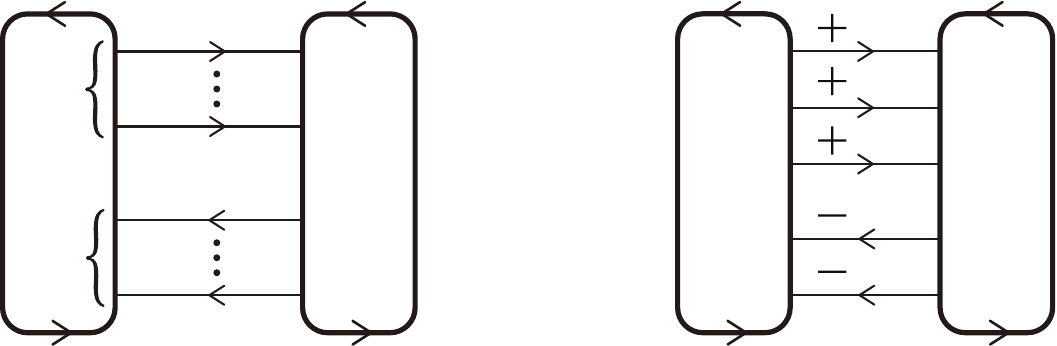}
      \put(8,60){$|a|$}
      \put(9,19){$|b|$}
      \put(-14,39){$C_{1}$}
      \put(105,39){$C_{2}$}
      \put(31,76){$e(a)$}
      \put(31,58){$e(a)$}
      \put(31,35){$e(b)$}
      \put(31,17){$e(b)$}
      \put(149,39){$C_{1}$}
      \put(259,39){$C_{2}$}
      \put(37,-13){$G(a,b)$}
      \put(192,-13){$G(3,-2)$}
    \end{overpic}
  \vspace{1em}
  \caption{The Gauss diagram $G(a,b)$}
  \label{rep-odd}
\end{figure}

\begin{proposition}\label{prop-odd}
Let $L=K_{1}\cup K_{2}$ be a $2$-component odd virtual link. 
Then $L$ is $\Xi$-equivalent to $L(a,b)$, 
where $a={\rm Lk}(K_{1},K_{2})$ and $b={\rm Lk}(K_{2},K_{1})$. 
\end{proposition}

\begin{proof}
Let $G$ be a Gauss diagram of $L$ with two circles $C_{1}$ and $C_{2}$. 
By Lemma~\ref{lem-odd}, we can freely move the positions of endpoints on $C_{i}$ $(i=1,2)$ up to $\Xi$-equivalence. 
Therefore we deform every self-chord into a free chord, and remove it by an R1-move. 
Furthermore, we rearrange the nonself-chords horizontally so that the nonself-chords of type~$(1,2)$ are placed above those of type~$(2,1)$. 
If two consecutive nonself-chords of the same type have opposite signs, then we delete them by an R2-move. 
Finally $G$ is $\Xi$-equivalent to $G(a,b)$ for some $a,b\in\mathbb{Z}$. 
Therefore $L$ is $\Xi$-equivalent to the virtual link $L(a,b)$ presented by $G(a,b)$. 

By definition, $L(a,b)=K_{1}'\cup K_{2}'$ satisfies  ${\rm Lk}(K'_{1},K'_{2})=a$ and ${\rm Lk}(K'_{2},K'_{1})=b$. 
Since $L$ and $L(a,b)$ are $\Xi$-equivalent, we have ${\rm Lk}(K_{1},K_{2})=a$ and ${\rm Lk}(K_{2},K_{1})=b$ by Lemma~\ref{lem-Lk}. 
\end{proof}

\begin{proof}[Proof of {\rm Theorem~\ref{th-odd}}]
\underline{${\rm (i)}\Rightarrow{\rm (ii)}$.}~
This follows from Lemma~\ref{lem-Lk} directly. 

\underline{${\rm (ii)}\Rightarrow{\rm (i)}$.}~
By Proposition~\ref{prop-odd}, 
$L=K_{1}\cup K_{2}$ and $L'=K'_{1}\cup K'_{2}$ are $\Xi$-equivalent to $L(a,b)$ and $L(a',b')$, respectively, 
where 
\[
a={\rm Lk}(K_{1},K_{2}),\ b={\rm Lk}(K_{2},K_{1}),\ a'={\rm Lk}(K'_{1},K'_{2}),\ \text{and}\ b'={\rm Lk}(K'_{2},K'_{1}). 
\]
Since ${\rm Lk}(K_{i},K_{j})={\rm Lk}(K'_{i},K'_{j})$ holds for $\{i,j\}=\{1,2\}$,
we have $a=a'$ and $b=b'$. 
Therefore $L(a,b)$ is coincident to $L(a',b')$. 
\end{proof}

\begin{remark}
The deformation in Lemma~\ref{lem-odd} is called 
a {\it forbidden move} (cf.~\cite{GPV, Kan, Nel}). 
Lemma~\ref{lem-odd} implies that 
for a $2$-component odd virtual link, 
a forbidden move is realized by $\Xi$-moves. 
On the other hand, 
a $\Xi$-move is obviously realized by forbidden moves. 
Therefore the classification of $2$-component odd virtual links up to $\Xi$-moves 
is coincident with that up to forbidden moves, 
which is studied in \cite[Proposition~3.6]{ABMW} and \cite[Corollary~7]{Oka}.
\end{remark}

\section{Shells}\label{sec-shell} 
To prove Theorem~\ref{th-even} for $2$-component even virtual links, we prepare several lemmas and propositions in Sections~\ref{sec-shell} and~\ref{sec-standardform}. 
It is not necessary to restrict the argument to $2$-component even virtual links. 
Therefore we do not assume that a $2$-component virtual link is even in these sections.

In a Gauss diagram, 
let $P_{1},P_{2}$, and $P_{3}$ be 
three consecutive endpoints  of chords 
lying on the same circle of the Gauss diagram. 
If $P_1$ and $P_3$ are connected by a self-chord, 
then the chord is called a {\it shell} 
(cf.~\cite{NNS}). 
See the left of Figure~\ref{shell}. 
Note that the orientation of a shell can be altered by a $\Xi$-move 
as shown in the right of the figure. 
In this sense, we may omit the orientation of a shell up to $\Xi$-equivalence in figures.

\begin{figure}[htbp]
\centering
    \begin{overpic}[width=8cm]{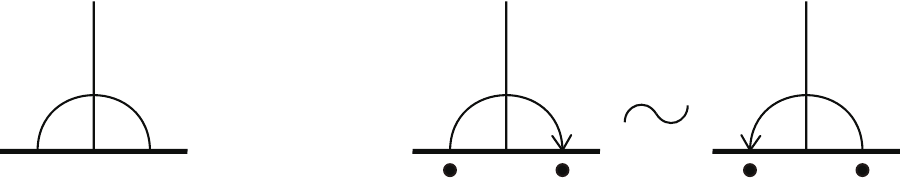}
      \put(4,-5){$P_{1}$}
      \put(19,-5){$P_{2}$}
      \put(34,-5){$P_{3}$}
      \put(112,19){$\e$}
      \put(188,19){$\e$}
      \put(163,22){$\Xi$}
    \end{overpic}
  \caption{A shell and its orientation}
  \label{shell}
\end{figure}

A {\em shell-pair} consists of a pair of shells whose four endpoints are consecutive 
on the same circle. 
If a shell-pair consists of one positive and one negative shells, 
then we can delete it by an R2-move 
(after choosing the orientations of the shells suitably). 
We say that a shell-pair is {\em positive} (resp. {\em negative}) if it consists of two positive (resp. two negative) shells. 
See Figure~\ref{shell-pair}.

\begin{figure}[htbp]
\centering
    \begin{overpic}[width=6cm]{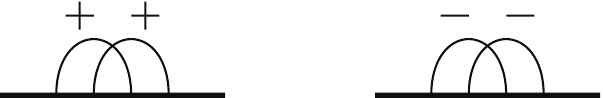}
    \end{overpic}
  \caption{A positive/negative shell-pair}
  \label{shell-pair}
\end{figure}

The following lemma allows us to move a shell-pair along 
a circle of a Gauss diagram freely.

\begin{lemma}\label{lem-sliding}
If two Gauss diagrams are related by a deformation as shown in {\rm Figure~\ref{sliding}}, then they are $\Xi$-equivalent.
\end{lemma}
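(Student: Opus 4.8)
The plan is to reduce the displayed local move, which slides a shell-pair past a single chord endpoint $Q$ on the circle, to the already-established moves of the paper, principally Lemma~\ref{lem-cross-move} and the freedom afforded by $\Xi$-moves themselves. First I would set up notation: label the two shells of the shell-pair by $S_1$ and $S_2$, with the four consecutive endpoints $a_1,a_1',a_2,a_2'$ on the circle (so $a_k,a_k'$ are the two ends of $S_k$), and let $Q$ denote the endpoint of the external chord that the shell-pair is to be slid across. The $\Xi$-move in its pure form already exchanges the first and last of three consecutive endpoints; the content to be extracted from Lemma~\ref{lem-cross-move} is that a \emph{block} of adjacent endpoints behaves rigidly enough that one can move it past a neighboring endpoint. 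So the strategy is: move $Q$ past the shell-pair one endpoint at a time, using a $\Xi$-move (or the cross-move of Lemma~\ref{lem-cross-move}) for each passage, and check that after all four passages one recovers exactly the target configuration, possibly after correcting shell orientations via the move of Figure~\ref{orientation}.

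The key steps, in order, are as follows. (1) Using a $\Xi$-move on the triple $(Q, a_1, a_1')$ — here $Q,a_1$ are consecutive and $a_1,a_1'$ are consecutive — swap $Q$ with $a_1'$, so that $Q$ now sits between $a_1$ and $a_2$; since $a_1,a_1'$ are the two ends of the single chord $S_1$, this is precisely a $\Xi$-move that moves one chord's endpoint across $S_1$. (2) Repeat with the triple $(Q, a_1, a_1')$ reread appropriately — actually, more carefully, I would first apply Lemma~\ref{lem-cross-move} to pass $Q$ across the entire chord $S_1$ in one stroke (this is exactly the situation of Figure~\ref{cross-move}: an endpoint crossing both feet of a chord), landing $Q$ between $a_1'$ and $a_2$. (3) Apply Lemma~\ref{lem-cross-move} again to pass $Q$ across $S_2$, landing $Q$ on the far side of the shell-pair. (4) Finally, compare the resulting diagram with the target of Figure~\ref{sliding}: the shell-pair is now on the other side of $Q$, but the individual shells may have acquired reversed orientations as a byproduct of Lemma~\ref{lem-cross-move}; these are corrected by the observation (Figure~\ref{orientation}) that a shell's orientation is immaterial up to $\Xi$-equivalence. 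One then reads off that the two Gauss diagrams are $\Xi$-equivalent.

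The main obstacle I anticipate is bookkeeping rather than conceptual: one must be careful that at each invocation of Lemma~\ref{lem-cross-move} the hypotheses genuinely hold — namely that the endpoint being moved is adjacent to both feet of the chord it crosses, with no stray endpoints interposed — and this requires keeping the four shell-endpoints $a_1,a_1',a_2,a_2'$ consecutive throughout, which is exactly the ``isolated from other chord ends'' hypothesis in the definition of shell-pair. A secondary subtlety is that Figure~\ref{sliding} presumably depicts the shell-pair being slid past a \emph{segment of circle containing possibly several external endpoints}, or past a chord rather than a bare endpoint; if so, the argument above is applied once per external endpoint encountered, and one must note that the shells' free-endpoint structure (each shell separated by exactly one endpoint) is preserved after the slide, so the result is again a bona fide shell-pair. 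Since each elementary passage is handled by an already-proven move and orientations are free, the sequence composes to give the claimed $\Xi$-equivalence.
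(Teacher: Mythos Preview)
Your plan rests on a mistaken picture of what a shell-pair is. You set the four consecutive endpoints on the circle to be $a_1,a_1',a_2,a_2'$ with $a_k,a_k'$ the two ends of $S_k$; but then each $S_k$ has \emph{adjacent} endpoints and is a free chord, not a shell. By the definition in the paper, a shell has its endpoints separated by exactly one endpoint of another chord, and the only way two shells can have four consecutive, isolated endpoints is for them to \emph{interlock}: the order on the circle is $a_1,a_2,a_1',a_2'$. In particular the two feet of $S_1$ are not adjacent, so your step~(2), ``pass $Q$ across both feet of $S_1$ in one stroke'', has no meaning even informally. This is compounded by a second misreading: Lemma~\ref{lem-cross-move} exchanges a \emph{pair} of adjacent endpoints with the next pair (this is how it is used in the proof of Lemma~\ref{lem-odd}), not a single endpoint past two. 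With one endpoint $Q$ facing a block of four shell-endpoints, Lemma~\ref{lem-cross-move} does not apply as you describe.

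The paper's argument is far more direct and does not invoke Lemma~\ref{lem-cross-move} at all: starting from $a_1,a_2,a_1',a_2',Q$, a single $\Xi$-move on the last three endpoints swaps $a_1'$ and $Q$, giving $a_1,a_2,Q,a_2',a_1'$; a second $\Xi$-move on the first three swaps $a_1$ and $Q$, giving $Q,a_2,a_1,a_2',a_1'$. This is again an interlocking shell-pair on the far side of $Q$, now with the signs in the order $\de,\e$, exactly as in Figure~\ref{sliding}. No orientation clean-up or auxiliary lemma is needed.
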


\begin{figure}[htbp]
\centering
    \begin{overpic}[width=5cm]{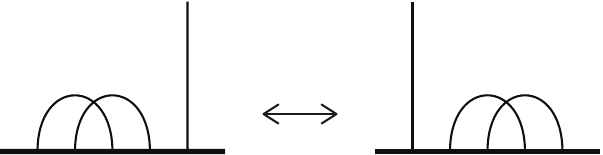}
      \put(6.5,17){$\e_{1}$}
      \put(28,17){$\e_{2}$}
      \put(104,17){$\e_{2}$}
      \put(126,17){$\e_{1}$}
    \end{overpic}
  \caption{Moving a shell-pair along a circle}
  \label{sliding}
\end{figure}

\begin{proof}
This follows by the deformation 
as shown in Figure~\ref{pf-lem-sliding}. 
\end{proof}

\begin{figure}[htbp]
\centering
    \begin{overpic}[width=9cm]{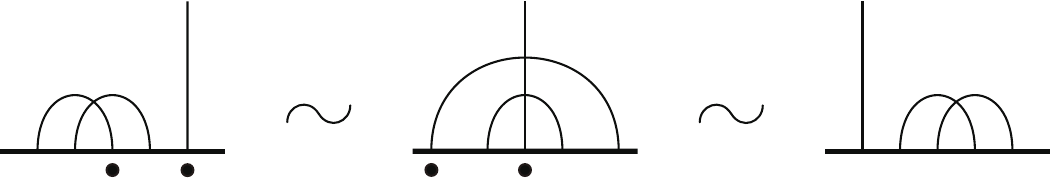}
      \put(7,23){$\e_{1}$}
      \put(29,23){$\e_{2}$}
      \put(100,23){$\e_{1}$}
      \put(110.5,16){$\e_{2}$}
      \put(216,23){$\e_{2}$}
      \put(239,23){$\e_{1}$}
      \put(75,23){$\Xi$}
      \put(175,23){$\Xi$}
    \end{overpic}
  \caption{Proof of Lemma~\ref{lem-sliding}}
  \label{pf-lem-sliding}
\end{figure}

A pair of positive and negative shell-pairs 
can be canceled in the following sense. 

\begin{lemma}[{cf.~\cite[Fig. 13]{ST}}]\label{lem-canceling-pairs}
If two Gauss diagrams are related by a deformation as shown in {\rm Figure~\ref{canceling-pairs}}, then they are $\Xi$-equivalent. 
\end{lemma} 

\begin{figure}[htbp]
\centering
    \begin{overpic}[width=7cm]{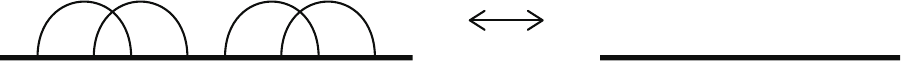}
      \put(11,15){$\e$}
      \put(34,15){$\e$}
      \put(46,15){$-\e$}
      \put(72,15){$-\e$}
    \end{overpic}
  \caption{Adding/canceling two consecutive shell-pairs with opposite signs}
  \label{canceling-pairs}
\end{figure}

\begin{proof}
This follows by the deformation 
as shown in Figure~\ref{pf-lem-canceling-pairs}. 
\end{proof}

\begin{figure}[htbp]
\centering
    \begin{overpic}[width=12.5cm]{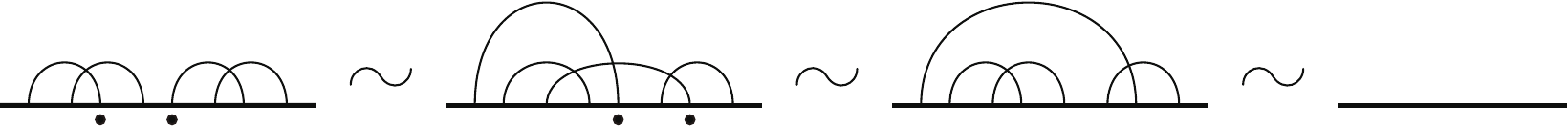}
      \put(83,18){$\Xi$}
      \put(184,18){$\Xi$}
      \put(282,18){R2}
      \put(279,-2){twice}
      \put(9.5,17){$\e$}
      \put(26,17){$\e$}
      \put(36,17){$-\e$}
      \put(53,17){$-\e$}
      \put(103.5,17){$\e$}
      \put(121,17){$\e$}
      \put(140,17){$-\e$}
      \put(156,17){$-\e$}
      \put(205,17){$\e$}
      \put(219,17){$\e$}
      \put(228,17){$-\e$}
      \put(257,17){$-\e$}
    \end{overpic}
  \caption{Proof of Lemma~\ref{lem-canceling-pairs}}
  \label{pf-lem-canceling-pairs}
\end{figure}

We can exchange adjacent endpoints of chords 
(by ignoring shells) in the following sense.

\begin{lemma}\label{lem-exchange}
If two Gauss diagrams are related by a deformation $(1)$, $(2)$, or $(3)$ as shown in {\rm Figure~\ref{exchange}}, then they are $\Xi$-equivalent.
\end{lemma}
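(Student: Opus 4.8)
Following the template set by Lemmas~\ref{lem-sliding}, \ref{lem-sign}, and~\ref{lem-canceling-pairs}, my plan is to argue purely diagrammatically: for each of the three local moves~$(1)$, $(2)$, and~$(3)$ I would exhibit an explicit finite sequence of Reidemeister moves R1--R3, $\Xi$-moves, and invocations of the shell lemmas already established, and record these sequences in a companion figure. No genuinely new manipulation beyond those assembled so far in this section should be required; the point of the lemma is simply that these particular reorderings of a shell against its neighbors all fall within $\Xi$-equivalence.

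The uniform idea is this. A shell is nearly free: the only thing blocking a direct $\Xi$-move or R1-move from carrying it past an adjacent chord end is the single ``obstructing'' endpoint wedged between its two feet. To neutralize that obstruction I would first apply Lemma~\ref{lem-sign} to trade the shell for a shell of the opposite sign together with a shell-pair---equivalently, insert two consecutive shell-pairs of opposite signs via Lemma~\ref{lem-canceling-pairs}. By Lemma~\ref{lem-sliding} the auxiliary shell-pair can then be slid along the circle past any chord ends and parked out of the way, after which the reordering in question, now performed on the remaining lone shell, reduces to one or two $\Xi$-moves---using Lemma~\ref{lem-cross-move} whenever a chord end must be pushed across \emph{both} feet of the shell simultaneously, and a further $\Xi$-move to return the obstructing endpoint to the correct side. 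Finally I would slide the parked shell-pair back and re-merge it (again by Lemma~\ref{lem-sign}) into a single shell at its new location. Reversing the orientation of a shell whenever convenient is free (Figure~\ref{orientation}), and one checks along the way that no intended R2-cancellation is blocked and that no sign, type, or linking number is altered.

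I expect move~$(3)$ to be the main obstacle, since it should be the case in which the shell must pass an endpoint of the very chord it is interleaved with (or a second shell sharing that chord); then the ``park the auxiliary pair elsewhere'' step needs care, because sliding that pair across the wrong endpoint can change which endpoint obstructs the shell and so fail to reach the target configuration. The remedy is to order the $\Xi$-moves so that the interleaving endpoint is always the first or the last of the three endpoints being moved, keeping the shell intact at every stage, and only at the very end to restore the auxiliary pair. What remains is routine sign-and-orientation bookkeeping, which I would present in full in the accompanying figure rather than in the text, exactly as in the proofs of Lemmas~\ref{lem-sliding}--\ref{lem-canceling-pairs}.
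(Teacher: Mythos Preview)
Your core idea---insert cancelling shell-pairs via Lemma~\ref{lem-canceling-pairs} so as to create the extra endpoints needed for $\Xi$-moves, then clean up---is exactly what the paper does for move~$(1)$: one application of Lemma~\ref{lem-canceling-pairs} followed by two $\Xi$-moves suffices. So the approach is right in spirit.

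That said, you are working much harder than necessary, and your assessment of where the difficulty lies is off. Move~$(2)$ is literally a single $\Xi$-move (the three consecutive endpoints are the two feet of the shell and the adjacent chord end); no shell-pair bookkeeping is needed at all. And move~$(3)$, far from being the main obstacle, is an immediate corollary of~$(1)$: apply Lemma~\ref{lem-canceling-pairs} once to introduce two shell-pairs, and then what remains is precisely an instance of move~$(1)$. The elaborate ``park the auxiliary pair elsewhere and re-merge'' protocol you describe, with its ordering constraints on which endpoint is first or last, is unnecessary---the paper never invokes Lemma~\ref{lem-sliding} or Lemma~\ref{lem-sign} here. Your plan would work, but the clean route is: $(2)$ trivial, $(1)$ by Lemma~\ref{lem-canceling-pairs} plus two $\Xi$-moves, $(3)$ by Lemma~\ref{lem-canceling-pairs} plus $(1)$.
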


\begin{figure}[htbp]
\centering
    \begin{overpic}[width=12cm]{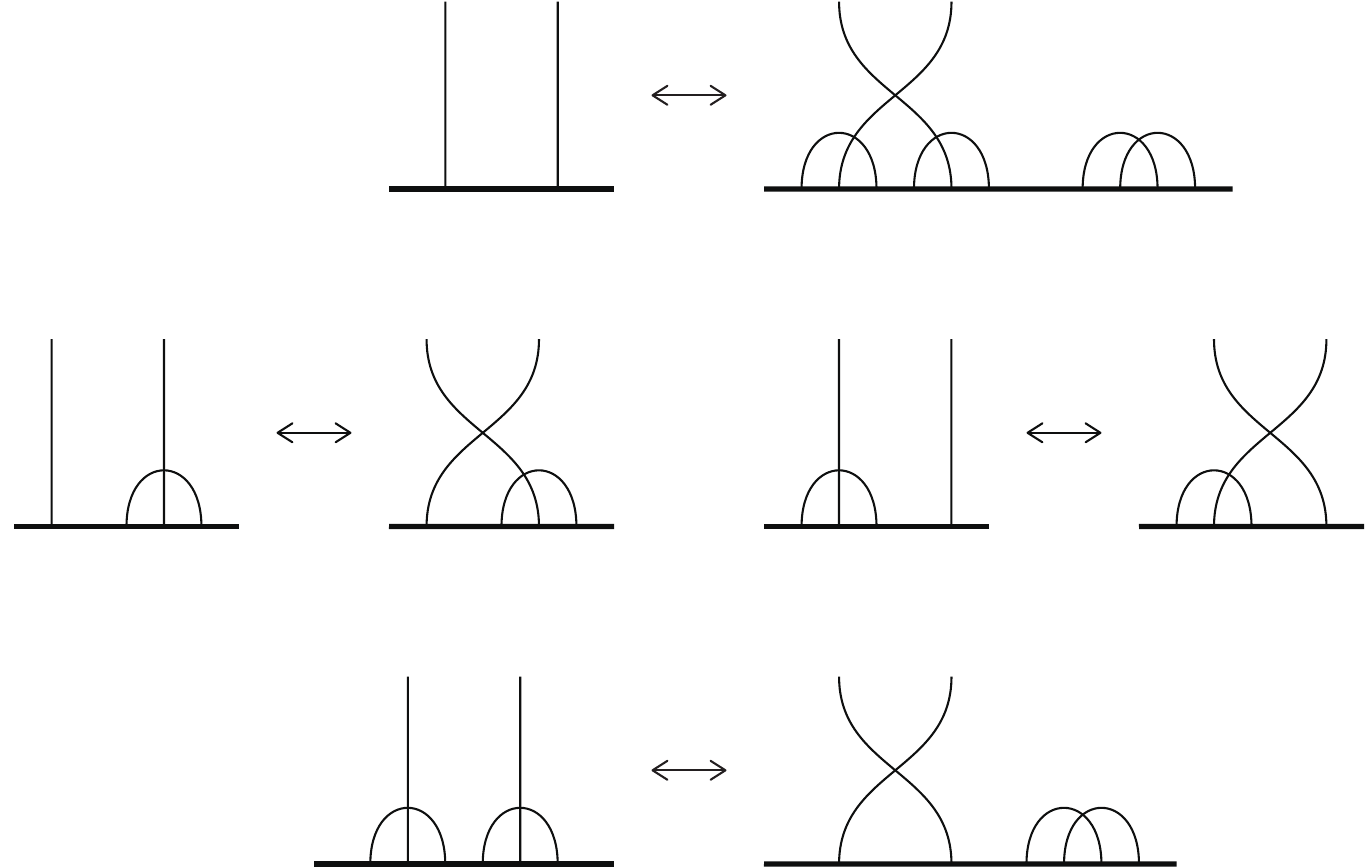}
      \put(166.5,201){$(1)$}
      \put(72.5,116){$(2)$}
      \put(260,116){$(2)$}
      \put(166.5,32){$(3)$}
      \put(197,186){$\e_{1}$}
      \put(241,186){$\e_{2}$}
      \put(259,186){$-\e_{1}$}
      \put(289,186){$-\e_{2}$}
      \put(49,99){$\e$}
      \put(142,99){$\e$}
      \put(198,99){$\e$}
      \put(292,99){$\e$}
      \put(88,17){$\e_{1}$}
      \put(134,17){$\e_{2}$}
      \put(254,18){$\e_{1}$}
      \put(278,18){$\e_{2}$}
    \end{overpic}
  \caption{Deformations (1)--(3) in Lemma~\ref{lem-exchange}}
  \label{exchange}
\end{figure}

\begin{proof}
(3) This is realized by 
two $\Xi$-moves as shown in Figure~\ref{pf-lem-exchange}. 

\begin{figure}[H]
\centering
    \begin{overpic}[width=11cm]{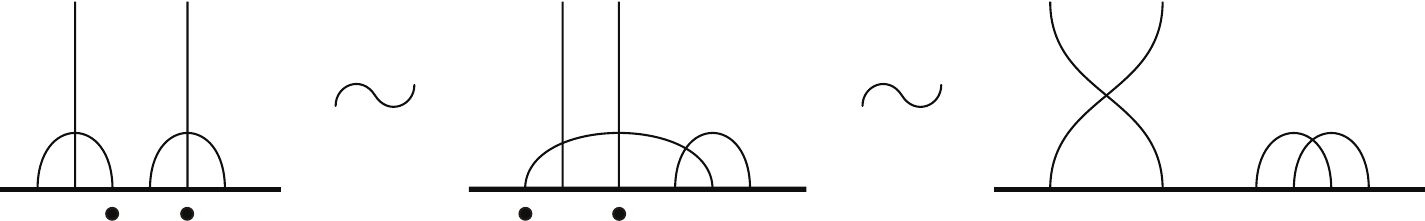}
      \put(79,35){$\Xi$}
      \put(194,35){$\Xi$}
      \put(2,21){$\e_{1}$}
      \put(46,21){$\e_{2}$}
      \put(110,18){$\e_{1}$}
      \put(153,23){$\e_{2}$}
      \put(273,23){$\e_{1}$}
      \put(294,23){$\e_{2}$}
    \end{overpic}
  \caption{Proof of Lemma~\ref{lem-exchange}$(3)$}
  \label{pf-lem-exchange}
\end{figure}

(1) This is obtained by (3) and Lemma~\ref{lem-canceling-pairs} 
for $\e_1=\e_2$, and by
(3) and two R2-moves for $\e_1=-\e_2$. 

(2) This is realized by a single $\Xi$-move. 
\end{proof}

\begin{lemma}\label{lem-3shells}
If two Gauss diagrams are related by a deformation as shown in {\rm Figure~\ref{3shells}}, then they are $\Xi$-equivalent.
\end{lemma}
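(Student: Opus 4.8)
The plan is to realize the move of Figure~\ref{3shells} as a finite composition of the shell manipulations already established --- reorientation of a shell (Figure~\ref{orientation}), Lemma~\ref{lem-sliding} (sliding a shell-pair along a circle), Lemma~\ref{lem-sign} (flipping the sign of a shell at the cost of an extra shell-pair), Lemma~\ref{lem-canceling-pairs} (creating or deleting two consecutive shell-pairs of opposite signs), and the exchange moves $(1)$--$(3)$ of Lemma~\ref{lem-exchange} (moving a shell past the endpoint of another chord) --- together with the Reidemeister moves R1--R3. Each of these is a $\Xi$-equivalence, so once such a sequence is written down the lemma is proved; concretely, as in the proofs of the preceding lemmas, the proof will amount to a single figure of the form of Figure~\ref{pf-lem-sign} or Figure~\ref{pf-lem-canceling-pairs} recording the intermediate Gauss diagrams.

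Carrying this out, I would first use the freedom to reorient shells so that the three shells in the local picture of Figure~\ref{3shells} carry compatible orientations; this is harmless by the remark after Figure~\ref{orientation}. Next, using Lemma~\ref{lem-canceling-pairs} I would insert an auxiliary pair of consecutive shell-pairs of opposite signs adjacent to the three-shell block --- this is the one genuinely creative choice in the argument, the remaining steps being essentially forced once it is made. I would then transport the relevant auxiliary shell-pair into the block with Lemma~\ref{lem-sliding}, use Lemma~\ref{lem-sign} to convert the sign of whichever shell must change (and, where a shell has to cross the endpoint of the chord that makes it a shell, invoke the exchange moves $(1)$--$(3)$ of Lemma~\ref{lem-exchange}), and finally cancel the now-redundant opposite-sign shell-pairs, either via Lemma~\ref{lem-canceling-pairs} again or by reorienting and applying an R2-move. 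The net effect is the transformation asserted in Figure~\ref{3shells}.

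I expect the main obstacle to be purely combinatorial bookkeeping: there is no one-step argument, so the difficulty lies in choosing the auxiliary shell-pairs and the order of the sliding, sign-change, and exchange moves so that every spurious shell cancels at the end, and in tracking the signs and orientations of all the shells through the whole sequence --- in particular, each application of Lemma~\ref{lem-sign} spawns an extra shell-pair that must later be disposed of. Once the correct chain of intermediate Gauss diagrams has been identified, each individual step is a verbatim application of one of the cited lemmas or of a Reidemeister move, so the substance of the proof is precisely the figure exhibiting that chain.
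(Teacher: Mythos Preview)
Your plan misses the point of this lemma entirely: the paper's proof is a single $\Xi$-move followed by a single R1-move, and nothing more. Concretely, among the three consecutive shells in the left-hand side of Figure~\ref{3shells} one can find three consecutive chord-endpoints $P_1,P_2,P_3$ such that swapping $P_1$ and $P_3$ turns one of the shells into a \emph{free} chord (both endpoints adjacent, isolated from the others); an R1-move then deletes it, leaving exactly the right-hand side. That is the whole argument recorded in Figure~\ref{pf-lem-3shells}.

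Your proposal, by contrast, invokes Lemmas~\ref{lem-sliding}, \ref{lem-sign}, \ref{lem-canceling-pairs} and \ref{lem-exchange}, inserts auxiliary shell-pairs, and anticipates nontrivial bookkeeping. You even assert explicitly that ``there is no one-step argument,'' which is false. None of the cited lemmas is needed here; the move of Figure~\ref{3shells} is strictly more elementary than any of them and sits at the same level as the reorientation trick of Figure~\ref{orientation}. Beyond being unnecessarily heavy, your write-up is not actually a proof: it is a description of an intended strategy (``I would first\ldots'', ``I expect the main obstacle\ldots'') without a concrete sequence of diagrams, so even if the strategy could be made to work you have not carried it out. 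The fix is simply to look for the $\Xi$-move that frees one of the three shells.
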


\begin{figure}[htbp]
\centering
    \vspace{1em}
    \begin{overpic}[width=6cm]{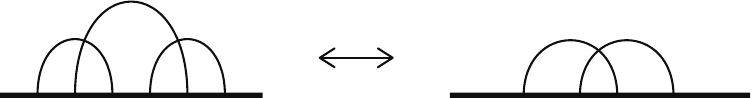}
      \put(5,16){$\e_{1}$}
      \put(26,26){$\e_{2}$}
      \put(45,16){$\e_{3}$}
      \put(119,17){$\e_{1}$}
      \put(144,17){$\e_{3}$}
    \end{overpic}
  \caption{A deformation in Lemma~\ref{lem-3shells}}
  \label{3shells}
\end{figure}

\begin{proof}
This follows by the deformation as shown in Figure~\ref{pf-lem-3shells}. 
\end{proof}

\begin{figure}[htbp]
\centering
    \vspace{1em}
    \begin{overpic}[width=10cm]{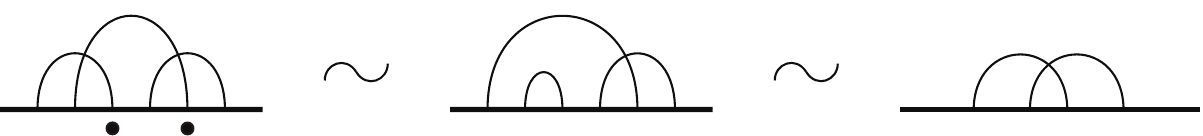}
      \put(5,22){$\e_{1}$}
      \put(27,33){$\e_{2}$}
      \put(48,22){$\e_{3}$}
      \put(82,23){$\Xi$}
      \put(124.5,18.5){$\e_{2}$}
      \put(129,33){$\e_{1}$}
      \put(155,22){$\e_{3}$}
      \put(185.5,23){R1}
      \put(229,22){$\e_{1}$}
      \put(257,22){$\e_{3}$}
    \end{overpic}
  \caption{Proof of Lemma~\ref{lem-3shells}}
  \label{pf-lem-3shells}
\end{figure}

\begin{proposition}\label{prop-selfchord}
Any Gauss diagram of a $2$-component virtual link is $\Xi$-equivalent to a Gauss diagram with two circles $C_{1}$ and $C_{2}$ which satisfies the following conditions. 
\begin{enumerate}
\item[(i)] Any self-chord on $C_i$ is a shell $(i=1,2)$. 
\item[(ii)] All nonself-chords are arranged horizontally such that the nonself-chords of type~$(1,2)$ are placed above those of type~$(2,1)$. 
\end{enumerate}
\end{proposition}

{\rm Figure~\ref{ex-prop}} shows an example of a Gauss diagram satisfying the conditions 
(i) and (ii) in Proposition~\ref{prop-selfchord}. 
We remark that any self-chord is either located around an endpoint of some nonself-chord 
or forms a shell-pair by the conditions.

\begin{figure}[htbp]
\centering
    \begin{overpic}[width=4cm]{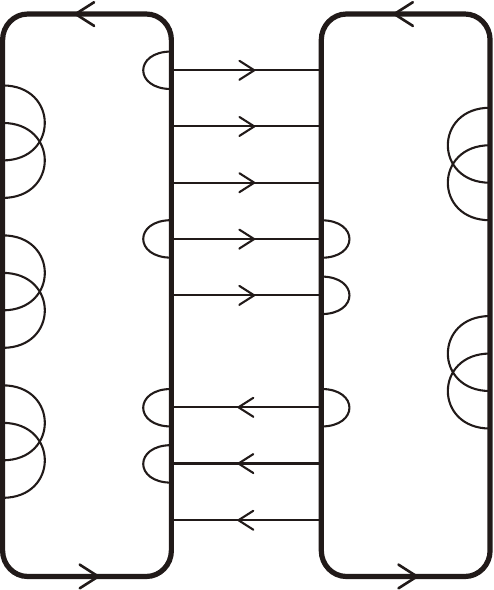}
      \put(-18,65){$C_{1}$}
      \put(119,65){$C_{2}$}
    \end{overpic}
  \caption{A Gauss diagram satisfying Proposition~\ref{prop-selfchord}} 
  \label{ex-prop} 
\end{figure}

\begin{proof}[Proof of {\rm Proposition~\ref{prop-selfchord}}]
In a given Gauss diagram, let $\gamma_1,\dots,\gamma_n$ be 
\emph{all} self-chords on $C_1$, and let $P_i$ and $Q_i$ be the endpoints of $\gamma_i$ 
$(i=1,\dots,n)$. 

We first slide $P_1$ to $Q_1$ along the circle $C_1$ 
by using the deformations (1) and (2) in Lemma~\ref{lem-exchange} 
as follows (see also Figure~\ref{pf-prop-selfchord1}).
Let $X_1,\dots,X_s$ 
be the endpoints of self-/nonself-chords between $P_1$ and $Q_1$ lying in this order. 
First we exchange the positions of $P_1$ and $X_1$ 
by using the deformation (1) which adds a shell at $P_1$, 
a shell at $X_1$, and a shell-pair on $C_1$. 
We may ignore the position of the shell-pair up to $\Xi$-equivalence 
by Lemma~\ref{lem-sliding}, and omit it in Figure~\ref{pf-prop-selfchord1}.  
Next we exchange the positions of $P_1$ and $X_2$ 
by the deformation (2), which removes the shell at $P_1$ 
and adds a shell at $X_2$. 
By applying the deformations (1) and (2) alternately 
to exchange $P_1$ and $X_i$ $(i=3,\dots,s)$, 
$\gamma_1$ is finally deformed into a free chord 
if $s$ is even, or it forms a shell-pair if $s$ is odd. 
For $s$ even, we remove $\gamma_{1}$ by an R1-move. 
may be deformed into a self-chord that is not a shell.

\begin{figure}[htbp]
\centering
    \vspace{1em}
    \begin{overpic}[width=12.5cm]{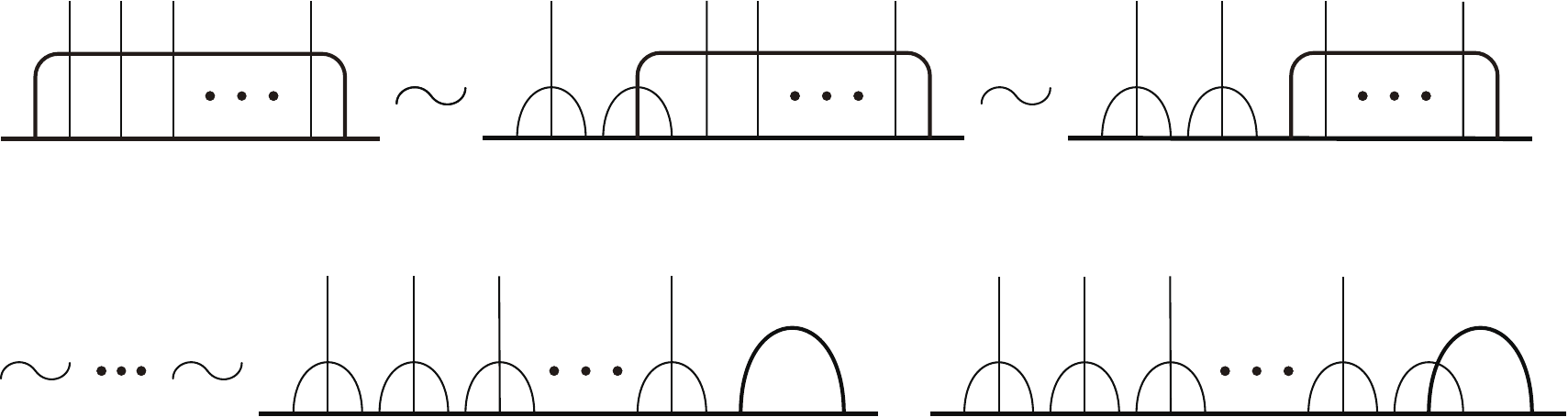}
      \put(92,81){(1)}
      \put(224,81){(2)}
      \put(2,88){$\gamma_{1}$}
      \put(0,51){$P_{1}$}
      \put(11,51){$X_{1}$}
      \put(24,51){$X_{2}$}
      \put(37,51){$X_{3}$}
      \put(51,54){$\dots$}
      \put(63.5,51){$X_{s}$}
      \put(76,51){$Q_{1}$}
      \put(142,87){$\gamma_{1}$}
      \put(120,51){$X_{1}$}
      \put(140,51){$P_{1}$}
      \put(155,51){$X_{2}$}
      \put(169,51){$X_{3}$}
      \put(184,54){$\dots$}
      \put(197,51){$X_{s}$}
      \put(210,51){$Q_{1}$}
      \put(290,87){$\gamma_{1}$}
      \put(253,51){$X_{1}$}
      \put(272,51){$X_{2}$}
      \put(287,51){$P_{1}$}
      \put(298,51){$X_{3}$}
      \put(313,54){$\dots$}
      \put(326,51){$X_{s}$}
      \put(340,51){$Q_{1}$}
      
      \put(2,19){(1)}
      \put(175,25){$\gamma_{1}$}
      \put(110,-28){($s$ is even)}
      \put(69,-12){$X_{1}$}
      \put(89,-12){$X_{2}$}
      \put(108,-12){$X_{3}$}
      \put(129,-9){$\dots$}
      \put(147,-12){$X_{s}$}
      \put(164,-12){$P_{1}$}
      \put(186,-12){$Q_{1}$}
      \put(201,7){or}
      \put(332,25){$\gamma_{1}$}
      \put(264,-28){($s$ is odd)}
      \put(221,-12){$X_{1}$}
      \put(241,-12){$X_{2}$}
      \put(261,-12){$X_{3}$}
      \put(281,-9){$\dots$}
      \put(300,-12){$X_{s}$}
      \put(319.5,-12){$P_{1}$}
      \put(343,-12){$Q_{1}$}
    \end{overpic}
  \vspace{2em}
  \caption{Sliding $P_{1}$ to $Q_{1}$ along $C_{1}$}
  \label{pf-prop-selfchord1}
\end{figure}

In the obtained Gauss diagram, 
let $Y_1,\dots,Y_t$ be the endpoints of chords 
between $P_2$ and $Q_2$ except for the endpoints of shells 
produced in the above sliding process of $P_1$ to $Q_1$. 
Similarly to the sliding of $P_1$ to $Q_1$, 
we slide $P_2$ to $Q_2$ along $C_1$ 
by exchanging $P_2$ and $Y_1,\dots,Y_t$ step by step. 
Since $P_{2}$, $Q_{2}$, and $Y_{i}$ may have shells, 
we shall use the deformation (3) in Lemma~\ref{lem-exchange} in addition to 
the deformations (1) and (2) as follows. 
If both $P_2$ and $Y_i$ have shells for some $i$,  
we exchange their positions by using the deformation (3). 
This removes the shells at $P_2$ and at $Y_i$, 
and adds a shell-pair on $C_1$. 
Likewise, if $P_2$ has no shell but $Y_i$ does 
(resp. both $P_{2}$ and $Y_{i}$ have no shells) 
for some $i$,  
we exchange their positions by using the deformation~(2) 
(resp. deformation~(1)). 
Applying this sliding process of $P_{2}$ to $Q_{2}$ leads to four cases 
depending on 
the number of shells at $P_2$ and $Q_2$ 
as shown in Figure~\ref{pf-prop-selfchord2}. 
In the first case of the figure, $\gamma_{2}$ is a free chord and we remove it by 
an R1-move. 
In the second and third cases, we have a shell-pair. 
In the last case, 
we use Lemma~\ref{lem-3shells} to obtain 
a shell-pair. 
For example, we consider the case $t=5$ 
such that $P_{2}$, $Q_{2}$, $Y_{1}$, $Y_{2}$, and $Y_{3}$ have shells 
as shown in the top-left of Figure~\ref{pf-prop-selfchord3}. 
By the sliding process of $P_{2}$ to $Q_{2}$, 
the self-chord $\gamma_{2}$ is deformed into the third cases of Figure~\ref{pf-prop-selfchord2} 
by applying the deformations (3), (2), (3), (1), and (2) in this order. 
Furthermore, $Y_{1}$, $Y_{2}$, and $Y_{3}$ lose the shells, and $Y_{4}$ and $Y_{5}$ get shells.

\begin{figure}[htbp]
\centering
    \vspace{1em}
    \begin{overpic}[width=10cm]{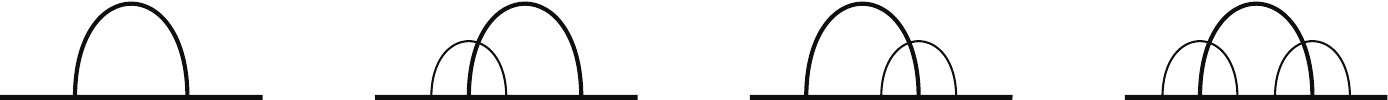}
      \put(22,26){$\gamma_{2}$}
      \put(9,-12){$P_{2}$}
      \put(35,-12){$Q_{2}$}
      \put(104,26){$\gamma_{2}$}
      \put(91,-12){$P_{2}$}
      \put(117,-12){$Q_{2}$}
      \put(172,26){$\gamma_{2}$}
      \put(159,-12){$P_{2}$}
      \put(185,-12){$Q_{2}$}
      \put(253,26){$\gamma_{2}$}
      \put(240,-12){$P_{2}$}
      \put(266,-12){$Q_{2}$}
    \end{overpic}
  \vspace{1em}
  \caption{Four possible cases}
  \label{pf-prop-selfchord2}
\end{figure}

\begin{figure}[htbp]
\centering
    \begin{overpic}[width=12cm]{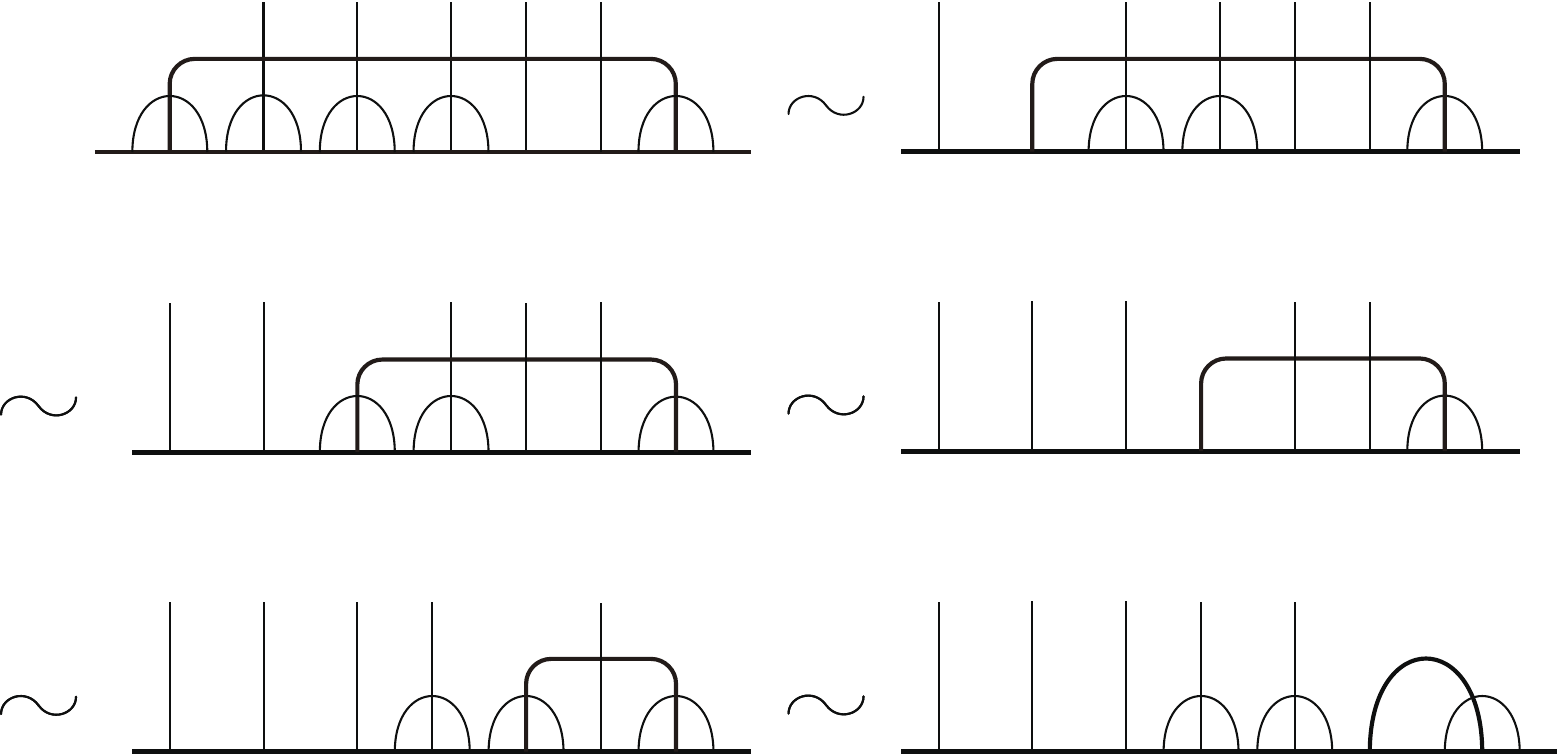}
      \put(175,151){(3)}
      \put(2,85){(2)}
      \put(175,85){(3)}
      \put(2,19){(1)}
      \put(175,19){(2)}
      \put(34,158){$\gamma_{2}$}
      \put(32,118){$P_{2}$}
      \put(54,118){$Y_{1}$}
      \put(75,118){$Y_{2}$}
      \put(95,118){$Y_{3}$}
      \put(112,118){$Y_{4}$}
      \put(128,118){$Y_{5}$}
      \put(144,118){$Q_{2}$}
      \put(223,158){$\gamma_{2}$}
      \put(202,118){$Y_{1}$}
      \put(221,118){$P_{2}$}
      \put(242,118){$Y_{2}$}
      \put(262,118){$Y_{3}$}
      \put(280,118){$Y_{4}$}
      \put(296,118){$Y_{5}$}
      \put(313,118){$Q_{2}$}
      \put(75,92){$\gamma_{2}$}
      \put(33,53){$Y_{1}$}
      \put(54,53){$Y_{2}$}
      \put(73,53){$P_{2}$}
      \put(95,53){$Y_{3}$}
      \put(112,53){$Y_{4}$}
      \put(128,53){$Y_{5}$}
      \put(144,53){$Q_{2}$}
      \put(262,92){$\gamma_{2}$}
      \put(202,53){$Y_{1}$}
      \put(222,53){$Y_{2}$}
      \put(242,53){$Y_{3}$}
      \put(260,53){$P_{2}$}
      \put(280,53){$Y_{4}$}
      \put(296,53){$Y_{5}$}
      \put(313,53){$Q_{2}$}
      \put(112,26){$\gamma_{2}$}
      \put(33,-12){$Y_{1}$}
      \put(54,-12){$Y_{2}$}
      \put(74,-12){$Y_{3}$}
      \put(91,-12){$Y_{4}$}
      \put(110,-12){$P_{2}$}
      \put(128,-12){$Y_{5}$}
      \put(144,-12){$Q_{2}$}
      \put(307,27){$\gamma_{2}$}
      \put(202,-12){$Y_{1}$}
      \put(222,-12){$Y_{2}$}
      \put(242,-12){$Y_{3}$}
      \put(260,-12){$Y_{4}$}
      \put(280,-12){$Y_{5}$}
      \put(296,-12){$P_{2}$}
      \put(320,-12){$Q_{2}$}
    \end{overpic}
  \vspace{1em}
  \caption{An example of the sliding process of $P_{2}$ to $Q_{2}$}
  \label{pf-prop-selfchord3}
\end{figure}

We slide $P_i$ to $Q_i$ for $i=3,\dots,n$ similarly to the case $i=2$ 
to obtain a Gauss diagram where 
any self-chord on $C_1$ is a shell. 
We perform a similar deformation for 
the self-chords on $C_2$ 
to obtain a Gauss diagram satisfying the condition (i).

Similarly to the deformations of self-chords as above, 
we deform the nonself-chords by Lemma~\ref{lem-exchange} 
so that the obtained Gauss diagram satisfies the condition (ii) in addition to (i). 
\end{proof}

\section{Ladders}\label{sec-standardform} 

In this section, we introduce the notion of ladders 
and give a representative of the $\Xi$-equivalence class of a 
$2$-component virtual link (Proposition~\ref{prop-even}). 

The sign of a shell can be altered with making a shell-pair as follows. 

\begin{lemma}\label{lem-sign}
If two Gauss diagrams are related by a deformation as shown in {\rm Figure~\ref{sign-shell}}, then they are $\Xi$-equivalent. 
\end{lemma} 

\begin{figure}[htbp]
\centering
    \begin{overpic}[width=6cm]{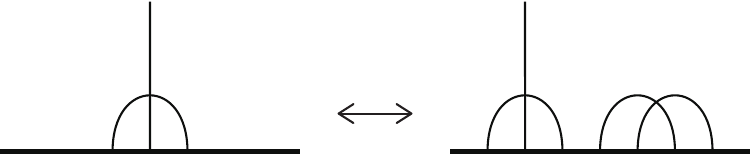}
      \put(22,13){$\e$}
      \put(100,13){$-\e$}
      \put(134,13){$\e$}
      \put(160,13){$\e$}
    \end{overpic}
  \caption{Altering the sign of a shell with a shell-pair}
  \label{sign-shell}
\end{figure}

\begin{proof}
This follows by the deformation 
as shown in Figure~\ref{pf-lem-sign}. 
\end{proof}

\begin{figure}[htbp]
\centering
    \begin{overpic}[width=12cm]{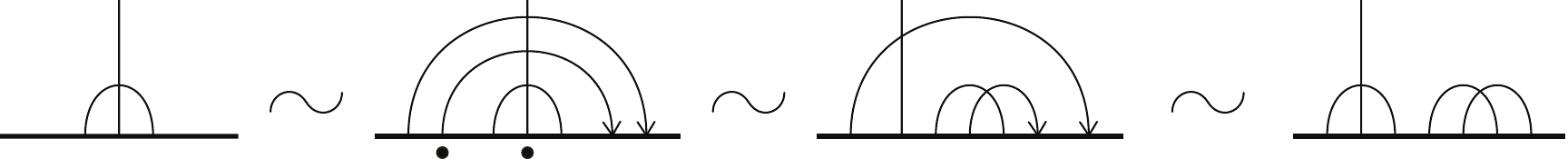}
      \put(60.5,20){R2}
      \put(160,20){$\Xi$}
      \put(246,20){Lem~\ref{lem-sliding}}
      \put(15,15){$\e$}
      \put(89,30){$-\e$}
      \put(97,19.3){$\e$}
      \put(105,15){$\e$}
      \put(177,24){$-\e$}
      \put(201,15){$\e$}
      \put(225,15){$\e$}
      \put(276,10){$-\e$}
      \put(308,15){$\e$}
      \put(333,15){$\e$}
    \end{overpic}
  \caption{Proof of Lemma~\ref{lem-sign}}
  \label{pf-lem-sign}
\end{figure}

By Lemmas~\ref{lem-sliding} and~\ref{lem-sign}, we may ignore the signs of shells and the positions of shell-pairs up to $\Xi$-equivalence, and we will often omit them in figures in the rest of this section. 

We now consider a portion of a Gauss diagram 
consisting of two parallel arcs  on $C_1$ and $C_2$ such that 
$C_1$ is oriented upwards and $C_2$ is oriented downwards together with
horizontal nonself-chords of type $(1,2)$ possibly with shells.
Such a portion of the Gauss diagram is called a {\it $(1,2)$-ladder}. 
Figure~\ref{ex-product} shows 
an example of a $(1,2)$-ladder 
with five nonself-chords.

\begin{figure}[htbp]
\centering
    \vspace{1em}
    \begin{overpic}[width=1.75cm]{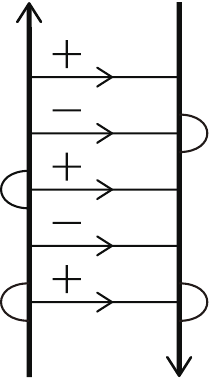}
      \put(1.5,95){$C_{1}$}
      \put(38,95){$C_{2}$}
    \end{overpic}
  
  \caption{The $(1,2)$-ladder $A_{1}A_{2}^{-1}A_{3}A_{1}^{-1}A_{4}$}
  \label{ex-product}
\end{figure}

In a $(1,2)$-ladder, 
the nonself-chords of type~$(1,2)$ 
are divided into eight classes 
labeled $A_{1}^{\e},A_{2}^{\e},A_{3}^{\e}$, and $A_{4}^{\e}$ $(\e=\pm1)$ 
as shown in Figure~\ref{labels}, 
where $\varepsilon$ is the sign of a nonself-chord. 
More precisely, a chord labeled $A_{1}^{+1}$ or $A_{1}^{-1}$ 
has no shells. 
On the other hand, 
a chord labeled $A_{2}^{+1}$ or $A_{2}^{-1}$ 
(resp. $A_{3}^{+1}$ or $A_{3}^{-1}$) has a shell 
at the endpoint on $C_2$ (resp. $C_1$), 
and a chord labeled $A_{4}^{+1}$ or $A_{4}^{-1}$ 
has a pair of shells at both endpoints. 
Then the word on the letters $\{A_{1},A_{2},A_{3},A_{4}\}$ of a $(1,2)$-ladder is obtained by reading the labels of nonself-chords in the ladder from top to bottom. 
For example, the $(1,2)$-ladder in Figure~\ref{ex-product} 
is expressed by $A_{1}A_{2}^{-1}A_{3}A_{1}^{-1}A_{4}$, where 
we abbreviate $X^{+1}$ to $X$ simply 
for $X=A_{1},A_{2},A_{3},A_{4}$. 
In what follows, we identify a $(1,2)$-ladder and its word on $\{A_{1},A_{2},A_{3},A_{4}\}$.

\begin{figure}[htbp]
\centering
    \vspace{1em}
    \begin{overpic}[width=11cm]{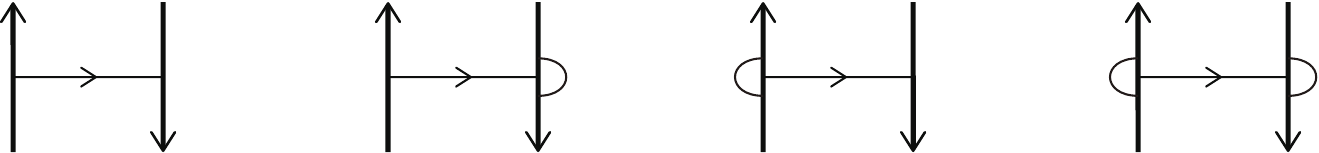}
      \put(-2,41){$C_{1}$}
      \put(34,41){$C_{2}$}
      \put(19,23){$\e$}
      \put(16,-14){$A_{1}^{\e}$}
      \put(87,41){$C_{1}$}
      \put(123,41){$C_{2}$}
      \put(108,23){$\e$}
      \put(105,-14){$A_{2}^{\e}$}
      \put(176,41){$C_{1}$}
      \put(212,41){$C_{2}$}
      \put(197,23){$\e$}
      \put(194,-14){$A_{3}^{\e}$}
      \put(265,41){$C_{1}$}
      \put(301,41){$C_{2}$}
      \put(286,23){$\e$}
      \put(283,-14){$A_{4}^{\e}$}
    \end{overpic}
  \vspace{1em}
  \caption{The labels $A_{1}^{\e},A_{2}^{\e},A_{3}^{\e}$ and $A_{4}^{\e}$ of nonself-chords of type~$(1,2)$}
  \label{labels}
\end{figure}

We study the $\Xi$-equivalence classes 
of $(1,2)$-ladders in Lemmas~\ref{lem-inverse}--\ref{lem-nonself-chord}. 
Let $\emptyset$ denote the empty word 
expressing the ladder with no chord. 
We remark that the product of $\emptyset$ and 
any word $W$ on $\{A_{1},A_{2},A_{3},A_{4}\}$
is equal to $W$ itself by definition. 

\begin{lemma}\label{lem-inverse}
For $i\in \{1,2,3,4\}$,  we have the $\Xi$-equivalence 
$$A_iA_i^{-1}\sim A_i^{-1}A_i\sim\emptyset$$ 
up to shell-pairs. 
\end{lemma}

\begin{proof}
If $i=1$, then $A_{1}A_{1}^{-1}$ and $A_{1}^{-1}A_{1}$ are related to $\emptyset$ by R2-moves. 
Figure~\ref{pf-lem-inverse} shows the proofs for $i=2, 4$. 
The proof for $i=3$ is similar to that for $i=2$. 
\end{proof}

\begin{figure}[htbp]
\centering
    \begin{overpic}[width=12.5cm]{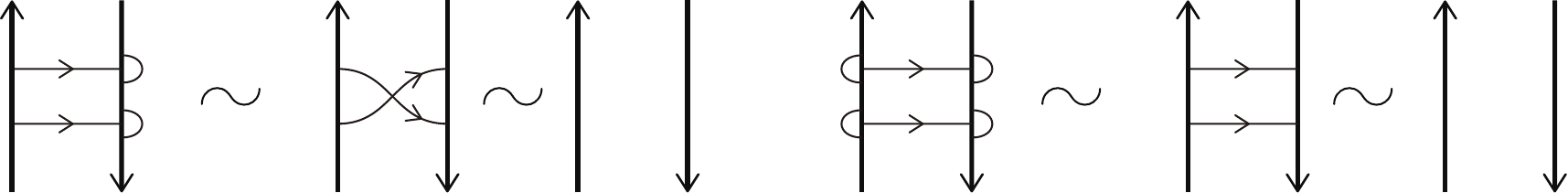}
      \put(1,-15){$A_{2}^{\e}A_{2}^{-\e}$}
      \put(142,-15){$\emptyset$}
      \put(36.5,28){Lem~\ref{lem-exchange}}
      \put(110,28){R2}
      \put(13,33){$\e$}
      \put(9,6){$-\e$}
      \put(81,31){$\e$}
      \put(78,8){$-\e$}
      \put(195,-15){$A_{4}^{\e}A_{4}^{-\e}$}
      \put(338,-15){$\emptyset$}
      \put(229.5,28){Lem~\ref{lem-exchange}}
      \put(303,28){R2}
      \put(206,33){$\e$}
      \put(202,6){$-\e$}
      \put(276,33){$-\e$}
      \put(280,6){$\e$}
    \end{overpic}
  \vspace{1em}
  \caption{Proofs of Lemma~\ref{lem-inverse} for $i=2,4$}
  \label{pf-lem-inverse}
\end{figure}

By Lemma~\ref{lem-inverse}, 
if two words $W$ and $V$ on $\{A_{1},A_{2},A_{3},A_{4}\}$ 
satisfy $WV\sim\emptyset$, 
then we have $$W\sim W(VV^{-1})
\sim(WV)V^{-1}\sim V^{-1}$$ and $V\sim W^{-1}$, 
where $W^{-1}$ and $V^{-1}$ are the inverse words of $W$ and $V$, respectively. 
We remark that the $\Xi$-equivalence class of any $(1,2)$-ladder up to shell-pairs 
by an element of the free group generated by $\{A_{1},A_{2},A_{3},A_{4}\}$. 

\begin{lemma}\label{lem-commutability}
We have the following $\Xi$-equivalence up to shell-pairs. 
\begin{enumerate}
\item[(i)] $A_{1}A_{2}\sim A_{2}A_{1}\sim A_{3}A_{4}\sim A_{4}A_{3}$.  
\item[(ii)] $A_{1}A_{3}\sim A_{3}A_{1}\sim A_{2}A_{4}\sim A_{4}A_{2}$. 
\item[(iii)] $A_{1}A_{4}\sim A_{4}A_{1}$ and $A_{2}A_{3}\sim A_{3}A_{2}$. 
\end{enumerate}
In particular, the letters $A_{1}$, $A_{2}$, $A_{3}$, and $A_{4}$ 
are mutually commutative. 
\end{lemma}

\begin{proof}
(i) Figure~\ref{pf-lem-commutability} shows the proof. 
We remark that the R3-move in the figure 
is the same as the one used at the top  of Figure~\ref{R3-Gauss}.

\begin{figure}[htbp]
\centering
    \begin{overpic}[width=12cm]{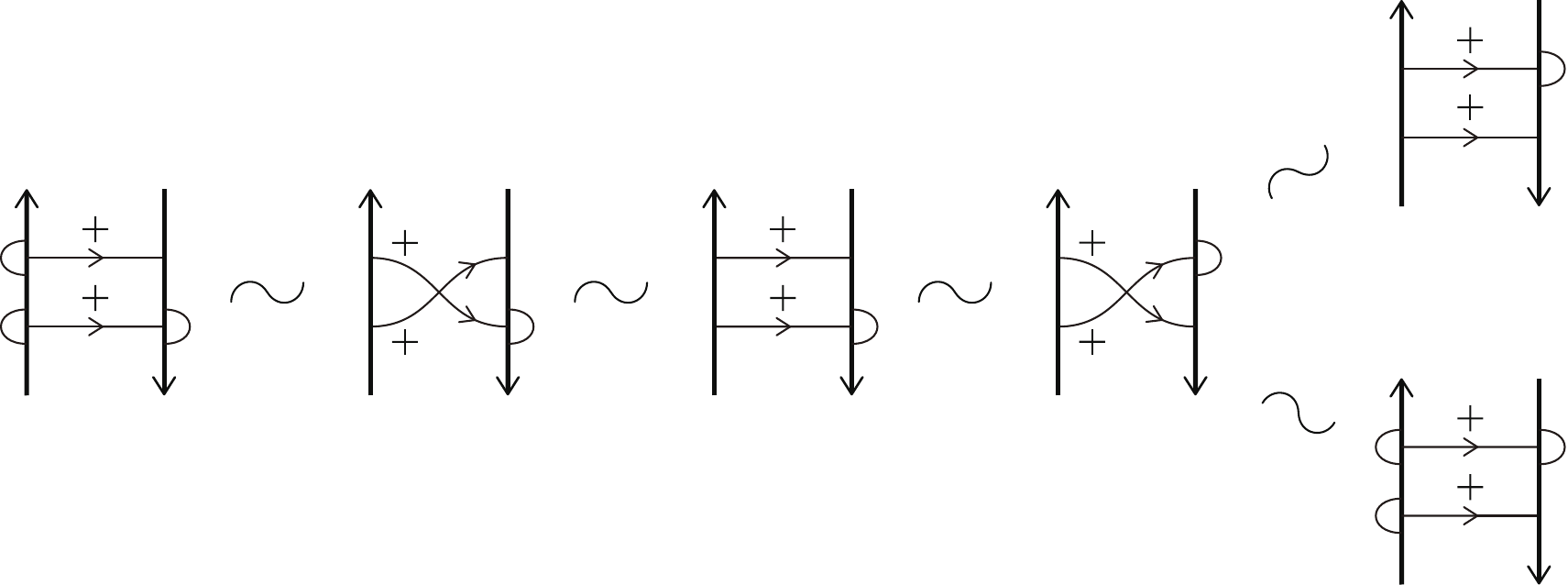}
      \put(40.5,70){Lem~\ref{lem-exchange}}
      \put(116,70){Lem~\ref{lem-exchange}}
      \put(202,70){R3}
      \put(255,100){Lem~\ref{lem-exchange}}
      \put(255,20){Lem~\ref{lem-exchange}}
      \put(10,27){$A_{3}A_{4}$}
      \put(160,27){$A_{1}A_{2}$}
      \put(310,69){$A_{2}A_{1}$}
      \put(310,-13){$A_{4}A_{3}$}
    \end{overpic}
  \caption{Proof of Lemma~\ref{lem-commutability}(i)}
  \label{pf-lem-commutability}
\end{figure}

(ii) The proof is similar to that of (i) 
by exchanging $A_{2}$ and $A_{3}$.

(iii) It holds that $A_{4}\sim A_{3}^{-1}A_{2}A_{1}$ by $A_{3}A_{4}\sim A_{2}A_{1}$ in (i) and Lemma~\ref{lem-inverse}, 
$A_{1}A_{3}^{-1}\sim A_{3}^{-1}A_{1}$ by $A_{1}A_{3}\sim A_{3}A_{1}$ in (ii), and 
$A_{1}A_{2}\sim A_{2}A_{1}$ in (i). 
Therefore we have 
\[
\begin{split}
&A_1A_4\sim A_1(A_3^{-1}A_2A_1)= (A_1A_3^{-1})A_2A_1\sim (A_3^{-1}A_1)A_2A_1 \\
& = A_3^{-1}(A_1A_2)A_1
\sim A_3^{-1}(A_2A_1)A_1=(A_3^{-1}A_2A_1)A_1 \sim A_4A_1. 
\end{split}
\]
Similarly, it holds that $A_{3}\sim A_{4}^{-1}A_{1}A_{2}$ by $A_{4}A_{3}\sim A_{1}A_{2}$ in (i), 
$A_{2}A_{4}^{-1}\sim A_{4}^{-1}A_{2}$ by $A_{2}A_{4}\sim A_{4}A_{2}$ in (ii), and 
$A_{1}A_{2}\sim A_{2}A_{1}$ in (i). 
Therefore we have 
\[
A_{2}A_{3}\sim A_{2}A_{4}^{-1}A_{1}A_{2}\sim A_{4}^{-1}A_{2}A_{1}A_{2}\sim A_{4}^{-1}A_{1}A_{2}A_{2}\sim A_{3}A_{2}. 
\] 
\end{proof}

\begin{lemma}\label{lem-square}
We have the $\Xi$-equivalence $A_{2}^{2}\sim A_{3}^{2}$ up to shell-pairs. 
\end{lemma}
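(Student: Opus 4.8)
The plan is to derive this relation purely algebraically from the commutation relations of Lemma~\ref{lem-commutability} and the cancellation relations of Lemma~\ref{lem-inverse}, exactly as in the proof of Lemma~\ref{lem-commutability}(iii). The point to record first is that Lemma~\ref{lem-commutability} in fact provides commutation of \emph{all} six pairs among $A,B,C,D$ up to shell-pairs: $AB\sim BA$ and $CD\sim DC$ from (i), $AC\sim CA$ and $BD\sim DB$ from (ii), and $AD\sim DA$, $BC\sim CB$ from (iii). Combined with Lemma~\ref{lem-inverse}, which lets one freely insert or delete $XX^{-1}$ and $X^{-1}X$, this means we may manipulate portions on $A,B,C,D$ up to shell-pairs as if working in an abelian group subject to the two relations $AB\sim CD$ and $AC\sim BD$.

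Concretely, from $AB\sim CD$ (Lemma~\ref{lem-commutability}(i)), juxtaposing an $A^{-1}$ on the left and cancelling $A^{-1}A$ via Lemma~\ref{lem-inverse} gives $B\sim A^{-1}CD$ up to shell-pairs; likewise, from $AC\sim BD$ (Lemma~\ref{lem-commutability}(ii)), juxtaposing a $D^{-1}$ on the right and cancelling $DD^{-1}$ gives $B\sim ACD^{-1}$ up to shell-pairs. Substituting the first expression for the leading $B$ and the second for the trailing $B$ in $B^{2}=BB$, we get
\[
B^{2}\sim(A^{-1}CD)(ACD^{-1})=A^{-1}CDACD^{-1}
\]
up to shell-pairs. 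Now slide the letter $A$ leftwards past $D$ and then past $C$ using $AD\sim DA$ and $AC\sim CA$, cancel the resulting $A^{-1}A$, slide a $C$ past a $D$ using $CD\sim DC$, and cancel $DD^{-1}$:
\[
A^{-1}CDACD^{-1}\sim A^{-1}ACDCD^{-1}\sim CDCD^{-1}\sim CCDD^{-1}\sim CC=C^{2}
\]
up to shell-pairs. Chaining these equivalences yields $B^{2}\sim C^{2}$ up to shell-pairs.

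There is no genuinely hard step; the only thing needing care is that every rewrite above is a \emph{local} modification of the portion concerned — the equivalences $B\sim A^{-1}CD$, $B\sim ACD^{-1}$, and each commutation are realized by sequences of Reidemeister and $\Xi$-moves (together with insertions of shell-pairs) supported on the arcs occupied by the chords being moved — so they remain valid inside a larger Gauss diagram, and $\Xi$-equivalence up to shell-pairs is transitive and compatible with juxtaposition. I would also note in passing that the dual computation, expressing a doubled $A$ (resp.\ $D$) through the same two relations and collapsing, gives $A^{2}\sim D^{2}$ up to shell-pairs; this is consistent with the observation that the relations $AB\sim CD$ and $AC\sim BD$ only force $B^{2}C^{-2}$ and $A^{2}D^{-2}$ — not $BC^{-1}$ or $AD^{-1}$ — to become trivial, which is why the statement is quadratic rather than linear.
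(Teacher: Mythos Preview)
Your argument is correct: Lemmas~\ref{lem-inverse} and~\ref{lem-commutability} together make the monoid of portions on $A,B,C,D$ (up to shell-pairs) into an abelian group modulo the relations $AB\sim CD$ and $AC\sim BD$, and you have cleanly shown that $B^{2}C^{-2}$ lies in the subgroup these relations generate. The locality remark is well taken and is exactly what is implicitly used throughout the paper (for instance in the derivation of Lemma~\ref{lem-commutability}(iii) and later in Lemma~\ref{lem-nonself-chord}).

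The paper, however, does something more direct: it proves $B^{2}\sim C^{2}$ pictorially in two steps using Lemma~\ref{lem-exchange}, passing through an intermediate configuration and never invoking Lemma~\ref{lem-commutability} at all. So where you deduce the relation as a formal consequence of the abelian structure already established, the paper exhibits an explicit short sequence of local moves at the Gauss diagram level. Your route has the conceptual payoff of showing that $B^{2}\sim C^{2}$ is \emph{not} an independent relation (and, as you note, the same mechanism yields $A^{2}\sim D^{2}$); the paper's route is shorter and keeps the geometry visible, which matters because Lemma~\ref{lem-exchange} is the workhorse and a direct appeal to it avoids the chain of cancellations and commutations you need. Either proof slots into the paper without change to the downstream arguments.
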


\begin{proof}
This follows by the deformation 
as shown in Figure~\ref{pf-lem-square}.
\end{proof}

\begin{figure}[htbp]
\centering
    \begin{overpic}[width=7cm]{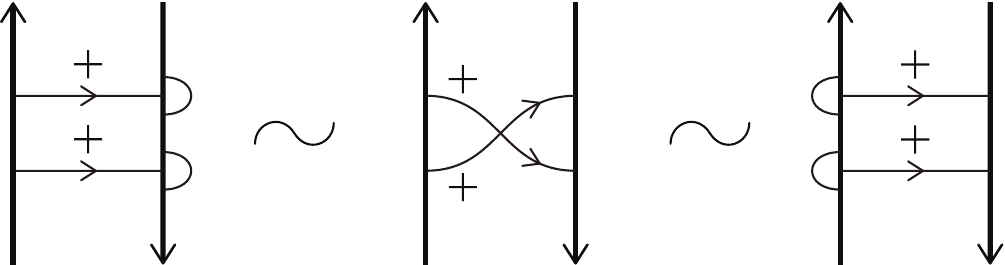}
      \put(12,-14){$A_{2}^{2}$}
      \put(177,-14){$A_{3}^{2}$}
      \put(43,32){Lem~\ref{lem-exchange}}
      \put(121,32){Lem~\ref{lem-exchange}}
    \end{overpic}
  \vspace{1em}
  \caption{Proof of Lemma~\ref{lem-square}}
  \label{pf-lem-square}
\end{figure}

\begin{lemma}\label{lem-nonself-chord}
Any $(1,2)$-ladder is $\Xi$-equivalent to either $A_{1}^{a_{1}}A_{2}^{a_{2}}$ or $A_{1}^{a_{1}}A_{2}^{a_{2}}A_{3}$ for some $a_{1},a_{2}\in\mathbb{Z}$ up to  shell-pairs. 
\end{lemma}

\begin{proof}
By Lemmas~\ref{lem-inverse}, \ref{lem-commutability}, and \ref{lem-square}, 
the $\Xi$-equivalence class of any $(1,2)$-ladder up to shell-pairs
is regarded as an element in 
the abelian group presented by 
$$\left\langle A_{1}, A_{2}, A_{3}, A_{4}\left| 
\begin{array}{l}
A_iA_j=A_jA_i \ (1\leq i<j\leq 4), \\
A_1A_2=A_3A_4, \ A_1A_3=A_2A_4, \ A_2^2=A_3^2
\end{array}\right.
\right\rangle.$$
Since this group is isomorphic to 
$$\langle A_{1},A_{2},A_{3}\mid 
A_iA_j=A_jA_i \ (1\leq i<j\leq 3), \ 
A_{2}^2=A_{3}^2\rangle
\cong\mathbb{Z}\times\mathbb{Z}\times(\mathbb{Z}/2\mathbb{Z}),$$ 
we have the conclusion. 
\end{proof}

Similarly to a $(1,2)$-ladder, 
we define a {\it $(2,1)$-ladder} as a portion 
consisting of nonself-chords of type $(2,1)$, 
which are labeled $B_{1}^{\varepsilon}$, $B_{2}^{\varepsilon}$, 
$B_{3}^{\varepsilon}$, and $B_{4}^{\varepsilon}$ $(\varepsilon=\pm 1)$ 
as shown in Figure~\ref{labels-hat}. 
Then we have similar properties to Lemmas~\ref{lem-inverse}--\ref{lem-nonself-chord} 
as follows.

\begin{figure}[htbp]
\centering
    \vspace{1em}
    \begin{overpic}[width=11cm]{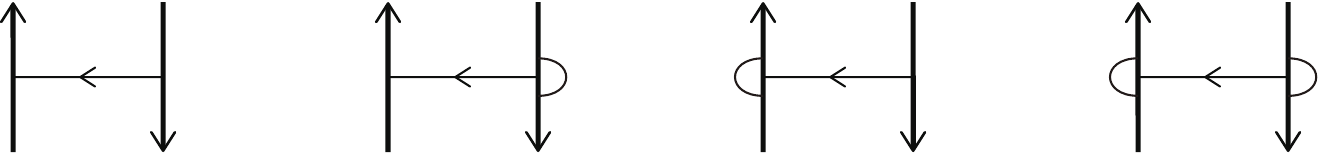}
      \put(-2,41){$C_{1}$}
      \put(34,41){$C_{2}$}
      \put(19,23){$\e$}
      \put(17,-14){$B_{1}^{\e}$}
      \put(87,41){$C_{1}$}
      \put(123,41){$C_{2}$}
      \put(108,23){$\e$}
      \put(106,-14){$B_{2}^{\e}$}
      \put(176,41){$C_{1}$}
      \put(212,41){$C_{2}$}
      \put(197,23){$\e$}
      \put(195,-14){$B_{3}^{\e}$}
      \put(265,41){$C_{1}$}
      \put(301,41){$C_{2}$}
      \put(286,23){$\e$}
      \put(284,-14){$B_{4}^{\e}$}
    \end{overpic}
  \vspace{1em}
  \caption{The labels $B_{1}^{\e},B_{2}^{\e},B_{3}^{\e}$ and $B_{4}^{\e}$ of nonself-chords of type~$(2,1)$}
  \label{labels-hat}
\end{figure}

\begin{lemma}\label{lem-inverse2}
For $i\in \{1,2,3,4\}$,  
we have the $\Xi$-equivalence 
$$B_iB_i^{-1}\sim B_i^{-1}B_i\sim\emptyset$$ 
up to shell-pairs. 
\hfill$\Box$
\end{lemma}

\begin{lemma}\label{lem-commutability2}
We have the following $\Xi$-equivalence up to shell-pairs. 
\begin{enumerate}
\item[(i)] $B_{1}B_{2}\sim B_{2}B_{1}\sim B_{3}B_{4}\sim B_{4}B_{3}$.  
\item[(ii)] $B_{1}B_{3}\sim B_{3}B_{1}\sim B_{2}B_{4}\sim B_{4}B_{2}$. 
\item[(iii)] $B_{1}B_{4}\sim B_{4}B_{1}$ and $B_{2}B_{3}\sim B_{3}B_{2}$. 
\end{enumerate}
In particular, the letters $B_{1}$, $B_{2}$, $B_{3}$, and $B_{4}$ 
are mutually commutative. 
\hfill$\Box$
\end{lemma}

\begin{lemma}\label{lem-square2}
We have the $\Xi$-equivalence $B_{2}^{2}\sim B_{3}^{2}$ up to shell-pairs. 
\hfill$\Box$
\end{lemma}

\begin{lemma}\label{lem-nonself-chord2}
Any $(2,1)$-ladder is $\Xi$-equivalent to either 
$B_{1}^{b_{1}}B_{2}^{b_{2}}$ or $B_{1}^{b_{1}}B_{2}^{b_{2}}B_{3}$ for some $b_{1},b_{2}\in\mathbb{Z}$ up to  shell-pairs. 
\hfill$\Box$
\end{lemma}

For a $(1,2)$-ladder $W$ and a $(2,1)$-ladder $V$, 
we consider the ladder $WV$ obtained by stacking $W$ on top of $V$, as for example on the left-hand side of Figure \ref{pf-lem-C}. 
Then we have the following.

\begin{lemma}\label{lem-C}
We have the $\Xi$-equivalence
$A_{2}B_{2}\sim A_{3}B_{3}$ up to shell-pairs. 
\end{lemma}

\begin{proof}
This follows by the deformation 
as shown in Figure~\ref{pf-lem-C}. 
We remark that the R3-move in the figure 
is the same as the one used at the bottom 
of Figure~\ref{R3-Gauss}. 
\end{proof}

\begin{figure}[htbp]
\centering
    \begin{overpic}[width=12cm]{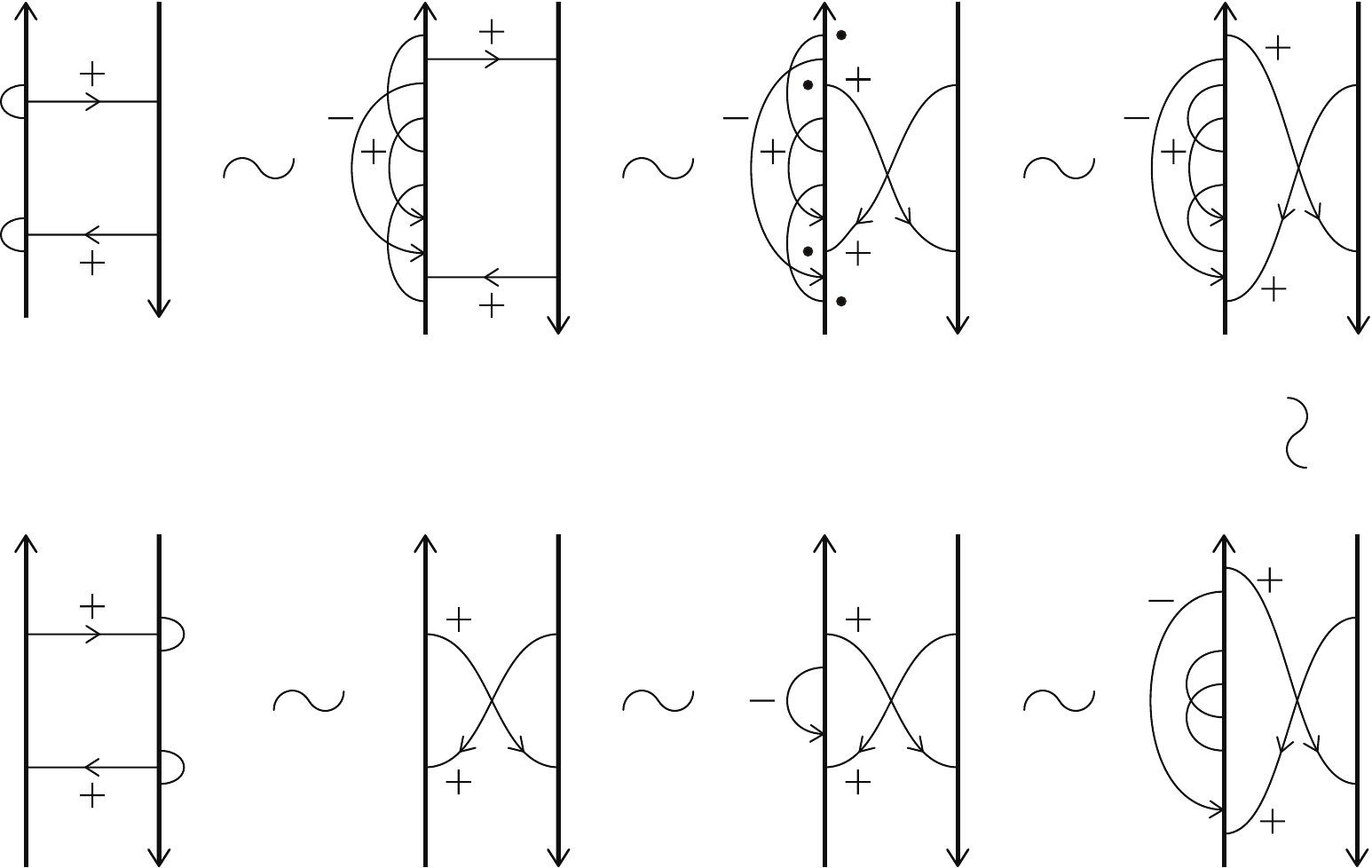}
      \put(59,180){R2}
      \put(158,180){R3}
      \put(261,180){$\Xi$}
      \put(253,161){twice}
      \put(58,48){Lem~\ref{lem-exchange}}
      \put(158,48){R1}
      \put(247,48){Lem~\ref{lem-sliding}}
      \put(280,105){Lem~\ref{lem-3shells}}
      \put(12,-15){$A_{2}B_{2}$}
      \put(12,117){$A_{3}B_{3}$}
    \end{overpic}
  \vspace{1em}
  \caption{Proof of Lemma~\ref{lem-C}}
  \label{pf-lem-C}
\end{figure}

\begin{proposition}\label{prop-nonself-chord}
For a $(1,2)$-ladder $W$ and a $(2,1)$-ladder $V$, 
the ladder $WV$ is $\Xi$-equivalent to either 
$$A_{1}^{a_{1}}A_{2}^{a_{2}}B_{1}^{b_{1}}B_{2}^{b_{2}} \text{ or } A_{1}^{a_{1}}A_{2}^{a_{2}}B_{1}^{b_{1}}B_{2}^{b_{2}}B_{3}$$
for some $a_{1},a_{2},b_{1},b_{2}\in\mathbb{Z}$ up to shell-pairs. 
\end{proposition}

\begin{proof}
By Lemmas~\ref{lem-nonself-chord} and \ref{lem-nonself-chord2}, 
we have 
\[
W\sim A_{1}^{a_{1}}A_{2}^{a_{2}}A_{3}^{a_{3}} \text{ and }
V\sim B_{1}^{b_{1}}B_{2}^{b_{2}}B_{3}^{b_{3}}
\] 
for some $a_{1},a_{2},b_{1},b_{2}\in\mathbb{Z}$ and $a_{3},b_{3}\in\{0,1\}$. 
For $a_{3}=0$, we have
$WV\sim 
A_{1}^{a_{1}}A_{2}^{a_{2}}B_{1}^{b_{1}}B_{2}^{b_{2}}B_{3}^{b_{3}}$ 
which gives a conclusion.

Now we consider the case $a_{3}=1$. 
It holds that $A_{3}\sim A_{2}B_{2}B_{3}^{-1}$ by Lemmas~\ref{lem-inverse2} and \ref{lem-C}. 
Since $B_{1}$, $B_{2}$, and $B_{3}$ commute mutually 
up to $\Xi$-equivalence by Lemma~\ref{lem-commutability2}, 
we have 
\[
\begin{split}
WV&\sim A_{1}^{a_{1}}A_{2}^{a_{2}}A_{3}B_{1}^{b_{1}}B_{2}^{b_{2}}B_{3}^{b_{3}} \\
&\sim A_{1}^{a_{1}}A_{2}^{a_{2}}(A_{2}B_{2}B_{3}^{-1})B_{1}^{b_{1}}B_{2}^{b_{2}}B_{3}^{b_{3}} \\
&\sim A_{1}^{a_{1}}A_{2}^{a_{2}+1}B_{1}^{b_{1}}B_{2}^{b_{2}+1}B_{3}^{b_{3}-1}.
\end{split}
\]
If $b_{3}=1$, then we have a conclusion. 
We consider the case $b_{3}=0$, that is, 
$WV\sim A_{1}^{a_{1}}A_{2}^{a_{2}+1}B_{1}^{b_{1}}B_{2}^{b_{2}+1}B_{3}^{-1}$. 
Since $B_{3}^{-1}\sim B_{2}^{-2}B_{3}$ holds 
by Lemma~\ref{lem-square2}, 
we have 
\[
\begin{split}
WV&\sim 
A_{1}^{a_{1}}A_{2}^{a_{2}+1}B_{1}^{b_{1}}B_{2}^{b_{2}+1}B_{3}^{-1} \\
&\sim
A_{1}^{a_{1}} A_{2}^{a_{2}+1}B_{1}^{b_{1}}B_{2}^{b_{2}+1}(B_{2}^{-2}B_{3}) \\
&\sim
A_{1}^{a_{1}} A_{2}^{a_{2}+1} B_{1}^{b_{1}} B_{2}^{b_{2}-1}B_{3}.
\end{split}
\]
\end{proof}

For integers $a_1,a_2,b_1,b_2,k,l\in\mathbb{Z}$, 
we denote by $$G(a_1,a_2, b_1,b_2;k,l) \text{ and }
H(a_1,a_2, b_1,b_2;k,l)$$ the Gauss diagrams 
as shown in Figure~\ref{rep-even}. 
More precisely, their ladders are 
$$A_1^{a_1}A_2^{a_2}B_1^{b_1}B_2^{b_2} \text{ and }
A_1^{a_1}A_2^{a_2}B_1^{b_1}B_2^{b_2}B_3,$$ respectively, 
and all nonself-chords in $A_1^{a_1}$ (resp. $A_2^{a_2}$, $B_1^{b_1}$, and $B_2^{b_2}$) have the sign $e(a_1)$ (resp. $e(a_2)$, $e(b_1)$, and $e(b_2)$). 
Furthermore, there are $|k|$ shell-pairs with sign $e(k)$ on $C_{1}$ 
and $|l|$ shell-pairs with sign $e(l)$ on $C_{2}$.
We denote by 
$$L(a_1,a_2, b_1,b_2;k,l) \text{ and } M(a_1,a_2, b_1,b_2;k,l)$$ 
the $2$-component virtual links 
presented by the Gauss diagrams 
$G(a_1,a_2, b_1,b_2;k,l)$ and $H(a_1,a_2, b_1,b_2;k,l)$, 
respectively.

\begin{figure}[htbp]
\centering
  \begin{overpic}[width=7.5cm]{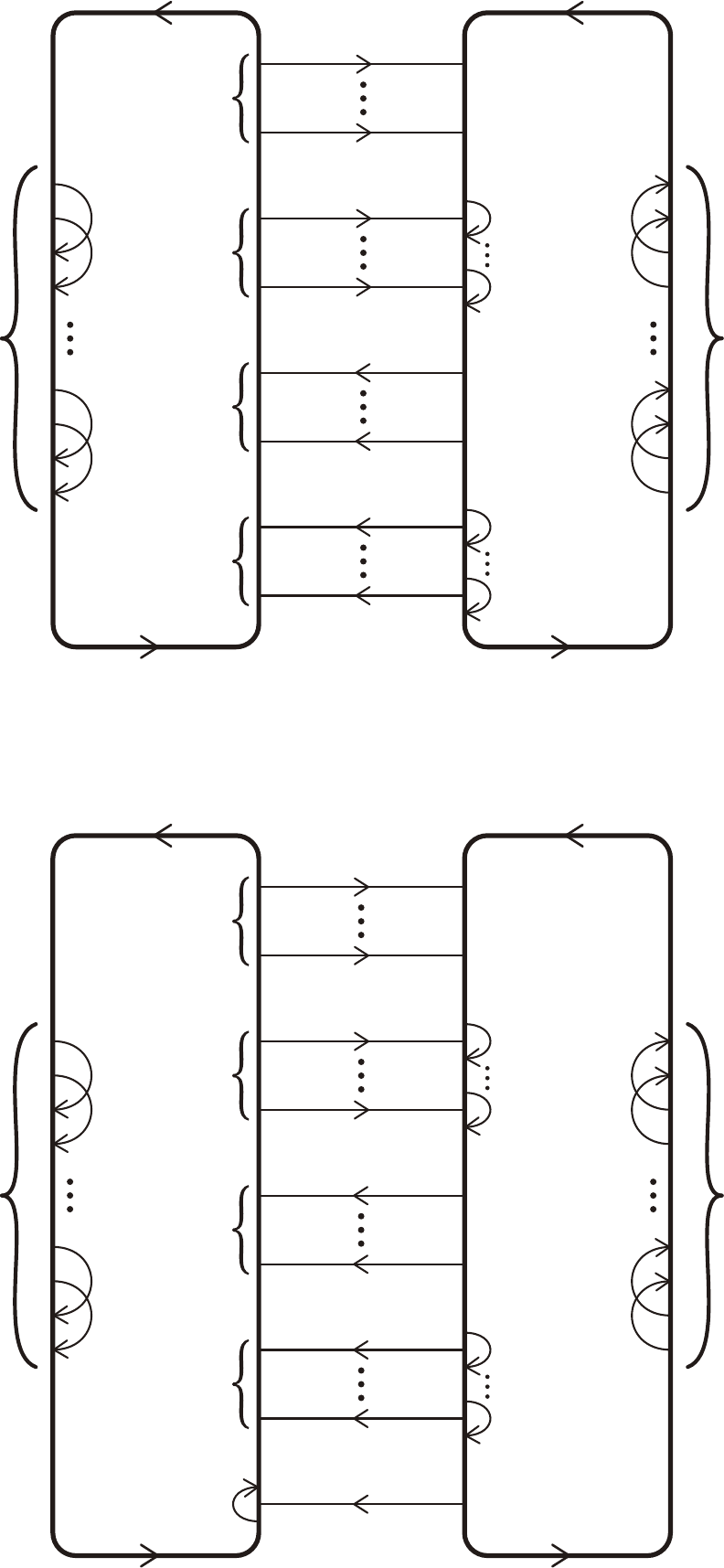}
    \put(65,254){$G(a_{1},a_{2},b_{1},b_{2};k,l)$}
    \put(65,-14){$H(a_{1},a_{2},b_{1},b_{2};k,l)$}
    \put(-1,431){$C_{1}$}
    \put(203,431){$C_{2}$}
    \put(117,448){$\gamma_{0}$}
    \put(50,431){$|a_{1}|$}
    \put(80,448){$e(a_{1})$}
    \put(80,428){$e(a_{1})$}
    \put(50,386){$|a_{2}|$}
    \put(80,402){$e(a_{2})$}
    \put(80,382){$e(a_{2})$}
    \put(147,396){$e(a_{2})$}
    \put(147,376){$e(a_{2})$}
    \put(51,340){$|b_{1}|$}
    \put(80,357){$e(b_{1})$}
    \put(80,337){$e(b_{1})$}
    \put(51,295){$|b_{2}|$}
    \put(80,311){$e(b_{2})$}
    \put(80,291){$e(b_{2})$}
    \put(147,303){$e(b_{2})$}
    \put(147,283){$e(b_{2})$}
    \put(-14,331){\rotatebox{90}{$|k|$ shell-pairs}}
    \put(29,399){$e(k)$}
    \put(29,381){$e(k)$}
    \put(29,338){$e(k)$}
    \put(29,321){$e(k)$}
    \put(218,391){\rotatebox{-90}{$|l|$ shell-pairs}}
    \put(175,410){$e(l)$}
    \put(175,372){$e(l)$}
    \put(175,348){$e(l)$}
    \put(175,311){$e(l)$}

    \put(-1,188){$C_{1}$}
    \put(203,188){$C_{2}$}
    \put(117,205){$\gamma_{0}$}
    \put(50,188){$|a_{1}|$}
    \put(80,205){$e(a_{1})$}
    \put(80,185){$e(a_{1})$}
    \put(50,143){$|a_{2}|$}
    \put(80,159){$e(a_{2})$}
    \put(80,139){$e(a_{2})$}
    \put(147,153){$e(a_{2})$}
    \put(147,133){$e(a_{2})$}
    \put(51,97){$|b_{1}|$}
    \put(80,114){$e(b_{1})$}
    \put(80,94){$e(b_{1})$}
    \put(51,52){$|b_{2}|$}
    \put(80,69){$e(b_{2})$}
    \put(80,49){$e(b_{2})$}
    \put(147,62){$e(b_{2})$}
    \put(147,42){$e(b_{2})$}
    \put(58,16){$+$}
    \put(87,22){$+$}
    \put(-14,79){\rotatebox{90}{$|k|$ shell-pairs}}
    \put(29,146){$e(k)$}
    \put(29,128){$e(k)$}
    \put(29,86){$e(k)$}
    \put(29,67){$e(k)$}
    \put(218,139){\rotatebox{-90}{$|l|$ shell-pairs}}
    \put(175,157){$e(l)$}
    \put(175,119){$e(l)$}
    \put(175,95.5){$e(l)$}
    \put(175,58){$e(l)$}
  \end{overpic}
  \vspace{1em}
  \caption{$G(a_{1},a_{2},b_{1},b_{2};k,l)$ and $H(a_{1},a_{2},b_{1},b_{2};k,l)$}
  \label{rep-even}
\end{figure}

\begin{proposition}\label{prop-even}
Any $2$-component virtual link $L$ is $\Xi$-equivalent to either 
$$L(a_1,a_2, b_1,b_2; k,l) \text{ or } M(a_1,a_2, b_1,b_2;k,l)$$
for some $a_1,a_2,b_1,b_2,k,l\in\mathbb{Z}$. 
\end{proposition}

\begin{proof} 
Any Gauss diagram of $L$ is 
$\Xi$-equivalent to a Gauss diagram $G_1$ 
which satisfies the conditions (i) and (ii) in Proposition~\ref{prop-selfchord}.

Let $W$ and $V$ be the $(1,2)$- and $(2,1)$-ladders of $G_1$, respectively. 
By Proposition~\ref{prop-nonself-chord}, 
the ladder $WV$ is $\Xi$-equivalent to 
either $$A_{1}^{a_{1}}A_{2}^{a_{2}}B_{1}^{b_{1}}B_{2}^{b_{2}}\text{ or }
A_{1}^{a_{1}}A_{2}^{a_{2}}B_{1}^{b_{1}}B_{2}^{b_{2}}B_{3}$$ 
with producing a finite number of shell-pairs 
for some $a_1,a_2,b_1,b_2\in\mathbb{Z}$. 
Let $G_2$ be the obtained Gauss diagram.

By Lemma~\ref{lem-sign}, 
$G_2$ is $\Xi$-equivalent to a Gauss diagram $G_3$ 
such that any shell at an endpoint of a nonself-chord 
labeled $A_2^{\e}$, $B_2^{\e}$, or $B_3$ 
has the same sign as that of the nonself-chord. 
We may produce a finite number of shell-pairs 
in this $\Xi$-equivalence.

If a circle $C_i$ $(i=1,2)$ of $G_{3}$ has a shell-pair consisting of 
positive and negative shells, 
then we delete it by an R2-move. 
Furthermore, if $C_i$ has a pair of positive and negative shell-pairs, 
then we cancel it by Lemmas~\ref{lem-sliding} and 
\ref{lem-canceling-pairs}. 
Let $G_4$ be the obtained Gauss diagram, 
where all shell-pairs in each $C_i$ $(i=1,2)$ have 
the same sign.

Recall that the orientation of a shell can be altered 
by a $\Xi$-move (without producing new shell-pairs). 
Therefore $G_4$ is $\Xi$-equivalent to a Gauss diagram $G_5$ 
such that the orientation of any shell on $C_i$ is coherent to 
that of $C_i$ $(i=1,2)$ as  in Figure~\ref{rep-even}. 
This Gauss diagram $G_5$ is coincident with 
$G(a_1,a_2, b_1,b_2; k,l)$ or $H(a_1,a_2, b_1,b_2; k,l)$ finally. 
\end{proof}

\section{Invariants of a $2$-component even virtual link}
\label{sec-even-inv}

Throughout Sections~\ref{sec-even-inv} and \ref{sec-even-proof}, 
we consider a $2$-component {\em even} virtual link $L=K_{1}\cup K_{2}$ 
and its Gauss diagram $G$. 
In this section, 
we introduce three kinds of invariants 
$J(L, K_{1})$, $J(L, K_{2})$, and $\overline{F}(L)$ of $L$, 
and establish a relationship among these invariants (Theorem~\ref{th-relation}). 

Let $\gamma$ be a self-chord on a circle $C_{i}$ of $G$. 
The endpoints of $\gamma$ divide $C_{i}$ into two arcs. 
Let $\alpha$ be one of the two arcs. 
We define the \textit{parity} of $\gamma$ to be 
the parity of the number of endpoints of self-/nonself-chords on $\alpha$. 
Since the number of nonself-chords in $G$ is even, 
the parity of $\gamma$ does not depend on a particular choice of the arc $\alpha$; 
in fact, 
the number of endpoints of self-/nonself-chords 
on $C_i$ $(i=1,2)$ is even. 
By definition, any shell is odd. 
Let $J(G,C_i)$ denote the sum of signs of all odd self-chords on $C_{i}$.

\begin{example}\label{ex-oddwrithe}
Consider the Gauss diagram $G=H(3,-2,1,1;-3,2)$ as shown in Figure~\ref{ex-inv}. 
Then the self-chords on $C_{1}$ consist of one positive shell and six negative shells, 
and those on $C_{2}$ consist of five positive shells and two negative shells. 
Therefore we have 
\[
J(G,C_{1})=-5 \text{ and } J(G,C_{2})=3. 
\] 
\end{example}

\begin{figure}[htbp]
\centering
    \begin{overpic}[width=4cm]{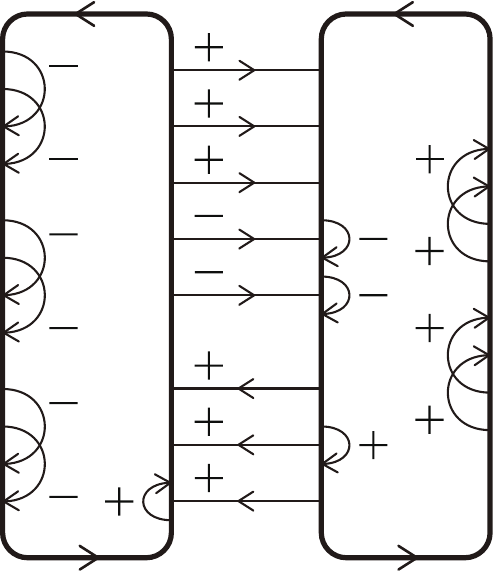}
      \put(61,120){$\gamma_{0}$}
      \put(-18,62){$C_{1}$}
      \put(119,62){$C_{2}$}
    \end{overpic}
  \caption{The Gauss diagram $H(3,-2,1,1;-3,2)$}
  \label{ex-inv}
\end{figure}

\begin{lemma}\label{lem-oddwrithe}
The integer $J(G,C_{i})$ is an invariant of the $2$-component even virtual link $L$ for $i=1,2$. 
Furthermore, it is invariant under $\Xi$-moves. 
\end{lemma}

\begin{proof}
An R1-move adds or removes an even self-chord, and preserves the parity of any other self-chords. 
An R2-move adds or deletes a pair of chords $\gamma$ and $\gamma'$ with opposite signs, 
and preserves the parity of any other self-chords. 
If $\gamma$ and $\gamma'$ are nonself-chords, then they do not contribute to $J(G,C_{i})$. 
If $\gamma$ and $\gamma'$ are self-chords on $C_{i}$, then they have the same parity and the contributions to $J(G,C_{i})$ cancel out. 
An R3-move or a $\Xi$-move preserves the sign and parity of any self-chords. 
\end{proof}

\begin{definition}\label{def-oddwrithe}
The integer $J(G,C_{i})$ is called 
the {\em odd writhe of the pair $(L,K_{i})$} $(i=1,2)$, 
and is denoted by $J(L,K_{i})$.
\end{definition}

For example, the virtual link 
$L=K_1\cup K_2=M(3,-2,1,1;-3,2)$ presented by the Gauss diagram $H(3,-2,1,1;-3,2)$ 
of Figure \ref{ex-inv} has the odd writhes  
$$J(L,K_1)=-5 \text{ and }J(L,K_2)=3.$$
We stress that the odd writhe $J(L,K_{i})$ of the pair $(L,K_{i})$ 
is different from the original odd writhe $J(K_i)$ 
of the virtual knot $K_i$ itself introduced in~\cite{K04}, 
meaning that $J(L,K_{i})$ is an invariant of $L$ rather than $K_i$.

\bigskip

Fix a nonself-chord $\gamma_0$ in $G$. 
For any other nonself-chord $\gamma$ in $G$, 
the endpoints of $\gamma_0$ and $\gamma$ on $C_{1}$ 
divide the circle $C_{1}$ into two arcs. 
Let $\alpha$ be one of the two arcs. 
Similarly, the endpoints of $\gamma_0$ and $\gamma$ on $C_{2}$ 
divide $C_{2}$ into two arcs, and let $\beta$ be one of the two arcs. 
See Figure~\ref{arcs}. 
We consider two sets of nonself-chords $\gamma$ in $G$ as follows; 
\[
S(\gamma_0)=\{\gamma\mid 
\text{the number of 
endpoints of chords on $\alpha\cup\beta$ is even}\}
\]
and 
\[
T(\gamma_0)=\{\gamma\mid 
\text{the number of 
endpoints of chords on $\alpha\cup\beta$ is odd}\}.
\]
As a convention, we set $\gamma_0\in S(\gamma_0)$. 
Since the number of nonself-chords in $G$ is even, 
the parity of the number of endpoints of chords on $\alpha\cup\beta$ 
does not depend on a particular choice of the arcs $\alpha$ and $\beta$. 
Therefore the sets $S(\gamma_0)$ and $T(\gamma_0)$ 
are well-defined for $\gamma_0$.

\begin{figure}[htbp]
\centering
    \begin{overpic}[width=3cm]{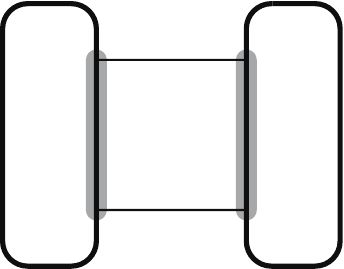}
      \put(12,31){$\alpha$}
      \put(67,31){$\beta$}
      \put(-15,31){$C_{1}$}
      \put(89,31){$C_{2}$}
      \put(40,57){$\gamma_{0}$}
      \put(40,20){$\gamma$}
    \end{overpic}
  \caption{A pair of arcs $\alpha$ and $\beta$ for nonself-chords $\gamma_{0}$ and $\gamma$}
  \label{arcs}
\end{figure}

Let $\sigma_{12}(G, \gamma_{0})$ and $\tau_{12}(G, \gamma_0)$ 
be the sums of signs of all nonself-chords of type~$(1,2)$ 
in $S(\gamma_0)$ and $T(\gamma_0)$, respectively. 
Similarly, 
let $\sigma_{21}(G, \gamma_{0})$ and $\tau_{21}(G, \gamma_0)$ 
be the sums of signs of all nonself-chords 
of type~$(2,1)$ in $S(\gamma_0)$ and $T(\gamma_0)$, respectively. 
By definition, we have 
\[
\sigma_{12}(G, \gamma_{0})+\tau_{12}(G, \gamma_{0})={\rm Lk}(K_{1},K_{2}) \text{ and } 
\sigma_{21}(G, \gamma_{0})+\tau_{21}(G, \gamma_{0})={\rm Lk}(K_{2},K_{1}). 
\]

We introduce an equivalence relation $\doteq$ 
among the elements in $\mathbb{Z}^4$ as follows. 
For two elements $(a_{1},a_{2},b_{1},b_{2})$ and $(c_{1},c_{2},d_{1},d_{2})\in\mathbb{Z}^{4}$,
we define 
$(a_1,a_2,b_1,b_2)\doteq(c_1,c_2,d_1,d_2)$ 
if and only if 
$$(a_1,a_2,b_1,b_2)=(c_1,c_2,d_1,d_2) \text{ or }
(c_2,c_1,d_2,d_1).$$
We denote by 
$[a_1,a_2,b_1,b_2]$ the equivalence class of 
$(a_1,a_2,b_1,b_2)$ under the equivalence relation 
$\doteq$. 
For a nonself-chord $\gamma_0$ in $G$, 
we put 
$$\overline{F}(G, \gamma_0)=
\bigl[\sigma_{12}(G, \gamma_0),\tau_{12}(G, \gamma_0), 
\sigma_{21}(G, \gamma_0),\tau_{21}(G, \gamma_0)\bigr]
\in\mathbb{Z}^4/\doteq.$$

\begin{example}
Consider the Gauss diagram $G=H (3, -2, 1, 1; -3, 2)$ 
given in Example~\ref{ex-oddwrithe}. 
The ladder of $G$ 
is expressed by $A_{1}^{3}A_{2}^{-2}B_{1}B_{2}B_{3}$. 
Let $\gamma_{0}$ be the top nonself-chord labeled $A_1$ 
as shown in Figure~\ref{ex-inv}. 
Then the set $S(\gamma_0)$ consists of nonself-chords labeled $A_1$ or $B_2$, 
and $T(\gamma_0)$ consists of 
those labeled $A_2^{-1}$, $B_2$, or $B_3$. 
Hence we have 
\[
\sigma_{12}(G, \gamma_{0})=3,\ 
\tau_{12}(G, \gamma_{0})=-2,\ 
\sigma_{21}(G, \gamma_{0})=1, \text{ and }
\tau_{21}(G, \gamma_{0})=2, 
\]
and it holds that 
$\overline{F}(G, \gamma_{0})=[3,-2,1,2]=[-2,3,2,1]$. 
\end{example}

\begin{lemma}\label{lem-linkingclass}
The equivalence class $\overline{F}(G, \gamma_{0})$ is 
an invariant of the $2$-component even virtual link $L$. 
Furthermore, it is invariant under $\Xi$-moves. 
\end{lemma}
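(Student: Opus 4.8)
The plan is to verify that $\overline{F}(L)$ is well-defined (invariant under R1--R3) and unchanged by $\Xi$-moves, by checking how each local move acts on the data $\bigl((\sigma_{12},\tau_{12}),(\sigma_{21},\tau_{21})\bigr)$ after a suitable choice of reference chord $\gamma_0$. The key technical point, which I would isolate as a preliminary observation, is that the equivalence relation among nonself-chords defined before Definition~\ref{def-linkingclass} is genuinely an equivalence relation on the set of nonself-chords of $G$: reflexivity is stated in the text, and symmetry is immediate; for transitivity one counts, for three nonself-chords $\gamma,\gamma',\gamma''$, the endpoints on the arcs $\alpha\cup\beta$ (for $\gamma,\gamma'$), on $\alpha'\cup\beta'$ (for $\gamma',\gamma''$), and on $\alpha''\cup\beta''$ (for $\gamma,\gamma''$); modulo $2$ the three parities sum to the parity of the total number of endpoints of $\gamma'$ lying on $C_1\cup C_2$, which is $2$ (even), so two even parities force the third. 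One must also record that whether $\gamma$ is equivalent to $\gamma'$ is unchanged if we slide any \emph{third} chord endpoint across one of the four endpoints of $\gamma$ or $\gamma'$, and that moving an endpoint of $\gamma'$ itself past a chord end changes $\alpha\cup\beta$ by exactly that one endpoint — these bookkeeping facts are what make the move-by-move analysis go through.

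Next I would choose, for a given Gauss diagram $G$, a reference nonself-chord $\gamma_0$ (any even virtual link has at least one nonself-chord; if $G$ has none at all then both $\sigma$'s and $\tau$'s vanish and there is nothing to check), and analyze each Reidemeister move R1, R2, R3 and each $\Xi$-move, always arranging that the move takes place away from $\gamma_0$ so that $\gamma_0$ persists and can be used as reference on both sides. An \textbf{R1-move} adds or removes a self-chord; it changes the number of endpoints on some arcs, but since a self-chord contributes $2$ endpoints to any arc that contains the R1 kink, or $0$ otherwise, it never changes the parity used to test equivalence of two nonself-chords, hence leaves every $\sigma_{ij},\tau_{ij}$ untouched. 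An \textbf{R2-move} adds or removes two parallel chords of opposite sign. If they are self-chords the same parity argument applies and nothing changes; if they are nonself-chords $\delta,\delta'$ of the same type $(i,j)$ with opposite signs, then $\delta$ and $\delta'$ are equivalent to each other (the arcs between their parallel strands carry no chord endpoints), so they are either both equivalent to $\gamma_0$ or both inequivalent, and in either case their signs cancel in $\sigma_{ij}$ or in $\tau_{ij}$; their presence does not affect the equivalence class of any other nonself-chord by the parity argument again. An \textbf{R3-move} permutes three chord endpoints on a triple of arcs but adds or deletes nothing; checking the few sign/orientation normal forms shows each of the three participating chords keeps its equivalence relationship with $\gamma_0$ (the counting changes on arcs by an even amount), so all four integers are preserved. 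Finally a \textbf{$\Xi$-move} swaps the first and last of three consecutive endpoints $P_1,P_2,P_3$; this moves $P_1$ across $P_2$ and across $P_3$, i.e.\ across two endpoints, hence changes any relevant arc-count by an even number, so again every equivalence class with respect to $\gamma_0$ is unchanged, and signs are obviously unchanged; thus $\sigma_{ij},\tau_{ij}$ are all invariant.

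The remaining point is the choice-independence packaged in the quotient $/\!\doteq$. Two Gauss diagrams of the same $L$, even after the above, may use different reference chords $\gamma_0$ and $\gamma_0'$. If $\gamma_0'$ is equivalent to $\gamma_0$, then the equivalence classes coincide and all four integers agree on the nose. If $\gamma_0'$ is \emph{not} equivalent to $\gamma_0$, then ``equivalent to $\gamma_0'$'' means exactly ``not equivalent to $\gamma_0$'' (here transitivity is used: there are exactly two equivalence classes among nonself-chords once a chord of each type exists, because any two chords of the same type $(i,j)$ that are inequivalent to each other, together with transitivity, cannot both be inequivalent to a fixed third one of that type — I would phrase this as: the set of nonself-chords has at most two equivalence classes), so the roles of $\sigma$ and $\tau$ are interchanged \emph{within each type simultaneously}, giving the second alternative in the definition of $\doteq$. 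Hence the class $[(\sigma_{12},\tau_{12}),(\sigma_{21},\tau_{21})]$ is independent of the choice of $\gamma_0$, and combined with the move-invariance above this shows $\overline{F}(L)$ is a well-defined invariant that is moreover preserved by $\Xi$-moves.

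I expect the main obstacle to be the ``at most two equivalence classes'' structural fact and its careful use: one must be sure that in a Gauss diagram of an even $2$-component link the equivalence relation on nonself-chords really has the clean two-class behavior (or exactly one class, when all nonself-chords of a given type are mutually equivalent), and that switching $\gamma_0$ between the two classes flips $\sigma\leftrightarrow\tau$ \emph{in both types at once} rather than in only one — this is precisely why the relation $\doteq$ is defined by the simultaneous swap $(p,q,r,s)\mapsto(q,p,s,r)$ and not by independent swaps in the two coordinates. The per-move parity computations themselves are routine once the bookkeeping lemma ``moving an endpoint past an even number of other endpoints preserves all equivalence relationships and all signs'' is in place.
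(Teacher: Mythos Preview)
Your proof is correct and follows essentially the same strategy as the paper: establish that the class is independent of the choice of reference chord $\gamma_0$ (via the at-most-two-equivalence-classes structure, which the paper leaves implicit), and check that each of R1--R3 and the $\Xi$-move preserves $(\sigma_{ij},\tau_{ij})$ once $\gamma_0$ is taken away from the move. The paper does these two steps in the opposite order and far more tersely, remarking that one may first perform an R2 to create a fresh reference chord if needed; your parenthetical claim that every even link has a nonself-chord is false (the trivial $2$-component link has none), but you immediately handle that case correctly anyway.
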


\begin{proof}
We first prove that 
$\overline{F}(G, \gamma_{0})$ does not depend on a particular choice of~$\gamma_{0}$. 
Consider a nonself-chord $\gamma_{1}$ in $G$. 
In the case $\gamma_{1}\in S(\gamma_0)$, we have
\[
\sigma_{ij}(G, \gamma_{0})=\sigma_{ij}(G, \gamma_{1})\ \text{and}\ 
\tau_{ij}(G, \gamma_{0})=\tau_{ij}(G, \gamma_{1}) 
\]
for $\{i,j\}=\{1,2\}$. 
In the case $\gamma_{1}\in T(\gamma_{0})$, we have
\[
\sigma_{ij}(G, \gamma_{0})=\tau_{ij}(G, \gamma_{1})\ \text{and}\ 
\tau_{ij}(G, \gamma_{0})=\sigma_{ij}(G, \gamma_{1}) 
\]
for $\{i,j\}=\{1,2\}$. 
Therefore $\overline{F}(G, \gamma_{0})=\overline{F}(G, \gamma_{1})$ holds.

We can also prove that $\overline{F}(G,\gamma_{0})$ is invariant under an R2-move
adding a pair of nonself-chords. 
In fact, it leaves $\sigma_{ij}(G, \gamma_{0})$ and $\tau_{ij}(G, \gamma_{0})$ unchanged for $\{i,j\}=\{1,2\}$.

Now we consider a Reidemeister move R1--R3 or $\Xi$-move on $G$ generally.
By using the above two properties if necessary, we may assume that such a move does not involve $\gamma_{0}$. 
Then it can be seen that this move preserves 
$\sigma_{ij}(G, \gamma_{0})$ and $\tau_{ij}(G, \gamma_{0})$  for $\{i,j\}=\{1,2\}$. 
\end{proof}

\begin{definition}\label{def-linkingclass}
The equivalence class $\overline{F}(G, \gamma_0)\in\mathbb{Z}^4/\doteq$ 
is called the {\em reduced linking class} of $L$, 
and is denoted by $\overline{F}(L)$. 
\end{definition}

\begin{remark}
There is an invariant of $L$ 
called the {\it linking class} $F(L)$ (cf.~\cite{CG, NNS}). 
We can prove that the reduced linking class $\overline{F}(L)$ 
is obtained from $F(L)$. 
We do not give the definition of $F(L)$ in this paper, 
and leave the proof to the reader. 
\end{remark}

Now we have three kinds of invariants 
$J(L,K_1)$, $J(L,K_2)$, and $\overline{F}(L)$. 
To study a relationship among these invariants, 
we prepare the following two lemmas. 
Recall that $L(a_1,a_2,b_1,b_2;k,l)$ and 
$M(a_1,a_2,b_1,b_2;k,l)$ are the $2$-component 
virtual links introduced at the end of Section~\ref{sec-standardform}. 

\begin{lemma}\label{lem-even}
Let $a_1,a_2,b_1,b_2,k,l\in \mathbb{Z}$.
We have the following. 
\begin{enumerate}
\item[(i)] 
$L(a_{1},a_{2},b_{1},b_{2};k,l)$ is even if and only if 
\[ 
a_{1}+a_{2}+b_{1}+b_{2}\equiv 0\pmod{2}. 
\] 

\item[(ii)] 
$M(a_{1},a_{2},b_{1},b_{2};k,l)$ is even if and only if 
\[ 
a_{1}+a_{2}+b_{1}+b_{2}\equiv 1\pmod{2}. 
\] 
\end{enumerate}
\end{lemma}

\begin{proof}
Recall that a $2$-component virtual link is even 
if and only if the number of nonself-chords in its Gauss diagram is even. 
Since the numbers of nonself-chords in 
 $G(a_1,a_2,b_1,b_2;k,l)$ and $H(a_1,a_2,b_1,b_2;k,l)$ 
 are equal to 
 \[
|a_{1}|+|a_{2}|+|b_{1}|+|b_{2}| \text{ and }
|a_{1}|+|a_{2}|+|b_{1}|+|b_{2}|+1,
\] 
respectively, we have the conclusion. 
\end{proof}

\begin{lemma}\label{lem-value}
We have the following. 
\begin{enumerate}
\item[(i)] 
If $L=K_{1}\cup K_{2}=L(a_{1},a_{2},b_{1},b_{2};k,l)$ is even, 
then it holds that 
\[
J(L,K_1)=2k,\ J(L,K_2)=a_{2}+b_{2}+2l, \text{ and } 
\overline{F}(L)=[a_{1},a_{2},b_{1},b_{2}]. 
\]

\item[(ii)] 
If $L=K_{1}\cup K_{2}=M(a_{1},a_{2},b_{1},b_{2};k,l)$ is even, 
then it holds that 
\[
J(L,K_1)=2k+1,\ J(L,K_2)=a_{2}+b_{2}+2l	, \text{ and } 
\overline{F}(L)=[a_{1},a_{2},b_{1},b_{2}+1].
\]
\end{enumerate}
\end{lemma}

\begin{proof}
Since the proofs of (i) and (ii) are similar, we only prove (ii). 

We consider the Gauss diagram $G=H(a_{1},a_{2},b_{1},b_{2};k,l)$. 
Since any self-chord on each circle $C_{i}$ $(i=1,2)$ of $G$ is odd, 
it contributes to $J(G,C_i)$.  
The sum of all self-chords on $C_{1}$ is equal to $2k+1$, and that on $C_{2}$ is equal to $a_{2}+b_{2}+2l$. 
Therefore we have 
\[
J(L,K_{1})=J(G,C_{1})=2k+1
\text{ and }
J(L,K_{2})=J(G, C_{2})=a_{2}+b_{2}+2l. 
\]

Recall that the ladder of $G$ is 
$A_{1}^{a_{1}}A_{2}^{a_{2}}B_{1}^{b_{1}}B_{2}^{b_{2}}B_{3}$. 
Let $\gamma_{0}$ be the top nonself-chord in $G$ labeled $A_{1}^{e(a_{1})}$ 
as shown in Figure~\ref{rep-even}. 
Then the set $S(\gamma_{0})$ consists of 
the nonself-chords labeled $A_{1}^{e(a_{1})}$ or $B_{1}^{e(b_{1})}$, and 
$T(\gamma_{0})$ consists of those labeled 
$A_{2}^{e(a_{2})}$, $B_{2}^{e(b_{2})}$, or $B_{3}$. 
Since the sum of signs of all nonself-chords labeled $A_{1}^{e(a_{1})}$ (resp. $B_{1}^{e(b_{1})}$) is equal to $a_{1}$ (resp. $b_{1}$), 
we have 
\[
\sigma_{12}(G, \gamma_{0})=a_{1} \text{ and } \sigma_{21}(G, \gamma_{0})=b_{1}.
\] 
Similarly, since the sum of signs of all nonself-chords labeled $A_{2}^{e(a_{2})}$ (resp. $B_{2}^{e(b_{2})}$ or $B_{3}$) is equal to $a_2$ (resp. $b_2+1$),
we have 
\[
\tau_{12}(G,\gamma_{0})=a_{2} \text{ and } \tau_{21}(G,\gamma_{0})=b_{2}+1.
\] 
Therefore $\overline{F}(L)
=\overline{F}(G,\gamma_{0})=[a_{1},a_{2},b_{1},b_{2}+1]$ holds.
\end{proof}

We establish a relationship among the invariants 
$J(L,K_1),J(L,K_2)$, and $\overline{F}(L)$ 
of a $2$-component even virtual link $L=K_{1}\cup K_{2}$ as follows. 

\begin{theorem}\label{th-relation} 
If $\overline{F}(L)=[a_{1},a_{2},b_{1},b_{2}]$, 
then it holds that 
\[
J(L, K_{1})+J(L, K_{2})\equiv a_{1}+b_{1}\equiv a_{2}+b_{2}\pmod{2}. 
\]
\end{theorem}

\begin{proof}
By Proposition~\ref{prop-even}, 
$L=K_{1}\cup K_{2}$ is $\Xi$-equivalent to a $2$-component even virtual link 
\[
L(c_{1},c_{2},d_{1},d_{2};k,l) \text{ or } M(c_{1},c_{2},d_{1},d_{2};k,l)
\]
for some $c_{1},c_{2},d_{1},d_{2},k,l\in\mathbb{Z}$. 
We only prove the result in the case where $L$ is $\Xi$-equivalent to $M(c_{1},c_{2},d_{1},d_{2};k,l)$. 
The other case is shown similarly.

By Lemmas~\ref{lem-linkingclass} and \ref{lem-value}(ii), 
we have 
\[
\overline{F}(L)=[c_{1},c_{2},d_{1},d_{2}+1]=[a_{1},a_{2},b_{1},b_{2}]. 
\]
Therefore it holds that 
\[
(c_{1},c_{2},d_{1},d_{2}+1)=(a_{1},a_{2},b_{1},b_{2}) \text{ or } 
(a_{2},a_{1},b_{2},b_{1}). 
\]

In the case $(c_{1},c_{2},d_{1},d_{2}+1)=(a_{1},a_{2},b_{1},b_{2})$, 
since $L$ is $\Xi$-equivalent to 
\[
M(c_1,c_2,d_1,d_2;k,l)=M(a_{1},a_{2},b_{1},b_{2}-1;k,l),\]
it holds that 
\[
J(L, K_{1})+J(L, K_{2})=(2k+1)+(a_{2}+b_{2}-1+2l)\equiv a_{2}+b_{2}\pmod{2}
\] 
by Lemma~\ref{lem-oddwrithe} and \ref{lem-value}(ii). 
Since $M(a_{1},a_{2},b_{1},b_{2}-1;k,l)$ is even, 
Lemma~\ref{lem-even}(ii) gives  
\[
a_{1}+b_{1}\equiv a_{2}+b_{2}\pmod{2}.
\] 
Therefore we have 
\[
J(L,K_1)+J(L,K_2)\equiv a_{2}+b_{2}\equiv a_{1}+b_{1}\pmod{2}. 
\]

In the case $(c_{1},c_{2},d_{1},d_{2}+1)=(a_{2},a_{1},b_{2},b_{1})$, 
$L$ is $\Xi$-equivalent to 
\[
M(c_1,c_2,d_1,d_2;k,l)=M(a_{2},a_{1},b_{2},b_{1}-1;k,l).\]
Similarly to the first case, we have 
\[
\begin{split}
J(L, K_{1})+J(L, K_{2})&=(2k+1)+(a_{1}+b_{1}-1+2l)\\
&\equiv a_{1}+b_{1}\equiv a_2+b_2 \pmod{2}.
\end{split}
\]
\end{proof}

The relationship in Theorem~\ref{th-relation} is considered as 
a necessary condition for integers to be the reduced linking class of 
a $2$-component even virtual link. 
It is also a sufficient condition as follows.

\begin{proposition}\label{prop-relation} 
For any integers $a_{1},a_{2},b_{1}$ and $b_{2}$ 
with $a_{1}+b_{1}\equiv a_{2}+b_{2}\pmod{2}$, 
there exists a $2$-component even virtual link $L=K_{1}\cup K_{2}$ such that 
  \begin{enumerate}
  \item[(i)] $\overline{F}(L)=[a_{1},a_{2},b_{1},b_{2}]$ and 
  \item[(ii)] $J(L,K_1)+J(L,K_2)\equiv a_{1}+b_{1}
  \equiv a_2+b_2\pmod{2}$. 
  \end{enumerate}
\end{proposition} 

\begin{proof}
Let $L=K_{1}\cup K_{2}$ be the $2$-component virtual link 
$L(a_{1},a_{2},b_{1},b_{2};k,l)$ for some $k,l\in\mathbb{Z}$. 
Then $L$ is even by Lemma~\ref{lem-even}(i). 
Furthermore, it holds that 
$$\overline{F}(L)=[a_1,a_2,b_1,b_2]
\text{ and }
J(L,K_1)+J(L,K_2)=a_2+b_2+2k+2l$$ 
by Lemma~\ref{lem-value}(i). 
Therefore $L$ satisfies (i) and (ii). 
\end{proof}

We remark that the $2$-component even virtual link 
$M(a_{1},a_{2},b_{1},b_{2}-1;k,l)$ 
also satisfies (i) and (ii) in Proposition~\ref{prop-relation}.

\section{Proof of Theorem~\ref{th-even}}\label{sec-even-proof}

The set of representatives of $2$-component virtual links 
under $\Xi$-equivalence 
given in Proposition~\ref{prop-even} 
is not complete; that is, there are $\Xi$-equivalent pairs of virtual links 
in the sets of $L(a_1,a_2,b_1,b_2;k,l)$'s and 
$M(a_1,a_2,b_1,b_2;k,l)$'s. 
For example, it is easily seen using the deformation (2) in Lemma~\ref{lem-exchange}, that $L(1,0,0,1;0,0)$ and $L(0,1,1,0;0,0)$ are $\Xi$-equivalent. 
Generally we have the following.

\begin{proposition}\label{prop-rep-even} 
We have the following. 
\begin{enumerate}
\item[(i)] 
If $L(a_{1},a_{2},b_{1},b_{2};k,l)$ is even, 
then it is $\Xi$-equivalent to 
\[
L\bigl(a_{2},a_{1},b_{2},b_{1};k,l+\tfrac{1}{2}(-a_{1}+a_{2}-b_{1}+b_{2})\bigr).
\]

\item[(ii)] 
If $M(a_{1},a_{2},b_{1},b_{2};k,l)$ is even, 
then 
it is $\Xi$-equivalent to 
\[
M\bigl(a_{2},a_{1},b_{2}+1,b_{1}-1;k,l+\tfrac{1}{2}(-a_{1}+a_{2}-b_{1}+b_{2}+1)\bigr).
\]
\end{enumerate}
\end{proposition} 

To prove this proposition, 
we prepare the following lemma. 

\begin{lemma}\label{lem-b4}
We have the $\Xi$-equivalence 
$B_4\sim B_1B_2^{-1}B_3$ 
up to shell-pairs. 
\end{lemma}

\begin{proof}
Since it holds that $B_{4}\sim B_{1}B_{2}B_{3}^{-1}$ 
by $B_{1}B_{2}\sim B_{4}B_{3}$ in Lemma~\ref{lem-commutability2}(i) 
and $B_{3}^{-1}\sim B_{2}^{-2}B_{3}$ by Lemma~\ref{lem-square2}, 
we have 
\[
B_{4}\sim B_{1}B_{2}B_{3}^{-1}\sim B_{1}B_{2}(B_{2}^{-2}B_{3})
\sim B_{1}B_{2}^{-1}B_{3}. 
\]
\end{proof}

\begin{proof}[Proof of {\rm Proposition~\ref{prop-rep-even}}] 
Since the proofs of (i) and (ii) are similar, we only prove (ii) 
by giving the $\Xi$-equivalence of the Gauss diagrams 
\[
H(a_{1},a_{2},b_{1},b_{2};k,l)\sim 
H\bigl(a_{2},a_{1},b_{2}+1,b_{1}-1;k,l+\tfrac{1}{2}(-a_{1}+a_{2}-b_{1}+b_{2}+1)\bigr).
\] 
We remark that $a_1+a_2+b_1+b_2$ is odd 
by Lemma~\ref{lem-even}(ii).

Put $G=H(a_{1},a_{2},b_{1},b_{2};k,l)$. 
Let $G_{1}$ be the Gauss diagram obtained from $G$ 
by adding a free chord $\gamma$ on $C_2$ 
at the top of the ladder of $G$ by an R1-move 
as shown in Figure~\ref{pf-prop-rep-even}(A). 
Let $P$ be the terminal endpoint of $\gamma$. 
We will slide $P$ along the vertical line of the ladder of $G_{1}$
from top to bottom as follows.

\begin{figure}[htbp]
\centering
\vspace{1em}
  \begin{overpic}[width=11cm]{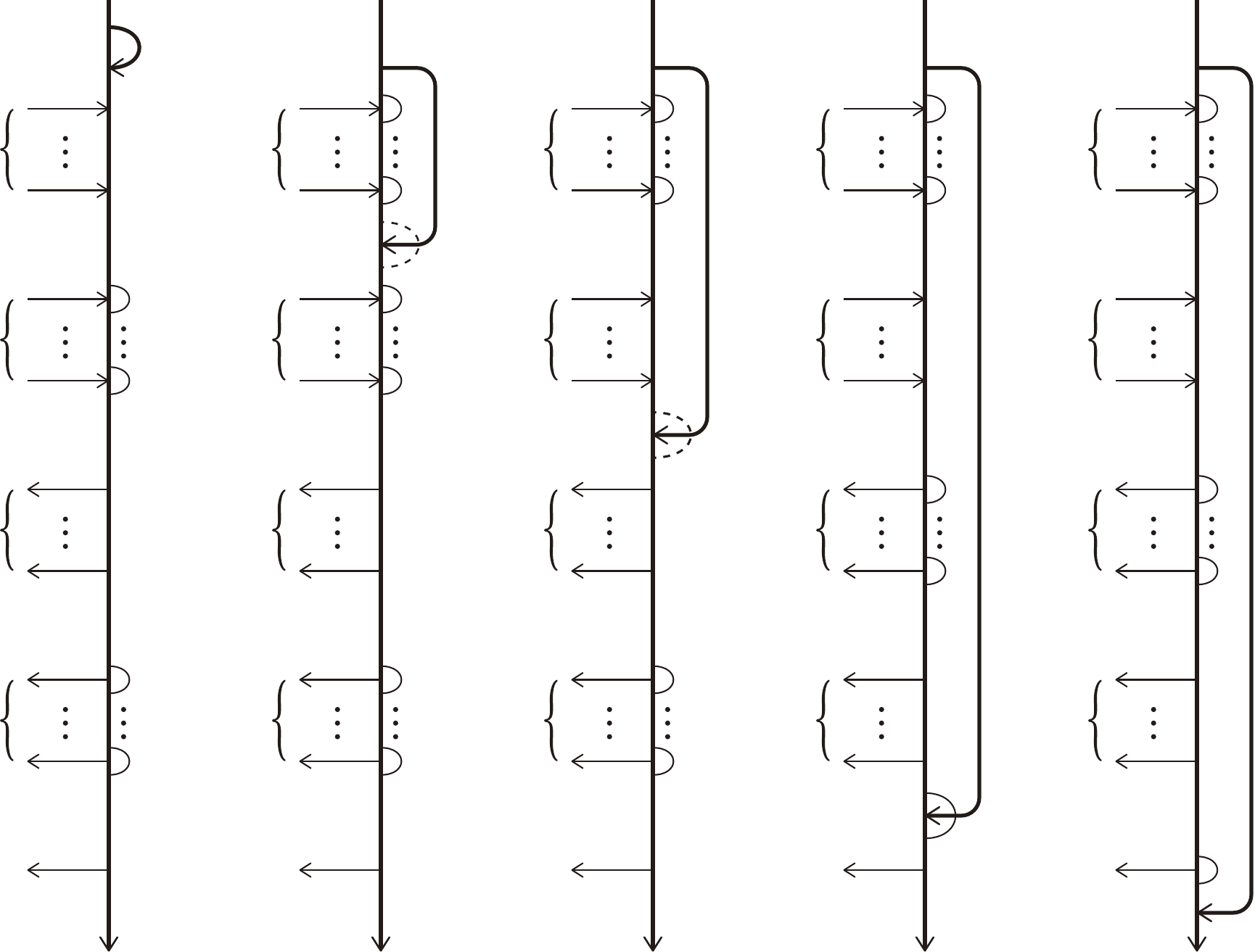}
      \put(9,-15){(A)}
      \put(77,-15){(B)}
      \put(145,-15){(C)}
      \put(213,-15){(D)}
      \put(280,-15){(E)}
      \put(21,242){$C_{2}$}
      \put(38,222){$\gamma$}
      \put(17,217){$P$}
      \put(-13,216){$A_{1}^{e(a_{1})}$}
      \put(-8,169){$A_{2}^{e(a_{2})}$}
      \put(-8,121){$B_{1}^{e(b_{1})}$}
      \put(-8,74){$B_{2}^{e(b_{2})}$}
      \put(-5,25.5){$B_{3}$}
      \put(-17,198){$|a_{1}|$}
      \put(-17,150){$|a_{2}|$}
      \put(-16,102){$|b_{1}|$}
      \put(-16,55){$|b_{2}|$}
      \put(90,242){$C_{2}$}
      \put(99.5,227){$\gamma$}
      \put(85,172.5){$P$}
      \put(60,216){$A_{2}^{e(a_{1})}$}
      \put(55,168){$A_{2}^{e(a_{2})}$}
      \put(60,121){$B_{1}^{e(b_{1})}$}
      \put(60,74){$B_{2}^{e(b_{2})}$}
      \put(63,25.5){$B_{3}$}
      \put(51,198){$|a_{1}|$}
      \put(51,150){$|a_{2}|$}
      \put(52,102){$|b_{1}|$}
      \put(52,55){$|b_{2}|$}
      \put(157,242){$C_{2}$}
      \put(166,226){$\gamma$}
      \put(153,125){$P$}
      \put(128,216){$A_{2}^{e(a_{1})}$}
      \put(128,168){$A_{1}^{e(a_{2})}$}
      \put(123,121){$B_{1}^{e(b_{1})}$}
      \put(128,74){$B_{2}^{e(b_{2})}$}
      \put(131,25.5){$B_{3}$}
      \put(119,198){$|a_{1}|$}
      \put(119,150){$|a_{2}|$}
      \put(120,102){$|b_{1}|$}
      \put(120,55){$|b_{2}|$}
      \put(225,242){$C_{2}$}
      \put(234,226){$\gamma$}
      \put(220.5,30){$P$}
      \put(196,216){$A_{2}^{e(a_{1})}$}
      \put(196,168){$A_{1}^{e(a_{2})}$}
      \put(196,121){$B_{2}^{e(b_{1})}$}
      \put(196,74){$B_{1}^{e(b_{2})}$}
      \put(199,25.5){$B_{3}$}
      \put(187,198){$|a_{1}|$}
      \put(187,150){$|a_{2}|$}
      \put(188,102){$|b_{1}|$}
      \put(188,55){$|b_{2}|$}
      \put(293,242){$C_{2}$}
      \put(302,226){$\gamma$}
      \put(288,6){$P$}
      \put(264,216){$A_{2}^{e(a_{1})}$}
      \put(264,168){$A_{1}^{e(a_{2})}$}
      \put(264,121){$B_{2}^{e(b_{1})}$}
      \put(264,74){$B_{1}^{e(b_{2})}$}
      \put(267,25.5){$B_{4}$}
      \put(255,198){$|a_{1}|$}
      \put(255,150){$|a_{2}|$}
      \put(256,102){$|b_{1}|$}
      \put(256,55){$|b_{2}|$}
  \end{overpic}
\vspace{1em}
\caption{Proof of Proposition~\ref{prop-rep-even}}
\label{pf-prop-rep-even}
\end{figure}

First we exchange the positions of $P$ and 
$|a_1|$ terminal endpoints of the nonself-chords labeled $A_1^{e(a_1)}$ 
by using the deformations (1) and (2) in Lemma~\ref{lem-exchange} alternately. 
The deformations are similar to those used in the proof of Proposition~\ref{prop-even}. 
As a result, the terminal endpoint of each nonself-chord labeled $A_1^{e(a_1)}$ 
gets a shell so that the label of the chord turns into $A_2^{e(a_1)}$. 
We remark that $P$ has (resp. does not have) a shell after this deformation 
if $a_1$ is odd (resp. even). 
See Figure~\ref{pf-prop-rep-even}(B), where the potential shell at $P$ depending on the parity of $a_1$ is indicated by a dashed arc.

Next we exchange the positions of $P$ and 
$|a_2|$ terminal endpoints of nonself-chords labeled $A_2^{e(a_2)}$ 
by using the deformations (2) and (3) in Lemma~\ref{lem-exchange} alternately. 
As a result, the terminal endpoint of each nonself-chord labeled 
$A_2^{e(a_2)}$ loses its shell, 
so that the label of the chord turns into 
$A_1^{e(a_2)}$. 
Furthermore $P$ has (resp. does not have) a shell 
if $a_1+a_2$ is odd (resp. even). 
See Figure~\ref{pf-prop-rep-even}(C).

Similarly, we exchange the positions of $P$ and 
$|b_1|$ initial endpoints of the nonself-chords 
labeled $B_1^{e(b_1)}$, 
and then the positions of $P$ and 
$|b_2|$ initial endpoints of the nonself-chords 
labeled $B_2^{e(b_2)}$. 
As a result, 
the label of each nonself-chord labeled $B_{1}^{e(b_{1})}$ (resp. $B_{2}^{e(b_{2})}$) 
turns into 
$B_2^{e(b_1)}$ (resp. $B_1^{e(b_2)}$). 
Since $a_1+a_2+b_1+b_2$ is odd, 
$P$ has a shell at this point. 
See Figure~\ref{pf-prop-rep-even}(D).

Finally we exchange the positions of $P$ and 
the initial endpoint of the nonself-chord labeled $B_3$ 
by the deformation (2) in Lemma~\ref{lem-exchange} 
so that the label of the chord turns into $B_4$. 
We remark that $P$ does not have a shell. 
See Figure~\ref{pf-prop-rep-even}(E). 

Since the positions of $P$ and any shell-pair on $C_2$ can be exchanged by Lemma~\ref{lem-sliding}, 
we move $P$ next to the initial endpoint of $\gamma$, 
and then remove $\gamma$ by an R1-move. 
Let $G_2$ be the obtained Gauss diagram 
with the ladder 
$A_{2}^{a_{1}}A_{1}^{a_{2}}B_{2}^{b_{1}}B_{1}^{b_{2}}B_{4}$. 
By Lemma~\ref{lem-b4}, followed by Lemmas \ref{lem-commutability}, \ref{lem-inverse2}, and \ref{lem-commutability2}, we have 
$$A_{2}^{a_{1}}A_{1}^{a_{2}}B_{2}^{b_{1}}B_{1}^{b_{2}}B_{4}
\sim
A_{2}^{a_{1}}A_{1}^{a_{2}}B_{2}^{b_{1}}B_{1}^{b_{2}}(B_1B_2^{-1}B_3)
\sim
A_{1}^{a_{2}}A_{2}^{a_{1}}B_{1}^{b_{2}+1}B_{2}^{b_{1}-1}B_{3} 
$$
up to shell-pairs on $C_2$. 
Therefore $G_2$ is $\Xi$-equivalent to 
$$G_3=H(a_2,a_1,b_2+1,b_1-1;k,l')$$ 
for some $l'\in\mathbb{Z}$.

By Lemma~\ref{lem-value}(ii), 
we have 
$$J(G,C_2)=a_2+b_2+2l\text{ and }
J(G_3,C_2)=a_1+(b_1-1)+2l'.$$
Since $G$ and $G_3$ are $\Xi$-equivalent, 
it follows by Lemma~\ref{lem-oddwrithe} that 
$$l'=l+\frac{1}{2}(-a_{1}+a_{2}-b_{1}+b_{2}+1).$$
\end{proof}

\begin{proof}[Proof of {\rm Theorem~\ref{th-even}}]
\underline{${\rm (i)}\Rightarrow {\rm (ii)}$.}~
This follows from Lemmas~\ref{lem-oddwrithe} and~\ref{lem-linkingclass} directly. 

\underline{${\rm (ii)}\Rightarrow {\rm (i)}$.}~ 
By Proposition~\ref{prop-even}, 
$L=K_{1}\cup K_{2}$ is $\Xi$-equivalent to either 
$$L(a_1,a_2, b_1,b_2; k,l) \text{ or } M(a_1,a_2, b_1,b_2;k,l)$$
for some $a_1,a_2,b_1,b_2,k,l\in\mathbb{Z}$. 
We only prove the result in the case where 
$L$ is $\Xi$-equivalent to $M(a_1,a_2, b_1,b_2; k,l)$. 
The other case is shown similarly.

Since the odd writhe $J(L,K_1)$ of the pair $(L,K_1)$ is 
odd by Lemma~\ref{lem-value}(ii), 
so is $J(L',K_1')$ by the assumption $J(L, K_{1})=J(L', K_{1}')$. 
Therefore $L'$ is $\Xi$-equivalent to 
$M(a'_1,a'_2, b'_1,b'_2;k',l')$
for some $a'_1,a'_2,b'_1,b'_2,k',l'\in\mathbb{Z}$ 
by Proposition~\ref{prop-even} and Lemma~\ref{lem-value}(ii). 
Then it holds that 
\[
2k+1=2k'+1, \ 
a_2+b_2+2l=a_2'+b_2'+2l', \text{ and } 
[a_1,a_2,b_1,b_2+1]=[a_1',a_2',b_1',b_2'+1]
\]
by assumption and Lemma~\ref{lem-value}(ii).

We have $k=k'$ by the first equation above, and 
\[
(a_{1},a_{2},b_{1},b_{2}+1)=(a'_{1},a'_{2},b'_{1},b'_{2}+1) \text{ or }
(a'_{2},a'_{1},b'_{2}+1,b'_{1}) 
\]
by the third equation. 
In the case $(a_{1},a_{2},b_{1},b_{2}+1)=(a'_{1},a'_{2},b'_{1},b'_{2}+1)$, 
we have $l=l'$ by the second equation 
to obtain 
$$M(a_{1},a_{2},b_{1},b_{2};k,l)=M(a'_{1},a'_{2},b'_{1},b'_{2};k',l').$$
Therefore $L$ is $\Xi$-equivalent to $L'$. 
In the case $(a_{1},a_{2},b_{1},b_{2}+1)=(a'_{2},a'_{1},b'_{2}+1,b'_{1})$, 
we have 
$l=l'+\tfrac{1}{2}(-a'_{1}+a'_{2}-b'_{1}+b'_{2}+1)$ 
by the second equation to obtain 
$$M(a_{1},a_{2},b_{1},b_{2};k,l)
=M\bigl(a'_{2},a'_{1},b'_{2}+1,b'_{1}-1;k',l'+\tfrac{1}{2}(-a'_{1}+a'_{2}-b'_{1}+b'_{2}+1)\bigr).$$
Since this link is $\Xi$-equivalent to 
$M(a_1',a_2',b_1',b_2;k',l')$ by Proposition~\ref{prop-rep-even}(ii), 
$L$ is $\Xi$-equivalent to $L'$. 
\end{proof}

We consider the following subsets of 
$2$-component even virtual links; 
\[
\begin{split}
X_1&=\{L(a_1,a_2,b_1,b_2;k,l)\mid 
a_1+a_2+b_1+b_2\equiv 0 \ ({\rm mod}~2) \ {\rm and} \ a_1<a_2 \}, \\
X_2&=
\{L(a_1,a_1,b_1,b_2;k,l)\mid 
b_1+b_2\equiv 0 \ ({\rm mod}~2) \ {\rm and} \ b_1\leq b_2\}, \\
Y_1&=
\{M(a_1,a_2,b_1,b_2;k,l)\mid 
a_1+a_2+b_1+b_2\equiv 1 \ ({\rm mod}~2) \ {\rm and} \ a_1<a_2 \}, \mbox{ and}\\
Y_2&=
\{M(a_1,a_1,b_1,b_2;k,l)\mid 
b_1+b_2\equiv 1 \ ({\rm mod}~2) \ {\rm and} \ b_1\leq b_2+1\}. 
\end{split}\]

\begin{corollary}
The sets $X_1$, $X_2$, $Y_1$ and $Y_2$ 
satisfy the following. 
\begin{enumerate}
\item[(i)] 
The sets $X_1$, $X_2$, $Y_1$, and $Y_2$ are 
mutually disjoint. 
\item[(ii)]
There is no distinct pair of $2$-component even virtual links 
in $X_1$, $X_2$, 
$Y_1$, or $Y_2$ which are $\Xi$-equivalent. 
\item[(iii)] 
The disjoint union $X_1\sqcup X_2\sqcup Y_1\sqcup Y_2$ 
is a complete representative system 
of the $\Xi$-equivalence classes of 
$2$-component even virtual links. 
\end{enumerate}
\end{corollary} 

\begin{proof}
The odd writhe $J(L,K_1)$ in Lemma~\ref{lem-value} 
induces that $X_1\cup X_2$ and $Y_1\cup Y_2$ are disjoint. 
Furthermore, the reduced linking class $\overline{F}(L)$ in the lemma induces that 
 $X_1\cap X_2=\emptyset$ and $Y_1\cap Y_2=\emptyset$. 

(ii) 
Assume that two virtual links 
$L=K_{1}\cup K_{2}=L(a_1,a_2,b_1,b_2;k,l)$ and $L'=K'_{1}\cup K'_{2}=L(a_1',a_2',b_1',b_2';k',l')$ in $X_1$ 
are $\Xi$-equivalent. 
It follows from $a_1<a_2$, $a_1'<a_2'$, and $\overline{F}(L)=\overline{F}(L')$ that 
\[a_1=a_1', \ a_2=a_2', \ b_1=b_1', \text{ and }
b_2=b_2'.\] 
Then we have  $k=k'$ and $l=l'$ by $J(L,K_1)=J(L',K'_{1})$ and $J(L,K_2)=J(L',K'_{2})$. 
Since $L$ and $L'$ coincide, 
there is no distinct pair of $2$-component even virtual links in $X_1$ 
which are $\Xi$-equivalent.

Assume that two virtual links 
$L=K_{1}\cup K_{2}=L(a_1,a_1,b_1,b_2;k,l)$ and $L'=K'_{1}\cup K'_{2}=L(a_1',a_1',b_1',b_2';k',l')$ in $X_2$ 
are $\Xi$-equivalent. 
Since $b_1\leq b_2$, $b_1'\leq b_2'$, and $\overline{F}(L)=\overline{F}(L')$ hold, 
we have $b_1=b_1'$ and $b_2=b_2'$. 
Then we have $k=k'$ and $l=l'$ by $J(L,K_1)=J(L',K'_{1})$ and $J(L,K_2)=J(L',K'_{2})$. 
Therefore there is no distinct pair of $2$-component even virtual links in $X_2$ 
which are $\Xi$-equivalent. 
Similarly we can prove that this is the case 
for $Y_1$ and $Y_2$. 

(iii) 
We will prove that any $2$-component even virtual link $L$ is $\Xi$-equivalent to some virtual link 
belonging to $X_1\sqcup X_2\sqcup Y_1\sqcup Y_2$. 
By Proposition~\ref{prop-even}, $L$ can be written in the form 
\[
L(a_1,a_2,b_1,b_2;k,l) \text{ or } M(a_1,a_2,b_1,b_2;k,l)
\]
for some $a_{1},a_{2},b_{1},b_{2},k,l\in\mathbb{Z}$. 

In the case $L=L(a_1,a_2,b_1,b_2;k,l)$,  
it follows from Lemma~\ref{lem-even}(i) that 
\[a_1+a_2+b_1+b_2\equiv 0 \pmod{2}.\] 
By Proposition~\ref{prop-rep-even}(i), 
we may assume that $L$ satisfies $a_1\leq a_2$. 
For $a_1<a_2$, we have $L\in X_1$. 
For $a_1=a_2$, we have $b_1+b_2\equiv 0\pmod{2}$. 
Furthermore, we may assume that $b_1\leq b_2$ 
by Proposition~\ref{prop-rep-even}(i). 
Then it holds that $L\in X_2$. 

In the case $L=M(a_1,a_2,b_1,b_2;k,l)$, 
we can similarly prove that 
$L$ is $\Xi$-equivalent to some virtual link 
belonging to $Y_1$ or $Y_2$. 
\end{proof}


\end{document}